\theoremstyle{plain}
\newtheorem{theorem}{Theorem}
\newtheorem{lemma}[theorem]{Lemma}
\newtheorem{corollary}[theorem]{Corollary}
\newtheorem{proposition}[theorem]{Proposition}
\theoremstyle{definition}
\newtheorem{definition}[theorem]{Definition}
\newtheorem{example}[theorem]{Example}
\newtheorem{question}[theorem]{Question}
\theoremstyle{remark}
\newtheorem{remark}[theorem]{Remark}
\newtheorem{claim}[theorem]{Claim}
\newtheorem*{acknowledgments}{Acknowledgments}
\tikzset{normal/.style={circle,fill=#1,inner sep=1pt}, special/.style 2 args={circle,fill=#1,draw=#2,inner sep=1.5pt,line width=0.5pt}}
\DeclareMathOperator{\disc}{disc}
\DeclareMathOperator{\ord}{ord}
\DeclareMathOperator{\res}{res}
\begin{document}

\title{Unicritical polynomial maps with rational multipliers}

\author{Valentin Huguin}
\address{Institut de Mathématiques de Toulouse, UMR 5219, Université de Toulouse, CNRS, UPS, F-31062 Toulouse Cedex 9, France}
\email{valentin.huguin@math.univ-toulouse.fr}

\subjclass[2010]{Primary 37P05, 37P35; Secondary 37F10, 37F45}

\begin{abstract}
In this article, we prove that every unicritical polynomial map that has only rational multipliers is either a power map or a Chebyshev map. This provides some evidence in support of a conjecture by Milnor concerning rational maps whose multipliers are all integers.
\end{abstract}

\maketitle

\section{Introduction}

Given a polynomial map $f \colon \mathbb{C} \rightarrow \mathbb{C}$ and a point $z_{0} \in \mathbb{C}$, we study the sequence $\left( f^{\circ n}\left( z_{0} \right) \right)_{n \geq 0}$ of iterates of $f$ at $z_{0}$. The set $\left\lbrace f^{\circ n}\left( z_{0} \right) : n \geq 0 \right\rbrace$ is called the \emph{forward orbit} of $z_{0}$ under $f$.

The point $z_{0}$ is said to be \emph{periodic} for $f$ if there exists an integer $n \geq 1$ such that $f^{\circ n}\left( z_{0} \right) = z_{0}$; the least such integer $n$ is called the \emph{period} of $z_{0}$. The forward orbit of $z_{0}$, which has cardinality $n$, is said to be a \emph{cycle} for $f$. The \emph{multiplier} of $f$ at $z_{0}$ is the derivative of $f^{\circ n}$ at $z_{0}$; equivalently, it is the product of the derivatives of $f$ along the cycle. In particular, $f$ has the same multiplier at each point of the cycle.

The multiplier is invariant under conjugacy: if $f$ and $g$ are two polynomial maps, $\phi$ is an invertible affine map such that $\phi \circ f = g \circ \phi$ and $z_{0}$ is a periodic point for $f$, then $\phi\left( z_{0} \right)$ is a periodic point for $g$ with the same period and the same multiplier.

In this paper, we wish to examine the polynomial maps that have only integer~-- or rational~-- multipliers.

\begin{definition}
A polynomial map $f \colon \mathbb{C} \rightarrow \mathbb{C}$ of degree $d \geq 2$ is said to be a \emph{power map} if it is affinely conjugate to $z \mapsto z^{d}$.
\end{definition}

For every $d \geq 2$, there exists a unique polynomial $T_{d} \in \mathbb{C}[z]$ such that \[ T_{d}\left( z +z^{-1} \right) = z^{d} +z^{-d} \, \text{.} \] The polynomial $T_{d}$ is monic of degree $d$ and is called the $d$th \emph{Chebyshev polynomial}.

\begin{example}
We have $T_{2}(z) = z^{2} -2$ and $T_{3}(z) = z^{3} -3 z$.
\end{example}

\begin{definition}
A polynomial map $f \colon \mathbb{C} \rightarrow \mathbb{C}$ of degree $d \geq 2$ is said to be a \emph{Chebyshev map} if it is affinely conjugate to $\pm T_{d}$.
\end{definition}

\begin{remark}
For every $d \geq 2$, the polynomials $-T_{d}$ and $T_{d}$ are affinely conjugate if and only if $d$ is even.
\end{remark}

These conjugacy classes of polynomials share the following well-known property:

\begin{proposition}
\label{proposition:special}
Suppose that $f \colon \mathbb{C} \rightarrow \mathbb{C}$ is a power map or a Chebyshev map. Then $f$ has only integer multipliers.
\end{proposition}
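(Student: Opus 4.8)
The plan is to use the conjugacy-invariance of multipliers recalled above: it reduces the statement to the model maps $f(z) = z^{d}$ and $f = \pm T_{d}$, which I would treat in turn.

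For $f(z) = z^{d}$ this is immediate. Here $f^{\circ n}(z) = z^{d^{n}}$, so a point $z_{0}$ of period $n$ is either $0$ (with $n = 1$) or a root of $z^{d^{n} - 1} = 1$, and the multiplier $\left( f^{\circ n} \right)'\!\left( z_{0} \right) = d^{n} z_{0}^{d^{n} - 1}$ is accordingly $0$ or $d^{n}$ -- in both cases an integer.

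The substance is in the Chebyshev case, and there the idea is to push everything through the semiconjugacy coming from the definition of $T_{d}$. Set $\pi(w) = w + w^{-1}$, a two-to-one map $\mathbb{C}^{*} \rightarrow \mathbb{C}$ with $\pi\left( w_{1} \right) = \pi\left( w_{2} \right)$ precisely when $w_{2} \in \left\lbrace w_{1}, w_{1}^{-1} \right\rbrace$, whose only ramified values are $2$ and $-2$ (images of $w = 1$ and $w = -1$). The definition of $T_{d}$ says $T_{d} \circ \pi = \pi \circ q$ with $q(w) = w^{d}$, and a one-line computation gives $\left( - T_{d} \right) \circ \pi = \pi \circ q$ with $q(w) = - w^{d}$; so in both cases $f = \epsilon T_{d}$ satisfies $f \circ \pi = \pi \circ q$ for a map $q$ of the form $w \mapsto \epsilon w^{d}$, $\epsilon \in \left\lbrace 1, -1 \right\rbrace$. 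Given a periodic point $z_{0}$ of period $n$ for $f$, I would (i) lift it through $\pi$ to some $w_{0}$ and note that $\pi\!\left( q^{\circ n}\left( w_{0} \right) \right) = f^{\circ n}\left( z_{0} \right) = z_{0} = \pi\left( w_{0} \right)$ forces $q^{\circ n}\left( w_{0} \right) \in \left\lbrace w_{0}, w_{0}^{-1} \right\rbrace$, so that $w_{0}$ is a root of unity; (ii) differentiate $f \circ \pi = \pi \circ q$ to get $f'\!\left( \pi(w) \right) \pi'(w) = \pi'\!\left( q(w) \right) q'(w)$, with $\pi'(w) = 1 - w^{-2}$ and $q'(w) = \epsilon d\, w^{d - 1}$; and (iii) multiply this last identity over the cycle $z_{0}, f\left( z_{0} \right), \dots, f^{\circ (n - 1)}\left( z_{0} \right)$, so that the $\pi'$-factors telescope and leave $\left( f^{\circ n} \right)'\!\left( z_{0} \right) = \pm d^{n}\, w_{0}^{\, d^{n} - 1}\, \pi'\!\left( q^{\circ n}\left( w_{0} \right) \right) / \pi'\!\left( w_{0} \right)$. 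Feeding in the two possibilities for $q^{\circ n}\left( w_{0} \right)$, together with the identity $\pi'\!\left( w^{-1} \right) = - w^{2} \pi'(w)$ and the resulting fact that $w_{0}^{\, d^{n} - 1}$ (resp. $w_{0}^{\, d^{n} + 1}$) equals $\pm 1$, collapses this expression to $\pm d^{n}$.

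Two caveats, both minor. The telescoping in (iii) is only legitimate when $\pi'$ does not vanish along the cycle, i.e. when no cycle point equals $\pm 2$; but the only cycles of $\pm T_{d}$ consisting of such points are $\left\lbrace 2 \right\rbrace$, $\left\lbrace -2 \right\rbrace$, and $\left\lbrace 2, -2 \right\rbrace$ (the last for $- T_{d}$ with $d$ odd), and on these the multiplier is computed directly -- for instance from $T_{d}(2 \cos\theta) = 2 \cos(d\theta)$, which gives $T_{d}'(\pm 2) = \pm d^{2}$ -- and is again a power of $d$ up to sign. Also, iterating $q(w) = \epsilon w^{d}$ introduces extra signs $\epsilon^{k}$ into the product of step (iii); keeping track of them is the only genuinely fiddly part of the argument, but since each is merely $\pm 1$ they do not affect integrality -- they only account for the difference between the multipliers of $T_{d}$ and those of $- T_{d}$.
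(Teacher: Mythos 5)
Your proof is correct. The paper states Proposition~\ref{proposition:special} as a well-known fact and gives no argument of its own, so there is nothing to compare against; the argument you give is the standard one. The power-map case is immediate as you say. For $f = \pm T_{d}$ you correctly exploit the semiconjugacy $f \circ \pi = \pi \circ q$ with $\pi(w) = w + w^{-1}$ and $q(w) = \pm w^{d}$: a period-$n$ point $z_{0}$ away from $\pm 2$ lifts to a root of unity $w_{0}$ with $q^{\circ n}\left( w_{0} \right) \in \left\lbrace w_{0}, w_{0}^{-1} \right\rbrace$, and differentiating the semiconjugacy around the cycle and telescoping the $\pi'$-factors gives
\[
\left( f^{\circ n} \right)'\!\left( z_{0} \right) = \frac{\pi'\!\left( q^{\circ n}\left( w_{0} \right) \right)}{\pi'\!\left( w_{0} \right)}\,\left( q^{\circ n} \right)'\!\left( w_{0} \right) = \pm d^{n},
\]
once one feeds in $\pi'\!\left( w^{-1} \right) = -w^{2}\pi'(w)$ together with $w_{0}^{d^{n} - 1} = \pm 1$ or $w_{0}^{d^{n} + 1} = \pm 1$. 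The exceptional cycles meeting the critical values $\pm 2$ of $\pi$, where $\pi'$ vanishes and the telescoping is not available, are correctly listed as $\lbrace 2 \rbrace$, $\lbrace -2 \rbrace$ and (for $-T_{d}$ with $d$ odd) $\lbrace 2, -2 \rbrace$ --- once the orbit enters $\lbrace 2, -2 \rbrace$ it stays there --- and the direct computation there yields $\pm d^{2}$ at the fixed points and $d^{4}$ on the $2$-cycle, again integers. One cosmetic slip: the exact value is $T_{d}'(-2) = (-1)^{d+1}d^{2}$, not $-d^{2}$ (so e.g.\ $T_{3}'(-2) = 9$), but since it is $\pm d^{2}$ in every case this does not affect your conclusion.
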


\subsection{Statement of the results}

We are interested in the converse of Proposition~\ref{proposition:special}. More precisely, we wish to show that every polynomial map that has only integer~-- or rational~-- multipliers is either a power map or a Chebyshev map.

We restrict ourselves to \emph{unicritical} polynomial maps~-- that is, polynomial maps of degree $d \geq 2$ that have a unique critical point in the complex plane.

\begin{theorem}
\label{theorem:unicritical}
Assume that $f \colon \mathbb{C} \rightarrow \mathbb{C}$ is a unicritical polynomial map that has only rational multipliers. Then $f$ is either a power map or a Chebyshev map.
\end{theorem}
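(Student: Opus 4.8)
The plan is to normalize, reduce to a one-parameter problem, and combine a period-$1$ computation with an arithmetic finiteness argument. Every unicritical polynomial map of degree $d \geq 2$ is affinely conjugate to $f_{c}(z) = z^{d} + c$ for some $c \in \mathbb{C}$, and $f_{c}$ and $f_{c'}$ are affinely conjugate if and only if $c'/c$ is a $(d-1)$th root of unity, i.e.\ if and only if $c^{d-1} = c'^{d-1}$. Since multipliers are conjugacy invariants, Theorem~\ref{theorem:unicritical} becomes: if $f_{c}$ has only rational multipliers, then $c^{d-1} = 0$, or else $d = 2$ and $c = -2$. (For $d \geq 3$ the polynomials $T_{d}$ and $-T_{d}$ have $d - 1$ distinct critical points, so no unicritical map of degree $\geq 3$ is a Chebyshev map, and in that range the assertion is simply that $f_{c}$ is a power map.)

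First I would analyze the fixed points. The finite fixed points $z_{1}, \dots, z_{d}$ of $f_{c}$ are the roots of $z^{d} - z + c$, with multipliers $\lambda_{i} = d z_{i}^{d-1}$; a resultant computation gives $\prod_{i = 1}^{d} (t - \lambda_{i}) = t(t - d)^{d - 1} + (-1)^{d} d^{d} c^{d - 1}$. In particular $\prod_{i = 1}^{d} \lambda_{i} = d^{d} c^{d - 1}$, so if all multipliers of $f_{c}$ are rational then $w := c^{d - 1} \in \mathbb{Q}$, and if $w = 0$ we are done. Assume henceforth $w \in \mathbb{Q}^{\times}$. By conjugacy invariance the whole multiplier spectrum of $f_{c}$ depends only on $w$, and — using dynatomic polynomials together with resultants — for each $n \geq 1$ one produces a polynomial $M_{n}(t, w) \in \mathbb{Z}[w][t]$, monic in $t$, whose roots are exactly the multipliers of the period-$n$ cycles of $f_{c}$. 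The hypothesis is then equivalent to: $M_{n}(\, \cdot \,, w)$ splits into linear factors over $\mathbb{Q}$ for every $n \geq 1$.

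Next I would pin $w$ down to a finite, explicit set. The idea is to bound $w$ at every place. At the archimedean place: if $f_{c}$ has only real multipliers then $w$ must lie in a bounded interval of $\mathbb{R}$, because for $w$ outside a bounded region the dynamics creates a periodic cycle with a non-real multiplier (a cycle appearing through a non-real bifurcation of $f_{c}$). At the finite places: since the $M_{n}$ are monic over $\mathbb{Z}[w]$, a Newton-polygon analysis of the dynatomic polynomials and of the $M_{n}$ forces the denominator of $w$ to be tightly constrained (and, together with monicity, upgrades rationality of all multipliers to integrality). These bounds leave only finitely many candidate values of $w$; for each nonzero candidate I would exhibit a small $n$ for which $M_{n}(\, \cdot \,, w)$ fails to split over $\mathbb{Q}$ — except, when $d = 2$, for $w = c = -2$, where $f_{-2} = T_{2}$ is recognized as a Chebyshev map. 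This yields $w = 0$, or $d = 2$ and $c = -2$, which is the theorem.

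The reductions in the first two paragraphs and the final finite case-check are routine manipulations with dynatomic polynomials and resultants; the substance — and the main obstacle — is the boundedness step. Concretely, the hard part is the archimedean bound: showing that the parameters $w$ for which $f_{c}$ has only real multipliers form a bounded set, which requires genuine dynamical input about where and how repelling cycles are created as $c$ varies. A secondary difficulty is making both the archimedean and the non-archimedean bounds explicit enough that the resulting list of candidate values of $w$ can actually be enumerated and eliminated, rather than merely known to be finite.
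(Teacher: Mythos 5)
Your reduction to the family $f_c(z) = z^d + c$ and the observation that the multiplier data depends only on $w = c^{d-1}$ are correct and match the paper's setup, as does the use of resultants and dynatomic polynomials to build the multiplier polynomials $M_n$. But the central mechanism you propose for finishing — bound $w$ at the archimedean place, bound denominators at the finite places, then check a finite list — fails at exactly the delicate case $d = 2$. The archimedean bound is false there: by Eremenko and van~Strien's theorem (and as noted in the paper after Proposition~\ref{proposition:quadratic}), the quadratic map $f_c$ has only real multipliers if and only if $c \in (-\infty, -2] \cup \lbrace 0 \rbrace$, an unbounded set. So requiring all multipliers to be real does not confine $c$ to a bounded interval, and no amount of Newton-polygon control at finite places will leave only finitely many candidates — the admissible parameter set is a ray, not a box. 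The rationality hypothesis has to do genuine arithmetic work beyond ``real with bounded denominator.''

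The paper's actual route for $d = 2$ is arithmetic rather than finiteness-by-height: rationality of the multipliers through period $4$ forces $c \in \mathbb{Q}$ and forces the discriminant $\Delta_4(c) = -2^{24}\left(64 c^3 + 144 c^2 + 108 c + 135\right)(c + 2)^2 c^6$ to be a rational square; after a rational change of variables this becomes the Fermat-type equation $x^3 + y^3 = 4 z^3$ over $\mathbb{Z}$, which is killed by infinite descent in $\mathbb{Z}[j]$ (Lemma~\ref{lemma:fermat}). That Diophantine step has no analogue in your outline and is the real content in degree $2$. For $d \geq 3$ your boundedness philosophy is morally right but much heavier than needed: the paper shows that real multipliers at periods $1$ and $2$ (for $d = 3$, via $\Delta_1 \geq 0$ and $\Delta_2 \geq 0$) or even just at period $1$ (for $d \geq 4$, via Rolle's theorem applied to $M_1$) already force $c = 0$ outright. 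In short, your plan overshoots in degree $\geq 3$ and collapses in degree $2$, where the set you would like to be bounded is unbounded and a genuine Diophantine argument must replace the enumeration.
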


\begin{remark}
For every $d \geq 2$, the polynomial $T_{d}$ has exactly $d -1$ critical points given by $2 \cos\left( \frac{\pi j}{d} \right)$ for $j \in \lbrace 1, \dotsc, d -1 \rbrace$. In particular, a Chebyshev map is unicritical if and only if it has degree $2$.
\end{remark}

Using similar arguments, we also obtain a result concerning cubic polynomial maps \emph{with symmetries}~-- that is, cubic polynomial maps that commute with a nontrivial invertible affine map.

\begin{theorem}
\label{theorem:symcubic}
Assume that $f \colon \mathbb{C} \rightarrow \mathbb{C}$ is a cubic polynomial map with symmetries that has only integer multipliers. Then $f$ is a power map or a Chebyshev map.
\end{theorem}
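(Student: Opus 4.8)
The plan is to reduce to an explicit one‑parameter family, extract a trivial integrality constraint, and then run an arithmetic finiteness argument on the higher–period multipliers.

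First I would normalize. A nontrivial invertible affine map commuting with a cubic has finite order and a fixed point; conjugating that point to the origin turns it into $z \mapsto az$ with $a$ a root of unity, and comparing coefficients in $f(az) = a\,f(z)$ forces $a^{3} = a$, hence $a = -1$, and then the degree‑$2$ and degree‑$0$ coefficients of $f$ must vanish. After rescaling, $f$ is affinely conjugate to $f_{c}(z) := z^{3} + cz$ for some $c \in \mathbb{C}$, and conversely each $f_{c}$ commutes with $z \mapsto -z$. Now $f_{0}$ is the power map $z^{3}$, $f_{-3} = T_{3}$, and $f_{3}$ is affinely conjugate to $-T_{3}$ (via $z \mapsto iz$); comparing fixed‑point multiplier multisets $\{c, 3-2c, 3-2c\}$ with those of $z^{3}$, $T_{3}$, $-T_{3}$ shows these are the only parameters in the family that give a power or Chebyshev map. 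So it suffices to prove $c \in \{0, 3, -3\}$. The multiplier of $f_{c}$ at the fixed point $0$ is $f_{c}'(0) = c$, so the hypothesis immediately gives $c \in \mathbb{Z}$. Then $f_{c}$ is monic with integer coefficients, so every periodic point is an algebraic integer, hence so is every cycle multiplier (a product of values of the integer‑coefficient polynomial $f_{c}'$ at algebraic integers); thus a rational multiplier is automatically an integer here, and the task is: if $c \in \mathbb{Z}$ and every cycle of $f_{c}$ has integer multiplier, then $c \in \{0, 3, -3\}$.

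The heart of the matter is the multiplier polynomials. For each $n$, the multipliers of the period‑$n$ cycles are the roots of a polynomial $M_{n}(x, c) \in \mathbb{Z}[x, c]$, monic in $x$. Because $f_{c}'$ is even, the involution $z \mapsto -z$ permutes the period‑$n$ cycles while preserving multipliers, so for $n$ odd the cycles pair up and $M_{n} = \widetilde{M}_{n}^{2}$ with $\widetilde{M}_{n}(x, c) \in \mathbb{Z}[x, c]$ monic of half the $x$‑degree (for $n$ even there are in addition a few involution‑invariant cycles, whose multipliers are automatically perfect squares and impose nothing). For small $n$ one writes $\widetilde{M}_{n}$ down explicitly; e.g. for $n = 2$ it is $\bigl(x - (2c+3)^{2}\bigr)\bigl(x - (9 - 2c^{2})\bigr)$, which is identically split, so one has to go to higher $n$. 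I would then compute, for a suitable small $n$ (e.g. $n = 3$, where $\widetilde{M}_{n}$ has $x$‑degree $4$), the factorization of $\widetilde{M}_{n}(x, c)$ over $\mathbb{Q}(c)$: each irreducible factor of $x$‑degree $\geq 2$ defines a curve $C$ mapping to the $c$‑line with degree $\geq 2$, and a $c \in \mathbb{Z}$ all of whose period‑$n$ multipliers are rational yields a rational point in the fibre of $C$ above $c$. Finiteness of such $c$ follows from Siegel's theorem on integral points (when the factor amounts to extracting $\sqrt{g(c)}$ with $g \in \mathbb{Z}[c]$ not a square and of degree $\geq 3$, this is exactly finiteness of integer solutions of $y^{2} = g(c)$) — or, if every condition one gathers is of the form ``a polynomial in $c$ is a perfect square'' with the polynomials of low degree, by an entirely elementary argument combining the conditions from two periods (two independent ``linear polynomial in $c$ is a square'' conditions already pin $c$ to a finite set by factoring a difference of squares). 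Finally I would inspect the finitely many surviving candidates and, for each $c \notin \{0, 3, -3\}$, exhibit one cycle of small period with non‑integer multiplier.

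The main obstacle is the middle step of the previous paragraph: carrying out enough of the explicit factorization of $\widetilde{M}_{n}$ over $\mathbb{Q}(c)$ to isolate a factor that is genuinely non‑split, checking the degree/genus hypotheses that make the finiteness of integral points applicable, and ensuring that the conditions one collects really cut the candidate set down to $\{0, 3, -3\}$ rather than a larger finite set that would force yet another period into the analysis. One must also separately treat the degenerate cases in which a cycle multiplier equals $0$ or $\pm 1$ — super‑attracting or parabolic cycles — but these correspond to post‑critically finite or otherwise special parameters, of which there are only finitely many and which can be listed by hand.
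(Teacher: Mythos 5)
Your setup agrees with the paper's almost exactly: normalize to $g_{a}(z) = z^{3} + az$, note that the multiplier at the fixed point $0$ gives $a \in \mathbb{Z}$ immediately, observe that the involution $z \mapsto -z$ pairs up period-$3$ cycles so that $M_{3}^{g_{a}}(\lambda) = N_{3}(a,\lambda)^{2}$ with $N_{3}$ of degree $4$ in $\lambda$, and try to turn the hypothesis that $N_{3}(a,\cdot)$ splits over $\mathbb{Z}$ into an arithmetic constraint on $a$. The gap is in the finiteness step. Your primary mechanism is Siegel's theorem on integral points of a hyperelliptic curve, which is ineffective: it tells you there are finitely many $a$ satisfying your square condition but gives no way to enumerate them, so it cannot deliver the conclusion $a \in \{-3, 0, 3\}$. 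You flag this yourself as ``the main obstacle,'' and your proposed elementary fallback --- combining two conditions of the form ``a \emph{linear} polynomial in $a$ is a square'' and factoring a difference of squares --- is the right tool for the quadratic family in the paper, but it does not apply here, because the single condition that arises from period $3$ is that a \emph{degree-$8$} polynomial $D_{3}(a)$ is a perfect square (after discarding obvious square factors from $\disc_{\lambda}N_{3}$), and there is no second low-degree condition to play it against.

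The paper closes this gap with a concrete, effective descent that your proposal does not anticipate. Since $N_{3}(a,\cdot)$ splits over $\mathbb{Z}$, its discriminant $\disc_{\lambda} N_{3}(a,\lambda)$ is a perfect square; the paper computes
\[
\disc_{\lambda} N_{3}(a,\lambda) = 2^{12}\,3^{12}\, D_{3}(a)\,\bigl(4a^{3}+12a^{2}-3a-27\bigr)^{2}(a-3)^{4}(a+3)^{4}a^{12},
\]
so for $a \notin \{-3,0,3\}$ one needs $D_{3}(a)$, an explicit degree-$8$ integer polynomial, to be a perfect square. A congruence mod $32$ forces $a \equiv 1 \pmod{8}$, and writing $a = 1 + 8b$ the paper exhibits an explicit quartic $L(b)$ with $L(b)^{2} < D_{3}(1+8b) < (L(b)+1)^{2}$ for all $b$ outside a small finite range, which is then checked by hand. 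That squeeze between consecutive squares is the idea your proposal is missing, and it is precisely what makes the argument effective where Siegel is not. Your remark that $f_{3}$ is conjugate to $-T_{3}$ via $z \mapsto iz$ and the multiset comparison for period $1$ are both correct and consistent with the paper's $M_{1}^{g_{a}}(\lambda) = (\lambda - a)(\lambda + 2a - 3)^{2}$.
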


\subsection{Motivation}

In a more general setting, Milnor conjectured in~\cite{M2006} that power maps, Chebyshev maps and flexible Latt\`{e}s maps are the only rational maps whose multipliers are all integers. We may even extend his question as follows:

\begin{question}
Let $K$ be a number field, and denote by $\mathcal{O}_{K}$ its ring of integers. Assume that $f \colon \widehat{\mathbb{C}} \rightarrow \widehat{\mathbb{C}}$ is a rational map whose multipliers all lie in $\mathcal{O}_{K}$~-- or $K$. Is $f$ necessarily a finite quotient of an affine map~-- that is, either a power map, a Chebyshev map or a Latt\`{e}s map?
\end{question}

We give here a positive answer in the case of rational numbers and unicritical polynomial maps. To the author's knowledge, this question has not been studied before. This one can be viewed as an analog of questions concerning rational preperiodic points for a rational map, which have received a lot of attention (see~\cite{BIJMST2019} and~\cite{S2007}).

In~\cite{EvS2011}, Eremenko and van~Strien investigated the rational maps that have only real multipliers: they proved that, if $f \colon \widehat{\mathbb{C}} \rightarrow \widehat{\mathbb{C}}$ is such a map, then either $f$ is a Latt\`{e}s map or its Julia set $\mathcal{J}_{f}$ is contained in a circle; they also gave a description of these maps.

\subsection{Organization of the paper}

In Section~\ref{section:proofs}, we prove some stronger versions of Theorem~\ref{theorem:unicritical}. More precisely, given an integer $d \geq 2$, the conjugacy classes of unicritical polynomials of degree $d$ are parameterized by a one-parameter family $\left( f_{c} \right)_{c \in \mathbb{C}}$, and we determine the parameters $c \in \mathbb{C}$ for which the multiplier polynomials of $f_{c}$ have only rational roots. Using the same strategy, we also prove Theorem~\ref{theorem:symcubic}.

In Section~\ref{section:polynomials}, we study the periodic points and the multipliers of a polynomial map by means of polynomials associated with its dynamics. More precisely, we present certain results about the dynatomic polynomials and the multiplier polynomials of a monic polynomial, emphasizing the case of unicritical polynomials.

Finally, in Section~\ref{section:fermat}, we prove that the Diophantine equation that arose in our proof of Theorem~\ref{theorem:unicritical} has no nontrivial solution.

\begin{acknowledgments}
The author would like to thank his Ph.D. advisors, Xavier Buff and Jasmin Raissy, for all their suggestions and encouragements.
\end{acknowledgments}

\section{Proofs of the results}
\label{section:proofs}

We shall prove here Theorem~\ref{theorem:unicritical} and Theorem~\ref{theorem:symcubic}. Our proofs rely on the result below, which will be presented in greater detail in Section~\ref{section:polynomials} and is an immediate consequence of Proposition~\ref{proposition:dynadef}, Proposition~\ref{proposition:multdef} and Corollary~\ref{corollary:multsplit}.

Fix an integer $d \geq 2$. Since every polynomial map $f \colon \mathbb{C} \rightarrow \mathbb{C}$ is affinely conjugate to a monic polynomial map and the multiplier is invariant under conjugacy, we may restrict our attention to monic polynomials.

\begin{proposition}
\label{proposition:proofs}
Assume that $f \colon \mathbb{C} \rightarrow \mathbb{C}$ is a monic polynomial map of degree $d$. Then 
\begin{itemize}
\item there exists a unique sequence $\left( \Phi_{n}^{f} \right)_{n \geq 1}$ of elements of $\mathbb{C}[z]$ such that, for every $n \geq 1$, we have \[ f^{\circ n}(z) -z = \prod_{k \mid n} \Phi_{k}^{f}(z) \, \text{;} \]
\item for every $n \geq 1$, there is a unique monic polynomial $M_{n}^{f} \in \mathbb{C}[\lambda]$ such that \[ M_{n}^{f}(\lambda)^{n} = \res_{z}\left( \Phi_{n}^{f}(z), \lambda -\left( f^{\circ n} \right)^{\prime}(z) \right) \, \text{,} \] where $\res_{z}$ denotes the resultant with respect to $z$;
\item given a subring $R$ of $\mathbb{C}$ and $n \geq 1$, the multipliers of $f$ at its cycles with period $n$ all lie in $R$ if and only if $M_{n}^{f}$ splits into linear factors of $R[\lambda]$.
\end{itemize}
\end{proposition}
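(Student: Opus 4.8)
The plan is to establish the three assertions in turn, working throughout in the unique factorization domain $\mathbb{C}[z]$ and writing $g_{n}(z) := f^{\circ n}(z) -z$ for $n \geq 1$.

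\emph{Existence of the $\Phi_{n}^{f}$.} I would produce these polynomials by Möbius inversion and then verify that the resulting rational functions are in fact polynomials. Uniqueness is forced: one must take $\Phi_{1}^{f} = g_{1}$ and then, recursively, $\Phi_{n}^{f} = g_{n} \big/ \prod_{k \mid n,\, k < n} \Phi_{k}^{f}$, so at most one sequence can work. For existence I would run a strong induction on $n$, the crux being to show that $\prod_{k \mid n,\, k < n} \Phi_{k}^{f}$ divides $g_{n}$ in $\mathbb{C}[z]$. Two facts feed into this: first, $g_{k} \mid g_{n}$ whenever $k \mid n$, which follows from the elementary divisibility $a -b \mid P(a) -P(b)$ (valid for any $P \in \mathbb{C}[z]$) applied along the telescoping identity $g_{n} = \sum_{j} \bigl( f^{\circ k(j+1)}(z) -f^{\circ kj}(z) \bigr)$; second, for distinct proper divisors $a, b$ of $n$ the polynomials $\Phi_{a}^{f}$ and $\Phi_{b}^{f}$ are coprime, which by the inductive factorizations of $g_{a}$ and $g_{b}$ reduces to the identity $\gcd\bigl( g_{a}, g_{b} \bigr) = g_{\gcd(a, b)}$; this last identity I would obtain by analyzing a common root and its multiplicity in terms of the period of the underlying periodic point, which necessarily divides $\gcd(a, b)$.

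\emph{Existence of the $M_{n}^{f}$.} The strategy is to expand the resultant over the roots of $\Phi_{n}^{f}$ and to exhibit an $n$-fold symmetry in it. Since $\Phi_{n}^{f}$ is monic, \[ \res_{z}\bigl( \Phi_{n}^{f}(z),\, \lambda -\left( f^{\circ n} \right)^{\prime}(z) \bigr) = \prod_{\alpha} \bigl( \lambda -\left( f^{\circ n} \right)^{\prime}(\alpha) \bigr) \, \text{,} \] the product ranging over the roots $\alpha$ of $\Phi_{n}^{f}$ counted with multiplicity; this is monic in $\lambda$. I would then invoke two facts: that $f$ permutes these roots preserving multiplicities~-- provable by relating the multiplicity of $\Phi_{n}^{f}$ at a periodic point to the local dynamics of $f^{\circ n}$ there, which is the same, up to local conjugacy, at every point of the cycle~-- and that $\left( f^{\circ n} \right)^{\prime}$ is constant along each $f$-orbit. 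It follows that every value $\left( f^{\circ n} \right)^{\prime}(\alpha)$ occurs with total multiplicity divisible by $n$: an orbit consisting of points of exact period $n$ has size $n$ and constant multiplicity, while an orbit of points of period $m \mid n$ with $m < n$~-- which must be parabolic, with multiplier a primitive $(n/m)$th root of unity~-- has size $m$ but each of its points occurs in $\Phi_{n}^{f}$ with multiplicity divisible by $n/m$ (a local computation near the parabolic cycle). Hence the resultant is a monic perfect $n$th power in $\mathbb{C}[\lambda]$, and I would take $M_{n}^{f}$ to be its monic $n$th root, which is then unique.

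\emph{The ring-theoretic criterion.} Let $R$ be a subring of $\mathbb{C}$ and $n \geq 1$. From the displayed formula together with the orbit analysis above, the roots of $M_{n}^{f}$ are the multipliers of $f$ at its cycles of period $n$ together with, possibly, the value $1$, which is contributed by any parabolic root of $\Phi_{n}^{f}$ of period $m < n$ since then $\left( f^{\circ n} \right)^{\prime}(\alpha) = \bigl( \left( f^{\circ m} \right)^{\prime}(\alpha) \bigr)^{n/m} = 1$. As $1 \in R$, the multipliers of $f$ at its cycles of period $n$ all lie in $R$ if and only if every complex root of $M_{n}^{f}$ lies in $R$; and since $M_{n}^{f}$ is monic, the latter is in turn equivalent to $M_{n}^{f}$ splitting into linear factors in $R[\lambda]$. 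The step I expect to be the main obstacle is the middle one, and within it the claim that the resultant remains a genuine $n$th power when $f$ has parabolic cycles: this forces one to pin down both the period and the exact multiplicity of every root of $\Phi_{n}^{f}$, and the assertion about parabolic points requires a local normal-form analysis of $f$ near a parabolic cycle together with a count of attracting petals. The remaining steps are essentially formal manipulations in $\mathbb{C}[z]$ and elementary bookkeeping with resultants.
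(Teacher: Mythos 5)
Your coprimality claim in the first bullet is false, and it is a load-bearing step. You assert that for distinct proper divisors $a, b$ of $n$ the polynomials $\Phi_a^f$ and $\Phi_b^f$ are coprime, and you use this together with $\Phi_k^f \mid g_k \mid g_n$ to conclude that $\prod_{k\mid n,\, k<n} \Phi_k^f$ divides $g_n$. But when one divisor divides the other, the two dynatomic polynomials can share a root: a fixed point $z_0$ of $f$ whose multiplier is a primitive $l$th root of unity ($l \geq 2$) is a simple root of $\Phi_1^f = g_1$, and it is also a root of $\Phi_l^f$~-- this is exactly the second alternative in Proposition~\ref{proposition:dynaroots} with $k=1$. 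Concretely, writing $p \geq l$ for the petal number of $f^{\circ l}$ at $z_0$, one finds $\ord_{z_0}\Phi_l^f = \sum_{k\mid l}\mu(l/k)\,\ord_{z_0} g_k = p > 0$, so $\gcd\bigl(\Phi_1^f,\Phi_l^f\bigr)$ is nontrivial. Hence the inference ``each $\Phi_k^f$ divides $g_n$ and they are pairwise coprime, so their product divides $g_n$'' is unsupported. The conclusion $\prod_{k\mid n,\, k<n}\Phi_k^f \mid g_n$ is of course still true, but establishing it along your lines forces you to verify the order inequality $\sum_{k\mid n}\mu(n/k)\,\ord_{z_0} g_k \geq 0$ directly at every periodic point, organized by the period of $z_0$ and the order of its multiplier as a root of unity~-- which is the same parabolic local analysis (including $l \mid p$) you already invoke for the second bullet, so the tools are on the table, but the coprimality shortcut as written does not work.

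Beyond that gap, your route is genuinely different from the paper's. You argue directly over $\mathbb{C}$ and handle the degenerate cases via the Leau--Fatou normal form and petal-count symmetry. The paper instead proves Proposition~\ref{proposition:dynadef} and Proposition~\ref{proposition:multdef} by first working with the universal monic polynomial over $\mathbb{Z}[a_0,\dotsc,a_{d-1}]$, where $f^{\circ n}(z)-z$ is separable, every period-$n$ cycle has exactly $n$ points all with the same multiplier, and the factorization and $n$th-power property are immediate; a specialization argument then transports the result to any monic $f$ over any integral domain. That approach entirely sidesteps parabolic multiplicities and simultaneously yields the integrality and coefficient information the paper uses later, at the cost of introducing the universal ring. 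Your approach, once the first bullet is repaired, is more self-contained over $\mathbb{C}$ but requires the local dynamical input at every stage. Your treatment of the second and third bullets is otherwise sound.
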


\begin{definition}
Suppose that $f \colon \mathbb{C} \rightarrow \mathbb{C}$ is a monic polynomial map of degree $d$. For $n \geq 1$, the polynomial $\Phi_{n}^{f}$ is called the $n$th \emph{dynatomic polynomial} of $f$ and the polynomial $M_{n}^{f}$ is called the $n$th \emph{multiplier polynomial} of $f$.
\end{definition}

For $c \in \mathbb{C}$, let $f_{c} \colon \mathbb{C} \rightarrow \mathbb{C}$ be the polynomial map \[ f_{c} \colon z \mapsto z^{d} +c \, \text{.} \] For every $c \in \mathbb{C}$, the map $f_{c}$ is unicritical with critical point $0$ and critical value $c$. Furthermore, if $f \colon \mathbb{C} \rightarrow \mathbb{C}$ is a unicritical polynomial map of degree $d$, then there exists a parameter $c \in \mathbb{C}$~-- which is unique up to multiplication by a $(d -1)$th root of unity~-- such that $f$ is affinely conjugate to $f_{c}$.

Consequently, to prove Theorem~\ref{theorem:unicritical}, we are reduced to determining the parameters $c \in \mathbb{C}$ for which the polynomials $M_{n}^{f_{c}} \in \mathbb{C}[\lambda]$, with $n \geq 1$, have only rational roots. Note that, if $c \in \mathbb{C}$ is such a parameter, then, for every $n \geq 1$, the polynomial $M_{n}^{f_{c}}$ lies in $\mathbb{Q}[\lambda]$ and its discriminant \[ \Delta_{n}(c) = \disc M_{n}^{f_{c}} \] is the square of a rational number. In fact, we shall see that, to prove Theorem~\ref{theorem:unicritical}, it suffices to examine the polynomials $M_{n}^{f_{c}}$ for only a few small values of $n$.

\subsection{Quadratic polynomial maps}

Let us examine here the quadratic polynomials that have only integer~-- or rational~-- multipliers.

Suppose that $d = 2$. Then, for every $c \in \mathbb{C}$, the map $f_{c}$ is a power map if and only if $c = 0$ and is a Chebyshev map if and only if $c = -2$. Using the software SageMath, we can compute $M_{n}^{f_{c}}$ and $\Delta_{n}(c)$ for $c \in \mathbb{C}$ and small values of $n$.

\begin{example}
For every $c \in \mathbb{C}$, we have 
\begin{align*}
M_{1}^{f_{c}}(\lambda) & = \lambda^{2} -2 \lambda +4 c \, \text{,}\\
M_{2}^{f_{c}}(\lambda) & = \lambda -4 c -4 \, \text{,}\\
M_{3}^{f_{c}}(\lambda) & = \lambda^{2} +(-8 c -16) \lambda +64 c^{3} +128 c^{2} +64 c +64 \, \text{,}\\
\begin{split}
M_{4}^{f_{c}}(\lambda) & = \lambda^{3} +\left( 16 c^{2} -48 \right) \lambda^{2} +\left( -256 c^{4} -256 c^{3} +256 c^{2} +768 \right) \lambda\\
& \quad -4096 c^{6} -12288 c^{5} -12288 c^{4} -12288 c^{3} -8192 c^{2} -4096 \, \text{.}
\end{split}
\end{align*}
\end{example}

\begin{remark}
\label{remark:multcoeffs2}
Observe that, for $n \in \lbrace 1, \dotsc, 4 \rbrace$, the coefficients of $M_{n}^{f_{c}}$ are polynomials in $4 c$ with integer coefficients. As we shall see in Section~\ref{section:polynomials}, this is true for all $n \geq 1$ (compare~\cite[Lemma~1]{B2014}).
\end{remark}

\begin{example}
For every $c \in \mathbb{C}$, we have 
\begin{align*}
\Delta_{1}(c) & = -2^{2} (4 c -1) \, \text{,}\\
\Delta_{2}(c) & = 1 \, \text{,}\\
\Delta_{3}(c) & = -2^{6} (4 c +7) c^{2} \, \text{,}\\
\Delta_{4}(c) & = -2^{24} \left( 64 c^{3} +144 c^{2} +108 c +135 \right) (c +2)^{2} c^{6} \, \text{.}
\end{align*}
\end{example}

\begin{remark}
\label{remark:deltafact}
We shall see in Section~\ref{section:polynomials} that, for every $n \geq 1$, the roots of $\Delta_{n}$ that have an odd multiplicity are precisely the parameters $c_{0} \in \mathbb{C}$ for which the map $f_{c_{0}}$ has a cycle with period $n$ and multiplier $1$ (see~\cite[Proposition~9]{M1996}).
\end{remark}

First, let us examine the quadratic polynomials whose multipliers are integers. By Proposition~\ref{proposition:proofs}, for every $c \in \mathbb{C}$, the map $f_{c}$ has an integer multiplier at each cycle with period $1$ or $2$ if and only if the polynomials $M_{1}^{f_{c}}$ and $M_{2}^{f_{c}}$ split into linear factors of $\mathbb{Z}[\lambda]$, which occurs if and only if there exists $m \in \mathbb{Z}$ such that $c = \frac{1 -m^{2}}{4}$. In particular, there exist infinitely many such parameters $c \in \mathbb{C}$. In contrast, by considering also the multipliers at the cycles with period $3$, we obtain the following:

\begin{proposition}
Assume that $f \colon \mathbb{C} \rightarrow \mathbb{C}$ is a quadratic polynomial map that has an integer multiplier at each cycle with period less than or equal to $3$. Then $f$ is either a power map or a Chebyshev map.
\end{proposition}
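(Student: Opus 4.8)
The plan is to reduce everything, via the normal form $f_{c}\colon z\mapsto z^{d}+c$ with $d=2$, to a constraint on the single parameter $c\in\mathbb{C}$, and then to a finite Diophantine check. Since $f$ is affinely conjugate to some $f_{c}$ and multipliers are conjugacy invariants, it suffices to determine the $c\in\mathbb{C}$ for which $f_{c}$ has an integer multiplier at every cycle of period $1$, $2$ and $3$. By Proposition~\ref{proposition:proofs} (with $R=\mathbb{Z}$) this means exactly that $M_{1}^{f_{c}}$, $M_{2}^{f_{c}}$ and $M_{3}^{f_{c}}$ each split into linear factors of $\mathbb{Z}[\lambda]$. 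The discussion preceding the statement already records that the conditions on $M_{1}^{f_{c}}$ and $M_{2}^{f_{c}}$ together are equivalent to $c=\frac{1-m^{2}}{4}$ for some $m\in\mathbb{Z}$, which we may take $\geq 0$; in particular $4c\in\mathbb{Z}$.

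So the work is to add in the period-$3$ condition. Since $4c\in\mathbb{Z}$, Remark~\ref{remark:multcoeffs2} gives $M_{3}^{f_{c}}\in\mathbb{Z}[\lambda]$, and, being monic of degree $2$, it splits into linear factors of $\mathbb{Z}[\lambda]$ if and only if its discriminant $\Delta_{3}(c)=-2^{6}(4c+7)c^{2}$ is the square of an integer (the standard fact for monic integral quadratics, using that the discriminant and the linear coefficient have the same parity, so a square discriminant automatically yields integer roots). Substituting $c=\frac{1-m^{2}}{4}$ turns $\Delta_{3}(c)$ into $4\,(1-m^{2})^{2}\,(m^{2}-8)$. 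If $m^{2}=1$ then $c=0$ and $f_{c}$ is a power map. Otherwise $4(1-m^{2})^{2}$ is a nonzero perfect square, so $\Delta_{3}(c)$ is a square precisely when $m^{2}-8=k^{2}$ for some integer $k\geq 0$.

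The equation $m^{2}-k^{2}=8$ is then immediate: $(m-k)(m+k)=8$ with the two factors of equal parity, hence both even, forcing $\{\,|m-k|,\ |m+k|\,\}=\{2,4\}$ and $|m|=3$. This gives $c=\frac{1-9}{4}=-2$, so $f_{c}$ is a Chebyshev map. Therefore $c\in\{0,-2\}$, i.e. $f$ is a power map or a Chebyshev map (consistent with Proposition~\ref{proposition:special}). Concretely one can record that $M_{3}^{f_{0}}(\lambda)=(\lambda-8)^{2}$ and $M_{3}^{f_{-2}}(\lambda)=(\lambda-8)(\lambda+8)$.

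I do not expect a genuine obstacle here: the entire content is the dictionary of Proposition~\ref{proposition:proofs}, the explicit form of $M_{3}^{f_{c}}$ (equivalently of $\Delta_{3}$), and then a one-line analysis of $(m-k)(m+k)=8$. The only point requiring a little care is the step "$M_{3}^{f_{c}}$ splits over $\mathbb{Z}[\lambda]$ $\iff$ $\Delta_{3}(c)$ is a perfect square", which needs integrality of the coefficients of $M_{3}^{f_{c}}$ — exactly what Remark~\ref{remark:multcoeffs2} provides. It is also worth noting that periods up to $3$ are genuinely needed: periods $1$ and $2$ alone leave the infinite family $c=\frac{1-m^{2}}{4}$.
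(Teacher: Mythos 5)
Your proof is correct and follows essentially the same route as the paper's: both reduce to the condition that $1-4c$ and $-(4c+7)$ are perfect squares and solve $(m-k)(m+k)=8$ (the paper writes $a,b$ where you write $m,k$, but $a^2 = 1-4c = m^2$ and $b^2 = -(4c+7) = m^2-8 = k^2$, so the Diophantine equation is identical). Your explicit verification that $M_{3}^{f_{0}}(\lambda)=(\lambda-8)^{2}$ and $M_{3}^{f_{-2}}(\lambda)=\lambda^{2}-64$ is a nice sanity check not present in the paper.
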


\begin{proof}
There exists a parameter $c \in \mathbb{C}$ such that $f$ is affinely conjugate to $f_{c}$. By Proposition~\ref{proposition:proofs}, the polynomials $M_{n}^{f_{c}}$, with $n \in \lbrace 1, 2, 3 \rbrace$, split into linear factors of $\mathbb{Z}[\lambda]$, and hence $4 c$ is an integer and \[ \Delta_{1}(c) = -2^{2} (4 c -1) \quad \text{and} \quad \Delta_{3}(c) = -2^{6} (4 c +7) c^{2} \] are the squares of integers. Therefore, either $c = 0$ or there exist $a, b \in \mathbb{Z}_{\geq 0}$ such that \[ -(4 c -1) = a^{2} \quad \text{and} \quad -(4 c +7) = b^{2} \, \text{.} \] In the latter case, we have $(a -b) (a +b) = 8$, and hence \[ \left\lbrace \begin{array}{l} a -b = 1\\ a +b = 8 \end{array} \right. \quad \text{or} \quad \left\lbrace \begin{array}{l} a -b = 2\\ a +b = 4 \end{array} \right. \, \text{,} \] which yields $(a, b) = (3, 1)$ and $c = -2$. Thus, the proposition is proved.
\end{proof}

Let us now study the quadratic polynomial maps whose multipliers are rational. There exist infinitely many parameters $c \in \mathbb{C}$ for which the map $f_{c}$ has a rational multiplier at each cycle with period less than or equal to $3$. More precisely, a parameter $c \in \mathbb{C}$ has this property if and only if $c$ is rational and $\Delta_{1}(c)$ and $\Delta_{3}(c)$ are the squares of rational numbers, which occurs if and only if $c = 0$ or there exists $r \in \mathbb{Q}_{\neq 0}$ such that $c = \frac{-\left( r^{4} +3 r^{2} +4 \right)}{4 r^{2}}$. In contrast, by considering also the multipliers at the cycles with period $4$, we are led to examine the rational points on a certain elliptic curve and we obtain the following result, which is a stronger version of Theorem~\ref{theorem:unicritical} in the case of quadratic polynomials:

\begin{proposition}
\label{proposition:quadratic}
Assume that $f \colon \mathbb{C} \rightarrow \mathbb{C}$ is a quadratic polynomial map that has a rational multiplier at each cycle with period less than or equal to $4$. Then $f$ is either a power map or a Chebyshev map.
\end{proposition}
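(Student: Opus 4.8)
The plan is to follow the strategy indicated for the integer case, but now imposing the rationality constraints coming from the multiplier polynomials $M_n^{f_c}$ for $n \in \{1,2,3,4\}$. By Proposition~\ref{proposition:proofs}, if $f$ is affinely conjugate to $f_c$ and has rational multipliers at all cycles of period at most $4$, then each $M_n^{f_c}$ with $n \in \{1,2,3,4\}$ splits into linear factors over $\mathbb{Q}[\lambda]$. Since the coefficients of $M_n^{f_c}$ are polynomials in $4c$ with integer coefficients (Remark~\ref{remark:multcoeffs2}), splitting of $M_1^{f_c}(\lambda) = \lambda^2 - 2\lambda + 4c$ already forces $4c \in \mathbb{Q}$, hence $c \in \mathbb{Q}$; and then each discriminant $\Delta_n(c)$ must be the square of a rational number. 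As noted just before the statement, using $n \leq 3$ one finds $c = 0$ (the power map) or $c = \frac{-(r^4 + 3r^2 + 4)}{4r^2}$ for some $r \in \mathbb{Q}_{\neq 0}$.

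The new input is the condition at period $4$. First I would record that $\Delta_4(c) = -2^{24}\bigl(64c^3 + 144c^2 + 108c + 135\bigr)(c+2)^2 c^6$ must be a rational square. For $c = 0$ we are already done ($f$ is a power map), so assume $c \neq 0$; then, discarding the square factors $(c+2)^2 c^6$ and the sign-and-power-of-two factor (note $-2^{24} = -(2^{12})^2$, so its square class is that of $-1$), the condition becomes that $-\bigl(64c^3 + 144c^2 + 108c + 135\bigr)$ is a rational square. Substituting the parameterization $c = \frac{-(r^4 + 3r^2 + 4)}{4r^2}$ coming from the period-$\leq 3$ analysis, clearing denominators turns this into the requirement that a certain explicit polynomial in $r$ be a rational square, which defines a curve of genus $1$. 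The task is then to exhibit a Weierstrass model of the associated elliptic curve $E/\mathbb{Q}$, determine $E(\mathbb{Q})$ (rank and torsion), and check that the only rational points pull back to $r$-values giving $c = -2$, i.e.\ the Chebyshev parameter. Concretely, one expects $E(\mathbb{Q})$ to be finite, all of its points accounted for by the obvious ones, and the preimage computation to yield exactly $c = -2$ together with the degenerate points (where $r = 0$ or the substitution breaks down, corresponding to $c = 0$).

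Alternatively, and perhaps more cleanly, one can avoid the period-$\leq 3$ parameterization and instead intersect directly: the parameter $c \in \mathbb{Q}$ must make \emph{both} $\Delta_1(c) = -2^2(4c-1)$ \emph{and} $64c^3 + 144c^2 + 108c + 135$ (up to sign) rational squares (using that $c \neq 0$ handles $\Delta_3$'s $c^2$ factor, and $\Delta_3(c)$'s remaining factor $-(4c+7)$ must also be a square). Setting $-(4c-1) = u^2$ and $-(4c+7) = w^2$ with $u, w \in \mathbb{Q}$ gives $w^2 - u^2 = -6$, the conic $u^2 - w^2 = 6$, which is parameterized rationally; substituting into the period-$4$ square condition again produces a genus-$1$ curve whose rational points must be enumerated. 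Either route reduces Proposition~\ref{proposition:quadratic} to a concrete, finite computation on an explicit elliptic curve over $\mathbb{Q}$.

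The main obstacle is precisely this elliptic-curve step: identifying the right Weierstrass model, computing its Mordell--Weil group over $\mathbb{Q}$, and verifying that no rational point other than those forced by the known solutions $c \in \{0, -2\}$ survives. This is the point at which I would invoke a descent argument or a computer algebra system (e.g.\ to compute the rank and the torsion subgroup, and to run a search that provably finds all rational points), and then carefully trace each rational point back through the substitutions to confirm it yields $c = 0$ or $c = -2$, so that $f$ is a power map or a Chebyshev map as claimed.
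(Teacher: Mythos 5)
Your proposal correctly reduces the problem to the right Diophantine question, but leaves the crux of the argument as an unfinished "invoke a computer/Mordell--Weil computation" step, which is exactly the point where the paper supplies a concrete argument. After extracting rationality of $c$ and reducing the squareness of $\Delta_4(c)$ to the condition that $-\left(64c^3 + 144c^2 + 108c + 135\right)$ be a rational square (for $c \notin \{0,-2\}$), you flag the resulting genus-$1$ curve and stop. That is a genuine gap: without actually determining the rational points on that curve, the proposition is not proved. The paper closes it with an explicit birational change of variables: if $r \in \mathbb{Q}$ satisfies $-\left(64c^3 + 144c^2 + 108c + 135\right) = r^2$, one checks $c \neq -\frac{3}{4}$ and sets $a = \frac{r-18}{3(4c+3)}$, $b = \frac{-(r+18)}{3(4c+3)}$; a direct computation gives $a^3 + b^3 = 4$. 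Clearing denominators yields a solution of $x^3 + y^3 = 4z^3$ with $z \neq 0$, which the paper rules out in the appendix (Lemma~\ref{lemma:fermat}) by an elementary infinite-descent argument in the Eisenstein integers. No rank or torsion computation on an elliptic curve is needed.

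Two smaller remarks. First, both of your proposed routes detour through the period-$\leq 3$ constraints before bringing in period $4$, which is unnecessary: once $c \in \mathbb{Q}$ (already forced by $M_1$), the period-$4$ discriminant condition alone does all the work. The extra substitution $c = \frac{-(r^4 + 3r^2 + 4)}{4r^2}$ only complicates the curve whose points you then have to find. Second, the reduction to the square-class of $-\left(64c^3+144c^2+108c+135\right)$ requires $c \neq 0$ \emph{and} $c \neq -2$ so that $(c+2)^2 c^6$ is a nonzero square; you only explicitly discard $c = 0$, though $c = -2$ is of course an allowed (Chebyshev) conclusion, so the logic still closes.
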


\begin{proof}
There exists a parameter $c \in \mathbb{C}$ such that $f$ is affinely conjugate to $f_{c}$. By Proposition~\ref{proposition:proofs}, the polynomials $M_{n}^{f_{c}}$, with $n \in \lbrace 1, \dotsc, 4 \rbrace$, split into linear factors of $\mathbb{Q}[\lambda]$, and hence $c$ is rational and \[ \Delta_{4}(c) = -2^{24} \left( 64 c^{3} +144 c^{2} +108 c +135 \right) (c +2)^{2} c^{6} \] is the square of a rational number. Note that, if there exists $r \in \mathbb{Q}$ such that \[ -\left( 64 c^{3} +144 c^{2} +108 c +135 \right) = r^{2} \, \text{,} \] then $c \neq \frac{-3}{4}$ and the rational numbers $a = \frac{r -18}{3 (4 c +3)}$ and $b = \frac{-(r +18)}{3 (4 c +3)}$ satisfy \[ a^{3} +b^{3} -4 = \frac{-4 \left( 64 c^{3} +144 c^{2} +108 c +135 +r^{2} \right)}{(4 c +3)^{3}} = 0 \, \text{,} \] which contradicts the fact that the Diophantine equation $x^{3} +y^{3} = 4 z^{3}$ has no solution $(x, y, z) \in \mathbb{Z}^{3}$ with $z \neq 0$ by Lemma~\ref{lemma:fermat}. Therefore, we have $c \in \lbrace -2, 0 \rbrace$. Thus, the proposition is proved.
\end{proof}

\begin{remark}
It follows from~\cite[Theorem~1]{EvS2011} that, for every $c \in \mathbb{C}$, the map $f_{c}$ has a real multiplier at each cycle if and only if $c \in (-\infty, -2] \cup \lbrace 0 \rbrace$. In particular, the property of having only real multipliers does not characterize power maps and Chebyshev maps among the quadratic polynomials.
\end{remark}

\subsection{Unicritical polynomial maps of degree at least $3$}

We shall see here that, unlike in the case of quadratic polynomials, power maps are the only unicritical polynomial maps of degree at least $3$ that have only real multipliers. Note that, for every $c \in \mathbb{C}$, the map $f_{c}$ is a power map if and only if $c = 0$.

First, suppose that $d = 3$. Using the software SageMath, we can compute $M_{n}^{f_{c}}$ and $\Delta_{n}(c)$ for $c \in \mathbb{C}$ and $n \in \lbrace 1, 2 \rbrace$.

\begin{example}
For every $c \in \mathbb{C}$, we have \[ M_{1}^{f_{c}}(\lambda) = \lambda^{3} -6 \lambda^{2} +9 \lambda -27 c^{2} \] and \[ M_{2}^{f_{c}}(\lambda) = \lambda^{3} -27 \lambda^{2} +\left( 162 c^{2} +243 \right) \lambda -729 c^{4} -1458 c^{2} -729 \, \text{.} \]
\end{example}

\begin{remark}
\label{remark:multcoeffs3}
We shall see in Section~\ref{section:polynomials} that, for every $n \geq 1$, the coefficients of $M_{n}^{f_{c}}$ are polynomials in $27 c^{2}$ with integer coefficients (compare~\cite[Theorem~1.1]{M2014}).
\end{remark}

\begin{example}
For every $c \in \mathbb{C}$, we have \[ \Delta_{1}(c) = -3^{6} \left( 27 c^{2} -4 \right) c^{2} \quad \text{and} \quad \Delta_{2}(c) = -3^{12} \left( 27 c^{2} +32 \right) c^{6} \, \text{.} \]
\end{example}

It follows from Proposition~\ref{proposition:proofs} that, for every $c \in \mathbb{C}$, the map $f_{c}$ has a real multiplier at each fixed point if and only if $c^{2}$ is real and $\Delta_{1}(c) \geq 0$, which occurs if and only if $c^{2} \in \left[ 0, \frac{4}{27} \right]$. In particular, power maps are not the only cubic unicritical polynomial maps whose multiplier at each fixed point is real.

\begin{remark}
There also exist infinitely many parameters $c \in \mathbb{C}$ for which the map $f_{c}$ has a rational multiplier at each fixed point. More precisely, a parameter $c \in \mathbb{C}$ has this property if and only if the polynomial $M_{1}^{f_{c}}$ has a rational root and its discriminant $\Delta_{1}(c)$ is the square of a rational number, which occurs if and only if there exists $r \in \mathbb{Q}$ such that $c^{2} = \frac{4 \left( r^{2} -1 \right)^{2}}{\left( r^{2} +3 \right)^{3}}$.
\end{remark}

In contrast, by considering also the multipliers at the cycles with period $2$, we obtain the result below, which immediately implies Theorem~\ref{theorem:unicritical} in the case of cubic unicritical polynomials.

\begin{proposition}
\label{proposition:cubic}
Assume that $f \colon \mathbb{C} \rightarrow \mathbb{C}$ is a cubic unicritical polynomial map that has a real multiplier at each cycle with period $1$ or $2$. Then $f$ is a power map.
\end{proposition}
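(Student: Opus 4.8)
The plan is to run the same template as the proof of Proposition~\ref{proposition:quadratic}, but now with $R = \mathbb{R}$ and with the degree~$3$ multiplier polynomials $M_{1}^{f_{c}}$ and $M_{2}^{f_{c}}$, which have already been written down explicitly above. Since $f$ is affinely conjugate to $f_{c}$ for some $c \in \mathbb{C}$ (unique up to sign, as $d - 1 = 2$) and the multiplier is a conjugacy invariant, it suffices to determine the parameters $c$ for which both $M_{1}^{f_{c}}$ and $M_{2}^{f_{c}}$ split into linear factors over $\mathbb{R}$, and to show that this forces $c = 0$, i.e.\ that $f$ is a power map.

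First I would invoke Proposition~\ref{proposition:proofs} with $R = \mathbb{R}$: the hypothesis says exactly that $M_{1}^{f_{c}}$ and $M_{2}^{f_{c}}$ are products of linear factors in $\mathbb{R}[\lambda]$; in particular they both lie in $\mathbb{R}[\lambda]$. Comparing the constant term of $M_{1}^{f_{c}}(\lambda) = \lambda^{3} - 6 \lambda^{2} + 9 \lambda - 27 c^{2}$ with a product of real linear factors shows $c^{2} \in \mathbb{R}$; set $t = c^{2} \in \mathbb{R}$. Now a monic cubic with real coefficients splits into linear factors over $\mathbb{R}$ if and only if its discriminant is nonnegative (the degenerate case of a repeated real root being the one where the discriminant vanishes). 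Hence, using the formulas for $\Delta_{1}$ and $\Delta_{2}$ recalled above, the hypothesis yields
\[ -(27 t - 4)\, t \geq 0 \quad \text{and} \quad -(27 t + 32)\, t^{3} \geq 0 \, \text{.} \]

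Next I would solve these two inequalities in $t \in \mathbb{R}$. The first gives $t \in \left[ 0, \tfrac{4}{27} \right]$. The second gives $t \in \left[ -\tfrac{32}{27}, 0 \right]$: here one uses that $t^{3}$ has the same sign as $t$, so the product $(27 t + 32)\, t^{3}$ is strictly positive for $t > 0$ and for $t < -\tfrac{32}{27}$, and is $\leq 0$ exactly on $\left[ -\tfrac{32}{27}, 0 \right]$. These two ranges intersect only at $t = 0$, so $c^{2} = 0$, hence $c = 0$ and $f_{c} = f_{0} \colon z \mapsto z^{3}$ is a power map. This also recovers, via Proposition~\ref{proposition:proofs}, the claim made just before the proposition that power maps are the only cubic unicritical maps with only real multipliers.

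I do not expect a genuine obstacle: once Proposition~\ref{proposition:proofs} and the explicit formulas for $M_{1}^{f_{c}}$, $M_{2}^{f_{c}}$, $\Delta_{1}$ and $\Delta_{2}$ are available, the whole argument collapses to the elementary sign analysis above. The only point deserving care is the reduction from ``real multipliers'' to ``discriminant $\geq 0$'', which rests on the standard characterization of real cubics with all roots real. In contrast to the quadratic case, no Diophantine equation and no elliptic curve intervene here: the period~$1$ and period~$2$ multiplier data already pin down $c$, and in fact they do so over $\mathbb{R}$, not merely over $\mathbb{Q}$.
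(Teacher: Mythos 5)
Your proposal is correct and follows essentially the same route as the paper: reduce to $f_c$, invoke Proposition~\ref{proposition:proofs} to conclude $c^2 \in \mathbb{R}$ and $\Delta_1(c), \Delta_2(c) \geq 0$, then intersect $\left[0, \tfrac{4}{27}\right]$ and $\left[-\tfrac{32}{27}, 0\right]$ to force $c = 0$. The only addition you make is to spell out the discriminant criterion for a real cubic to split over $\mathbb{R}$, which the paper takes for granted.
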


\begin{proof}
There exists a parameter $c \in \mathbb{C}$ such that $f$ is affinely conjugate to $f_{c}$. By Proposition~\ref{proposition:proofs}, the polynomials $M_{1}^{f_{c}}$ and $M_{2}^{f_{c}}$ split into linear factors of $\mathbb{R}[\lambda]$, and hence $c^{2}$ is real and \[ \Delta_{1}(c) = -3^{6} \left( 27 c^{2} -4 \right) c^{2} \geq 0 \quad \text{and} \quad \Delta_{2}(c) = -3^{12} \left( 27 c^{2} +32 \right) c^{6} \geq 0 \, \text{.} \] Therefore, we have \[ c^{2} \in \left[ \frac{-32}{27}, 0 \right] \cap \left[ 0, \frac{4}{27} \right] = \lbrace 0 \rbrace \, \text{.} \] Thus, the proposition is proved.
\end{proof}

Let us now investigate the unicritical polynomial maps of degree at least $4$ whose multipliers are real. We shall see that, unlike in the case of cubic unicritical polynomials, the property of having a real multiplier at each fixed point characterizes here power maps. Our result relies on the calculation of $M_{1}^{f_{c}}$ for $c \in \mathbb{C}$.

\begin{example}
\label{example:mult1}
Suppose that $d \geq 2$ and $c \in \mathbb{C}$. Then we have \[ M_{1}^{f_{c}}(\lambda) = \res_{z}\left( z^{d} -z +c, \lambda -d z^{d -1} \right) = (-d)^{d} \prod_{j = 1}^{d -1} \left( z_{j}^{d} -z_{j} +c \right) \, \text{,} \] where $z_{1}, \dotsc, z_{d -1}$ are the roots of $d z^{d -1} -\lambda \in \mathbb{C}[z]$. It follows that \[ \begin{split} M_{1}^{f_{c}}(\lambda) & = (-d)^{d} \prod_{j = 1}^{d -1} \left( d^{-1} (\lambda -d) z_{j} +c \right)\\ & = (-d)^{d} \left( c^{d -1} +\sum_{j = 1}^{d -1} d^{-j} (\lambda -d)^{j} \sigma_{j} c^{d -1 -j} \right) \, \text{,} \end{split} \] where $\sigma_{1}, \dotsc, \sigma_{d -1}$ are the elementary symmetric functions of $z_{1}, \dotsc, z_{d -1}$. Therefore, by the relations between roots and coefficients of a polynomial, we have \[ M_{1}^{f_{c}}(\lambda) = \lambda (\lambda -d)^{d -1} +(-d)^{d} c^{d -1} \, \text{.} \]
\end{example}

The following result is a stronger version of Theorem~\ref{theorem:unicritical} in the case of unicritical polynomials of degree at least $4$.

\begin{proposition}
\label{proposition:higher}
Assume that $f \colon \mathbb{C} \rightarrow \mathbb{C}$ is a unicritical polynomial map of degree $d \geq 4$ that has a real multiplier at each fixed point. Then $f$ is a power map.
\end{proposition}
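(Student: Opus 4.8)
The plan is to combine the explicit formula for $M_1^{f_c}$ obtained in Example~\ref{example:mult1} with a rigidity property of real‑rooted polynomials. Since $f$ is affinely conjugate to $f_c$ for some $c \in \mathbb{C}$, by Proposition~\ref{proposition:proofs} the hypothesis means exactly that $M_1^{f_c}(\lambda) = \lambda(\lambda - d)^{d-1} + (-d)^{d} c^{d-1}$ splits into linear factors over $\mathbb{R}$, i.e.\ has all of its roots real. As this polynomial is monic, it then lies in $\mathbb{R}[\lambda]$; in particular $b := (-d)^{d} c^{d-1}$ is real, and it will be enough to prove that $b = 0$, since this forces $c = 0$ and hence that $f$ is a power map.

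I would then differentiate to get $\bigl( M_1^{f_c} \bigr)'(\lambda) = d (\lambda - 1)(\lambda - d)^{d-2}$, so that $\lambda = d$ is a root of the derivative of multiplicity $d - 2$. The key elementary fact I would invoke is the following: if $P \in \mathbb{R}[\lambda]$ has all of its roots real, say with distinct roots $r_1 < \dots < r_k$ of multiplicities $m_1, \dots, m_k$, then $P'$ has each $r_i$ as a root of multiplicity exactly $m_i - 1$ together with exactly one further, simple root in each open interval $(r_i, r_{i+1})$. Indeed, $P'$ automatically has each $r_i$ with multiplicity at least $m_i - 1$ and, by Rolle's theorem, at least one root in each of the $k - 1$ gaps; since $\sum_i (m_i - 1) + (k - 1) = d - 1 = \deg P'$, all these inequalities must be equalities. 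In particular, any root of $P'$ that is not a root of $P$ is simple.

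Applying this with $P = M_1^{f_c}$: the point $\lambda = d$ is a root of $P'$ of multiplicity $d - 2 \geq 2$ because $d \geq 4$, so it cannot fail to be a root of $P$; hence $M_1^{f_c}(d) = 0$. But $M_1^{f_c}(d) = d \cdot 0^{d-1} + b = b$, so $b = 0$, that is, $c^{d-1} = 0$ and therefore $c = 0$. Thus $f$ is a power map.

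There is no serious obstacle here: everything reduces to Example~\ref{example:mult1} and the multiplicity count for $P'$, which is immediate from Rolle's theorem and a comparison of degrees. (One could instead study the graph of the real function $\lambda \mapsto \lambda(\lambda - d)^{d-1}$ and count its intersections with horizontal lines, but that forces an awkward split according to the parity of $d$.) I would also remark that the hypothesis $d \geq 4$ enters only through $d - 2 \geq 2$: for $d = 3$ the critical point $\lambda = d$ of $M_1^{f_c}$ is simple, which is precisely why Proposition~\ref{proposition:cubic} must also bring in the period‑two multipliers.
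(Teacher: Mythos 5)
Your proof is correct and slightly different in route from the paper's. The paper passes to the auxiliary polynomial $L(\lambda) = \lambda^{d} M_{1}^{f_{c}}(\lambda^{-1} + d) = (-d)^{d} c^{d-1}\lambda^{d} + d\lambda + 1$, notes that real-rootedness is preserved under this reciprocal-type transform and under differentiation (Rolle), and then kills $c$ because the root set of $L'(\lambda) = (-1)^{d} d^{d+1} c^{d-1}\lambda^{d-1} + d$ is invariant under multiplication by $(d-1)$th roots of unity, which is incompatible with being a subset of $\mathbb{R}$ once $d-1 \geq 3$. You instead differentiate $M_{1}^{f_{c}}$ directly, observe that $\lambda = d$ is a root of $\bigl(M_{1}^{f_{c}}\bigr)'(\lambda) = d(\lambda-1)(\lambda-d)^{d-2}$ of multiplicity $d-2 \geq 2$, and invoke the standard Rolle/degree-count lemma that a real-rooted polynomial has no multiple critical points outside its own zero set; evaluating $M_{1}^{f_{c}}(d) = b$ then forces $b = (-d)^{d}c^{d-1} = 0$. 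Both arguments are Rolle-based rigidity statements about real-rooted polynomials applied to the same explicit $M_{1}^{f_{c}}$; yours avoids the auxiliary polynomial $L$ and is arguably more direct, while the paper's makes the rotational-symmetry obstruction explicit. Your closing remark correctly locates where $d \geq 4$ is used, matching the paper's need for a separate treatment of $d = 3$.
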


\begin{proof}
There exists a parameter $c \in \mathbb{C}$ such that $f$ is affinely conjugate to $f_{c}$. By Proposition~\ref{proposition:proofs}, the polynomial \[ M_{1}^{f_{c}}(\lambda) = \lambda (\lambda -d)^{d -1} +(-d)^{d} c^{d -1} \] splits into linear factors of $\mathbb{R}[\lambda]$, and hence the same is true of the polynomial \[ L(\lambda) = \lambda^{d} M_{1}^{f_{c}}\left( \lambda^{-1} +d \right) = (-d)^{d} c^{d -1} \lambda^{d} +d \lambda +1 \] and, by Rolle's theorem, of its derivative \[ L^{\prime}(\lambda) = (-1)^{d} d^{d +1} c^{d -1} \lambda^{d -1} +d \, \text{.} \] Therefore, we have $c = 0$ since the set of roots of $L^{\prime}$ is invariant under multiplication by a $(d -1)$th root of unity and $d \geq 4$. Thus, the proposition is proved.
\end{proof}

Finally, we have proved Theorem~\ref{theorem:unicritical}, which follows immediately from Proposition~\ref{proposition:quadratic}, Proposition~\ref{proposition:cubic} and Proposition~\ref{proposition:higher}.

\subsection{Cubic polynomial maps with symmetries}

We shall use here the same strategy to study the cubic polynomial maps with symmetries whose multipliers are integers and prove Theorem~\ref{theorem:symcubic}.

For $a \in \mathbb{C}$, let $g_{a} \colon \mathbb{C} \rightarrow \mathbb{C}$ be the cubic polynomial map \[ g_{a} \colon z \mapsto z^{3} +a z \, \text{.} \] For every $a \in \mathbb{C}$, the map $g_{a}$ fixes $0$ with multiplier $a$ and commutes with $z \mapsto -z$. Furthermore, if $f \colon \mathbb{C} \rightarrow \mathbb{C}$ is a cubic polynomial map with symmetries, then there exists a unique parameter $a \in \mathbb{C}$ such that $f$ is affinely conjugate to $g_{a}$.

Unlike the family of cubic unicritical polynomial maps, the family of cubic polynomial maps with symmetries contains both power maps and Chebyshev maps. More precisely, for every $a \in \mathbb{C}$, the map $g_{a}$ is a power map if and only if $a = 0$ and is a Chebyshev map if and only if $a = \pm 3$.

Using the software SageMath, we can compute $M_{n}^{g_{a}}$ for $a \in \mathbb{C}$ and $n \in \lbrace 1, 2, 3 \rbrace$.

\begin{example}
For every $a \in \mathbb{C}$, we have 
\begin{align*}
M_{1}^{g_{a}}(\lambda) & = (\lambda -a) (\lambda +2 a -3)^{2} \, \text{,}\\
M_{2}^{g_{a}}(\lambda) & = \left( \lambda -4 a^{2} -12 a -9 \right) \left( \lambda +2 a^{2} -9 \right)^{2} \, \text{,}\\
M_{3}^{g_{a}}(\lambda) & = N_{3}(a, \lambda)^{2} \, \text{,}
\end{align*} 
where $N_{3} \in \mathbb{Z}[a, \lambda]$ is given by \[ \begin{split} N_{3}(a, \lambda) & = \lambda^{4} +\bigl( 2 a^{3} +12 a^{2} -18 a -108 \bigr) \lambda^{3}\\ & \quad +\bigl( -48 a^{6} -72 a^{5} +396 a^{4} +486 a^{3} -324 a^{2} +1458 a +4374 \bigr) \lambda^{2}\\ & \quad +\bigl( 32 a^{9} -792 a^{7} -432 a^{6} +5832 a^{5} +5832 a^{4} -7290 a^{3} -8748 a^{2}\\ & \quad -39366 a -78732 \bigr) \lambda +256 a^{12} +384 a^{11} -4608 a^{10} -6912 a^{9}\\ & \quad +24624 a^{8} +36936 a^{7} -23328 a^{6} -34992 a^{5} -131220 a^{4}\\ & \quad -196830 a^{3} +236196 a^{2} +354294 a +531441 \, \text{.} \end{split} \] Moreover, we have \[ \disc_{\lambda} N_{3}(a, \lambda) = 2^{12} 3^{12} D_{3}(a) \left( 4 a^{3} +12 a^{2} -3 a -27 \right)^{2} (a -3)^{4} (a +3)^{4} a^{12} \, \text{,} \] where $\disc_{\lambda}$ denotes the discriminant with respect to $\lambda$ and $D_{3} \in \mathbb{Z}[a]$ is given by \[ D_{3}(a) = 4 a^{8} +16 a^{7} -35 a^{6} -206 a^{5} -113 a^{4} +376 a^{3} +715 a^{2} +1690 a +2197 \, \text{.} \]
\end{example}

It follows from Proposition~\ref{proposition:proofs} that, for every $a \in \mathbb{C}$, the map $g_{a}$ has an integer multiplier at each cycle with period $1$ or $2$ if and only if $a$ is an integer. By considering also the multipliers at the cycles with period $3$, we obtain the following stronger version of Theorem~\ref{theorem:symcubic}:

\begin{proposition}
Assume that $f \colon \mathbb{C} \rightarrow \mathbb{C}$ is a cubic polynomial map with symmetries that has an integer multiplier at each cycle with period less than or equal to $3$. Then $f$ is either a power map or a Chebyshev map.
\end{proposition}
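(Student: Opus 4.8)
The plan is to follow the strategy of Proposition~\ref{proposition:quadratic} and Proposition~\ref{proposition:cubic}. Since the multiplier is invariant under conjugacy, I may assume that $f = g_{a}$ for the unique $a \in \mathbb{C}$ with $f$ affinely conjugate to $g_{a}$. By Proposition~\ref{proposition:proofs}, the hypothesis says precisely that $M_{n}^{g_{a}}$ splits into linear factors of $\mathbb{Z}[\lambda]$ for each $n \in \lbrace 1, 2, 3 \rbrace$. From the factorization $M_{1}^{g_{a}}(\lambda) = (\lambda -a)(\lambda +2 a -3)^{2}$ one already reads off that $a \in \mathbb{Z}$ (then $M_{2}^{g_{a}}$ splits automatically), and in particular $N_{3}(a, \lambda)$ is a monic element of $\mathbb{Z}[\lambda]$.

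Second, I would exploit the factorization $M_{3}^{g_{a}}(\lambda) = N_{3}(a, \lambda)^{2}$. As $\mathbb{Z}[\lambda]$ is a unique factorization domain and $N_{3}(a, \lambda)$ is monic, the polynomial $N_{3}(a, \lambda)^{2}$ splits into linear factors of $\mathbb{Z}[\lambda]$ if and only if $N_{3}(a, \lambda)$ does; hence $N_{3}(a, \lambda)$ has four integer roots, and so \[ \disc_{\lambda} N_{3}(a, \lambda) = 2^{12} 3^{12} D_{3}(a) \left( 4 a^{3} +12 a^{2} -3 a -27 \right)^{2} (a -3)^{4} (a +3)^{4} a^{12} \] is the square of an integer. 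If $a \in \lbrace -3, 0, 3 \rbrace$, then $g_{a}$ is a power map or a Chebyshev map, and we are done; so assume $a \in \mathbb{Z} \setminus \lbrace -3, 0, 3 \rbrace$. The cubic $4 a^{3} +12 a^{2} -3 a -27$ has no integer root, since its only candidate roots are the divisors of $27$, which are readily checked not to be roots; thus $\left( 4 a^{3} +12 a^{2} -3 a -27 \right)^{2} (a -3)^{4} (a +3)^{4} a^{12}$ is a nonzero perfect square, and as $2^{12} 3^{12}$ is a square as well, this forces $D_{3}(a)$ to be a perfect square.

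The remaining step~-- which I expect to be the main obstacle~-- is to show that $D_{3}(a)$ is never a perfect square for $a \in \mathbb{Z} \setminus \lbrace -3, 0, 3 \rbrace$. The method of squeezing a polynomial between two consecutive integer squares does not apply to $D_{3}$ itself, whose formal square root $2 a^{4} +4 a^{3} -\tfrac{51}{4} a^{2} +\cdots$ fails to have integer coefficients; the remedy is to apply it to $16 D_{3}(a)$ instead. Indeed, if $D_{3}(a) = m^{2}$, then $16 D_{3}(a) = (4 m)^{2}$, and with $P(a) = 8 a^{4} +16 a^{3} -51 a^{2} -104 a -68 \in \mathbb{Z}[a]$ one computes that $16 D_{3}(a) -P(a)^{2}$ and $(P(a) +1)^{2} -16 D_{3}(a)$ are polynomials in $a$ of degree $4$ with positive leading coefficients~-- namely $7$ and $9$ respectively~-- hence are strictly positive for $|a|$ larger than some explicit bound $A_{0}$. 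For such $a$ the integer $16 D_{3}(a)$ then lies strictly between the consecutive squares $P(a)^{2}$ and $(P(a) +1)^{2}$, so it is not a perfect square, and neither is $D_{3}(a)$. (One may further shrink $A_{0}$ by observing that $D_{3}(a) \equiv 4 a \pmod{16}$ for $a$ odd and $D_{3}(a) \in \lbrace 5, 13 \rbrace \pmod{16}$ for $a$ even, which already forces $a \equiv 1 \pmod{4}$ whenever $D_{3}(a)$ is a square.) Finally, one checks directly~-- for instance with SageMath~-- that $D_{3}(a)$ is not a perfect square for the finitely many integers $a$ with $|a| \leq A_{0}$. Therefore $a \in \lbrace -3, 0, 3 \rbrace$, and $f$ is a power map or a Chebyshev map.
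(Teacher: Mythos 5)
Your proof is correct and follows essentially the same strategy as the paper: reduce to $a \in \mathbb{Z}$, observe that $\disc_{\lambda} N_{3}(a, \lambda)$ must be a perfect square, deduce (after ruling out the other factors) that $D_{3}(a)$ must be a perfect square for $a \notin \lbrace -3, 0, 3 \rbrace$, and show this is impossible by squeezing a suitable quantity between consecutive squares of a polynomial outside a finite range plus a finite computer check. The one substantive variation is the device used to make the formal square root integral: you clear denominators by multiplying by $16$ and squeeze $16 D_{3}(a)$ between $P(a)^{2}$ and $(P(a) +1)^{2}$ with $P \in \mathbb{Z}[a]$, whereas the paper first deduces $a \equiv 1 \pmod{8}$ from the residue of $D_{3}(a)$ modulo $32$ (a stronger congruence than your mod-$16$ reduction to $a \equiv 1 \pmod{4}$), substitutes $a = 1 +8 b$, and squeezes $D_{3}(1 +8 b)$ between $L(b)^{2}$ and $(L(b) +1)^{2}$ with $L \in \mathbb{Z}[b]$. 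Both routes are sound, but the paper's substitution yields the very small residual range $b \in \lbrace -7, \dotsc, 13 \rbrace$, whereas your $A_{0}$ is of the order of a few hundred because of the large cubic coefficient in $16 D_{3}(a) -P(a)^{2} = 7 a^{4} -2416 a^{3} -\cdots$, so your residual check is heavier. Two small points: you should remark that $P(a) \geq 0$ for $\lvert a \rvert > A_{0}$ (this in fact follows automatically, since $P(a) \leq -1$ would make your two strict inequalities incompatible), so that $P(a)^{2}$ and $(P(a) +1)^{2}$ really are consecutive squares; on the other hand, your explicit check that $4 a^{3} +12 a^{2} -3 a -27$ has no integer root makes precise a step the paper leaves tacit.
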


\begin{proof}
There exists a parameter $a \in \mathbb{C}$ such that $f$ is affinely conjugate to $g_{a}$. By Proposition~\ref{proposition:proofs}, the polynomials $M_{n}^{g_{a}}$, with $n \in \lbrace 1, 2, 3 \rbrace$, split into linear factors of $\mathbb{Z}[\lambda]$, and hence $a$ is an integer and \[ \disc_{\lambda} N_{3}(a, \lambda) = 2^{12} 3^{12} D_{3}(a) \left( 4 a^{3} +12 a^{2} -3 a -27 \right)^{2} (a -3)^{4} (a +3)^{4} a^{12} \] is the square of an integer. Now, note that, if \[ D_{3}(a) = 4 a^{8} +16 a^{7} -35 a^{6} -206 a^{5} -113 a^{4} +376 a^{3} +715 a^{2} +1690 a +2197 \] is the square of an integer, then its residue class in $\mathbb{Z}/32 \mathbb{Z}$ is a square, and hence $a \equiv 1 \pmod{8}$. Moreover, observe that $D_{3}(1 +8 b)$ is not the square of an integer whenever $b \in \lbrace -7, \dotsc, 13 \rbrace$ and we have \[ L(b)^{2} < D_{3}(1 +8 b) < \left( L(b) +1 \right)^{2} \] for all $b \in \mathbb{Z} \setminus \lbrace -7, \dotsc, 13 \rbrace$, where \[ L(b) = 8192 a^{4} +6144 a^{3} +720 a^{2} -252 a -50 \, \text{.} \] Therefore, $D_{3}(a)$ is not the square of an integer, and hence $a \in \lbrace -3, 0, 3 \rbrace$. Thus, the proposition is proved.
\end{proof}

Using the software SageMath, we obtain that $D_{3}(a)$ is not the square of a rational number whenever $a$ is a rational number with height at most $10^{4}$. Thus, it seems likely that the question below has a negative answer, which would imply that every cubic polynomial map with symmetries that has a rational multiplier at each cycle with period less than or equal to $3$ is either a power map or a Chebyshev map.

\begin{question}
Does the hyperelliptic curve of genus $3$ over $\mathbb{Q}$ given by $b^{2} = D_{3}(a)$ have a rational point other than the two points at infinity?
\end{question}

\begin{remark}
Note that the curve of genus $1$ given by $N_{3}(a, \lambda) = 0$ together with the point $\left( \frac{9}{2}, \frac{1647}{4} \right)$ defines an elliptic curve $E$ over $\mathbb{Q}$. Using the software Magma, we obtain that its group of rational points $E(\mathbb{Q})$ is a free abelian group of rank $1$. In particular, there exist infinitely many parameters $a \in \mathbb{C}$ for which the map $g_{a}$ has a rational multiplier at each cycle with period $1$ or $2$ and at a cycle with period $3$. Another approach to proving that power maps and Chebyshev maps are the only cubic polynomial maps with symmetries that have a rational multiplier at each cycle with period less than or equal to $3$ could be to show that the group $E(\mathbb{Q})$ does not contain $4$ distinct points with the same $a$-coordinate.
\end{remark}

\section{Certain polynomials related to the dynamics of a polynomial map}
\label{section:polynomials}

We study here the dynamics of a polynomial map from an algebraic point of view. More precisely, we present certain results about the dynatomic polynomials and the multiplier polynomials of a monic polynomial. In particular, we shall prove here Proposition~\ref{proposition:proofs}, which is the key point in our proofs of Theorem~\ref{theorem:unicritical} and Theorem~\ref{theorem:symcubic}. The results presented in this section are mostly well known.

Fix an integer $d \geq 2$. Although we are interested in the dynamics of complex polynomial maps, we shall consider monic polynomials over an arbitrary integral domain in order to derive information about the coefficients of the dynatomic polynomials and the multiplier polynomials.

\subsection{Dynatomic polynomials}

Let us present here the dynatomic polynomials of a monic polynomial, which are related to its periodic points.

If $R$ is a commutative ring and $f \in R[z]$ is a monic polynomial of degree $d$, then, for every $n \geq 1$, the roots of the polynomial $f^{\circ n}(z) -z$ are precisely the periodic points for the map $f \colon R \rightarrow R$ with period dividing $n$. Thus, it is natural to try to factor these polynomials in order to separate the periodic points for $f \colon R \rightarrow R$ according to their periods.

In the particular case where $a_{0}, \dotsc, a_{d -1}$ are indeterminates over $\mathbb{Z}$, \[ R = \mathbb{Z}\left[ a_{0}, \dotsc, a_{d -1} \right] \quad \text{and} \quad f(z) = z^{d} +\sum_{j = 0}^{d -1} a_{j} z^{j} \in R[z] \, \text{,} \] the polynomials $f^{\circ n}(z) -z$, with $n \geq 1$, are separable, which allows us to factor them over $R$ according to the periods of their roots. By a specialization argument, this provides a factorization of the polynomials $f^{\circ n}(z) -z$, with $n \geq 1$, in the general case of a commutative ring $R$ and a monic polynomial $f \in R[z]$ of degree $d$ (compare~\cite[Section~2]{MP1994}). More precisely, we have the following result, where $\mu \colon \mathbb{Z}_{\geq 1} \rightarrow \lbrace -1, 0, 1 \rbrace$ denotes the M\"{o}bius function:

\begin{proposition}
\label{proposition:dynadef}
Suppose that $R$ is a commutative ring and $f \in R[z]$ is a monic polynomial of degree $d$. Then there exists a unique sequence $\left( \Phi_{n}^{f} \right)_{n \geq 1}$ of elements of $R[z]$ such that, for every $n \geq 1$, we have \[ f^{\circ n}(z) -z = \prod_{k \mid n} \Phi_{k}^{f}(z) \, \text{.} \] Furthermore, for every $n \geq 1$, the polynomial $\Phi_{n}^{f}$ is monic of degree $\nu(n)$, where \[ \nu(n) = \sum_{k \mid n} \mu\left( \frac{n}{k} \right) d^{k} \, \text{.} \]
\end{proposition}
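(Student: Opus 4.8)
The plan is to establish the factorization and degree formula simultaneously by induction on $n$, passing first through the universal case to obtain separability and then specializing. First I would treat the \emph{generic polynomial}: let $a_0,\dots,a_{d-1}$ be indeterminates over $\mathbb{Z}$, set $R_0 = \mathbb{Z}[a_0,\dots,a_{d-1}]$ and $f_0(z) = z^d + \sum_{j=0}^{d-1} a_j z^j \in R_0[z]$, and work inside a fixed algebraic closure $\overline{K_0}$ of the fraction field $K_0 = \operatorname{Frac}(R_0)$. The crucial input here is that for every $n \geq 1$ the polynomial $f_0^{\circ n}(z) - z \in R_0[z]$ is separable; I would verify this by a specialization argument, exhibiting for each $n$ a single complex polynomial $g$ of degree $d$ for which $g^{\circ n}(z) - z$ has $d^n$ distinct roots (for instance $z \mapsto z^d + c$ for a suitable transcendental or generic $c$, whose periodic points of period dividing $n$ can be counted), so that the discriminant of $f_0^{\circ n}(z) - z$, being a polynomial in the $a_j$ that does not vanish under this specialization, is nonzero in $R_0$.

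Granting separability in the generic case, I would define, for each root $\zeta \in \overline{K_0}$ of $f_0^{\circ n}(z) - z$, its \emph{exact period} to be the least $k \geq 1$ with $f_0^{\circ k}(\zeta) = \zeta$; since $f_0^{\circ n}(\zeta) = \zeta$ forces this $k$ to divide $n$, the root set of $f_0^{\circ n}(z) - z$ partitions according to exact period, and for $k \mid n$ the roots of exact period $k$ are exactly the roots of $f_0^{\circ k}(z)-z$ of exact period $k$, independent of $n$. Define $\Phi_n^{f_0}(z) \in \overline{K_0}[z]$ to be the monic polynomial whose roots, with multiplicity one (by separability), are the roots of $f_0^{\circ n}(z) - z$ of exact period exactly $n$. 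Then by construction $f_0^{\circ n}(z) - z = \prod_{k \mid n} \Phi_k^{f_0}(z)$ as an identity in $\overline{K_0}[z]$. To see that $\Phi_n^{f_0} \in R_0[z]$, I would argue by strong induction on $n$: the product $\prod_{k \mid n,\, k < n} \Phi_k^{f_0}(z)$ lies in $R_0[z]$ by the inductive hypothesis and is monic, hence divides $f_0^{\circ n}(z) - z \in R_0[z]$ with quotient in $R_0[z]$ (polynomial division by a monic polynomial over a commutative ring stays in the ring), and that quotient is $\Phi_n^{f_0}$. The degree formula $\deg \Phi_n^{f_0} = \nu(n) = \sum_{k \mid n} \mu(n/k) d^k$ then follows from $\sum_{k \mid n} \deg \Phi_k^{f_0} = d^n$ for all $n$ by Möbius inversion. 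Uniqueness in the generic case is immediate: any two such sequences agree at $n=1$ (both equal $f_0(z) - z$) and then inductively $\Phi_n^{f_0}$ is determined as the quotient of $f_0^{\circ n}(z) - z$ by $\prod_{k \mid n,\, k<n}\Phi_k^{f_0}(z)$.

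Finally I would descend to an arbitrary commutative ring $R$ and monic $f(z) = z^d + \sum_{j=0}^{d-1} b_j z^j \in R[z]$ of degree $d$ by the specialization homomorphism $\varphi \colon R_0 \to R$, $a_j \mapsto b_j$, extended coefficientwise to $R_0[z] \to R[z]$. Since $\varphi$ commutes with composition of monic polynomials and with products, setting $\Phi_n^f := \varphi(\Phi_n^{f_0})$ gives monic polynomials in $R[z]$ of degree $\nu(n)$ satisfying $f^{\circ n}(z) - z = \prod_{k \mid n} \Phi_k^f(z)$, and uniqueness again follows by the same inductive division argument (monic divisors give unique quotients over any commutative ring). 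I expect the main obstacle to be the separability claim in the generic case — everything else is formal bookkeeping with Möbius inversion and division by monic polynomials — and the cleanest route to it is the specialization-to-one-example trick just described, reducing a statement about the universal ring to a single explicit count of periodic points of a concrete complex polynomial; alternatively one can cite the corresponding statement in~\cite{MP1994}.
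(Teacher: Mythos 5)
Your proof is correct and follows the same route the paper alludes to: pass to the universal polynomial over $\mathbb{Z}[a_0,\dotsc,a_{d-1}]$, establish separability of $f_0^{\circ n}(z)-z$ there, define each $\Phi_n^{f_0}$ by exact period over an algebraic closure, use monic division to pull it down to $\mathbb{Z}[a_0,\dotsc,a_{d-1}][z]$, and then specialize and apply M\"obius inversion for the degree formula --- which is precisely the Morton--Patel argument the paper cites for this proposition rather than proving it in-house. One small tidying: for the separability specialization you can simply take $c = 0$, i.e.\ $f_0(z) = z^{d}$, so that $f_0^{\circ n}(z) - z = z^{d^{n}} - z$ visibly has $d^{n}$ distinct complex roots (namely $0$ and the $(d^{n}-1)$th roots of unity); invoking a ``transcendental or generic $c$'' is itself a claim that would need justification, whereas the power map gives the nonvanishing of the discriminant immediately.
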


\begin{definition}
Suppose that $R$ is a commutative ring and $f \in R[z]$ is a monic polynomial of degree $d$. For $n \geq 1$, the polynomial $\Phi_{n}^{f}$ is called the $n$th \emph{dynatomic polynomial} of $f$.
\end{definition}

\begin{remark}
If $R$ is an integral domain and $f \in R[z]$ is a monic polynomial of degree $d$, then it follows from the M\"{o}bius inversion formula that, for every $n \geq 1$, we have \[ \Phi_{n}^{f}(z) = \prod_{k \mid n} \left( f^{\circ k}(z) -z \right)^{\mu\left( \frac{n}{k} \right)} \, \text{.} \]
\end{remark}

\begin{remark}
If $R$ is a commutative ring and $f \in R[z]$ is a nonmonic polynomial of degree $d$, then the existence of a sequence $\left( \Phi_{n}^{f} \right)_{n \geq 1}$ as in Proposition~\ref{proposition:dynadef} holds but the uniqueness may fail when $R$ is not an integral domain. For example, if \[ R = \mathbb{Z}/4 \mathbb{Z} \quad \text{and} \quad f(z) = 2 z^{2} +z \in R[z] \, \text{,} \] then we have \[ f^{\circ n}(z) = \begin{cases} 2 z^{2} +z & \text{if } n \text{ is odd}\\ z & \text{if } n \text{ is even} \end{cases} \] for all $n \geq 1$, and hence every sequence $\left( \Phi_{n}^{f} \right)_{n \geq 1}$ of elements of $R[z]$ satisfying \[ \Phi_{1}^{f}(z) = 2 z^{2} \, \text{,} \quad \Phi_{2}^{f} \in 2 R[z] \quad \text{and} \quad \Phi_{n}^{f}(z) = 1 \quad \text{for} \quad n \geq 3 \] is such that, for every $n \geq 1$, we have \[ f^{\circ n}(z) -z = \prod_{k \mid n} \Phi_{k}^{f}(z) \, \text{.} \]
\end{remark}

Let us now describe the roots of the dynatomic polynomials. If $R$ is a commutative ring, $f \in R[z]$ is a monic polynomial of degree $d$ and $n \geq 1$, then each root of the polynomial $\Phi_{n}^{f}$ is a periodic point for the map $f \colon R \rightarrow R$ with period dividing $n$ since it is also a root of the polynomial $f^{\circ n}(z) -z$. Conversely, if $R$ is an integral domain, then each periodic point for $f \colon R \rightarrow R$ with period $n$ is a root of $\Phi_{n}^{f}$. However, it may occur that roots of $\Phi_{n}^{f}$ have period less than $n$. More precisely, we have the following result:

\begin{proposition}[{\cite[Proposition~3.2]{MS1995}}]
\label{proposition:dynaroots}
Assume that $R$ is an integral domain, $f \in R[z]$ is a monic polynomial of degree $d$ and $n \geq 1$. Then $z_{0} \in R$ is a root of the polynomial $\Phi_{n}^{f}$ if and only if $z_{0}$ is a periodic point for $f \colon R \rightarrow R$ with period $k \geq 1$ and multiplier $\lambda_{0} \in R$ that satisfy 
\begin{itemize}
\item $k = n$,
\item or there exists an integer $l \geq 1$ such that $n = k l$ and $\lambda_{0}$ is a primitive $l$th root of unity,
\item or $R$ has characteristic $p > 0$ and there exist integers $l, m \geq 1$ such that $n = k l p^{m}$ and $\lambda_{0}$ is a primitive $l$th root of unity.
\end{itemize}
\end{proposition}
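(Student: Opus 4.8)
The plan is to reduce the statement to a local computation of orders of vanishing at $z_{0}$, and then to recover $\ord_{z_{0}}\bigl( \Phi_{n}^{f} \bigr)$ from these via M\"{o}bius inversion. Since $R$ is an integral domain, for every nonzero $P \in R[z]$ the order $\ord_{z_{0}}(P)$ of vanishing of $P$ at $z_{0}$ is well-defined and additive on products. Because $\Phi_{n}^{f}$ divides $f^{\circ n}(z) -z$ by Proposition~\ref{proposition:dynadef}, any root $z_{0}$ of $\Phi_{n}^{f}$ satisfies $f^{\circ n}(z_{0}) = z_{0}$, so $z_{0}$ is periodic of some period $k$ with $k \mid n$; each of the three alternatives in the statement also forces $k \mid n$. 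So I assume $k \mid n$, set $M = n/k$, and let $\lambda_{0} = (f^{\circ k})^{\prime}(z_{0}) \in R$ and $a_{m} = \ord_{z_{0}}\bigl( f^{\circ k m}(z) -z \bigr)$ for $m \geq 1$. Taking orders of vanishing in $f^{\circ j}(z) -z = \prod_{i \mid j} \Phi_{i}^{f}(z)$ gives $\ord_{z_{0}}\bigl( f^{\circ j}(z) -z \bigr) = \sum_{i \mid j} \ord_{z_{0}}\bigl( \Phi_{i}^{f} \bigr)$ for every $j \geq 1$, so the M\"{o}bius inversion formula, together with the vanishing of the terms with $k \nmid j$, yields \[ \ord_{z_{0}}\bigl( \Phi_{n}^{f} \bigr) = \sum_{j \mid n} \mu(n/j) \, \ord_{z_{0}}\bigl( f^{\circ j}(z) -z \bigr) = \sum_{m \mid M} \mu(M/m) \, a_{m} \, \text{.} \] Thus $z_{0}$ is a root of $\Phi_{n}^{f}$ exactly when this nonnegative integer is positive, and everything reduces to computing the numbers $a_{m}$.

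\emph{The local lemma.} Put $G(w) = f^{\circ k}(z_{0} +w) -z_{0}$, so that $G(w) = \lambda_{0} w +O(w^{2})$ and $a_{m}$ equals the order at $w = 0$ of $G^{\circ m}(w) -w$, which is just $f^{\circ k m}(z) -z$ written in the coordinate $w = z -z_{0}$. Iterating gives $G^{\circ m}(w) = \lambda_{0}^{m} w +O(w^{2})$, so $a_{m} = 1$ whenever $\lambda_{0}^{m} \neq 1$. Suppose now that $\lambda_{0}$ is a root of unity of exact order $l$, so that $\lambda_{0}^{m} = 1$ if and only if $l \mid m$. When $l \mid m$, write $m = l q$ and $H = G^{\circ l}$, a series tangent to the identity: $H(w) = w +c\, w^{s +1} +O(w^{s +2})$ with $s \geq 1$ and $c \neq 0$, where $s$ is finite because $H(w) -w$ is the nonzero polynomial $f^{\circ k l}(z) -z$ in the coordinate $w$. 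The standard iteration formula for series tangent to the identity gives $H^{\circ q}(w) = w +q c\, w^{s +1} +O(w^{s +2})$; hence the order of $H^{\circ q}(w) -w$ is $s +1$ whenever $q$ is nonzero in $R$ — always in characteristic $0$, and in characteristic $p$ whenever $p \nmid q$ — whereas in characteristic $p$ the order of $H^{\circ p}(w) -w$ is at least $s +2$. Applying this successively to $G^{\circ l}, G^{\circ l p}, G^{\circ l p^{2}}, \dotsc$ — all tangent to the identity and none equal to $w \mapsto w$, since $f^{\circ k l p^{i}}(z) -z$ is a nonzero polynomial — one finds that the integers $b_{i} := \ord_{z_{0}}\bigl( f^{\circ k l p^{i}}(z) -z \bigr) -1$ satisfy $1 \leq b_{0} < b_{1} < b_{2} < \dotsb$ and that $a_{m} = 1 +b_{\,v_{p}(m/l)}$ whenever $l \mid m$, where $v_{p}$ denotes the $p$-adic valuation (to be read as identically $0$ in characteristic $0$). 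Setting $b_{-1} = 0$ and $v(m) = v_{p}(m/l)$ when $\lambda_{0}$ is a root of unity whose order $l$ divides $m$, and $v(m) = -1$ otherwise, this reads uniformly as $a_{m} = 1 +b_{v(m)}$ for every $m \geq 1$.

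\emph{Assembly.} Substituting $a_{m} = 1 +b_{v(m)}$ into the displayed identity and splitting off the constant part gives $\ord_{z_{0}}\bigl( \Phi_{n}^{f} \bigr) = \sum_{m \mid M}\mu(M/m) +\sum_{m \mid M}\mu(M/m)\, b_{v(m)}$, where the first sum equals $1$ if $M = 1$ and $0$ otherwise. In the second sum, write $b_{v(m)} = \sum_{0 \leq i \leq v(m)}(b_{i} -b_{i -1})$ and interchange the order of summation: each difference $b_{i} -b_{i -1}$, which is positive, is multiplied by $\sum_{m \mid M,\ l p^{i} \mid m}\mu(M/m)$, and this last sum equals $1$ if $M = l p^{i}$ and $0$ otherwise; hence the second sum vanishes unless $\lambda_{0}$ is a root of unity of order $l$ with $M = l p^{i}$ for some $i \geq 0$, in which case it equals the positive integer $b_{i} -b_{i -1}$. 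Both sums being nonnegative, $\ord_{z_{0}}\bigl( \Phi_{n}^{f} \bigr)$ is positive precisely when $M = 1$, or $\lambda_{0}$ is a root of unity of order $l$ and $M = l p^{i}$ for some $i \geq 0$. Recalling that $M = n/k$ and that being a root of unity of order $l$ is the same as being a primitive $l$th root of unity, we conclude that $z_{0}$ is a root of $\Phi_{n}^{f}$ if and only if $n = k$, or $\lambda_{0}$ is a primitive $l$th root of unity with $n = k l$, or $R$ has characteristic $p > 0$ and $\lambda_{0}$ is a primitive $l$th root of unity with $n = k l p^{i}$ for some $i \geq 1$ — which is exactly the trichotomy in the statement.

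I expect the one genuinely delicate ingredient to be the local lemma, and within it the behaviour of iterates of a series tangent to the identity — especially the strict growth of the order of contact under $p$th iterates in characteristic $p$. One should also check that these power series manipulations are legitimate over the integral domain $R$ (they are), and that none of the iterates $f^{\circ k l p^{i}}$ restricts to the identity near $z_{0}$, which is impossible since these polynomials have degree at least $2$. The chain-rule computation giving $\lambda_{0}^{m}$, the M\"{o}bius inversion, and the concluding divisor bookkeeping are all routine.
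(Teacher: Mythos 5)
The paper cites this result to Morton--Silverman without reproving it, so there is no in-paper argument to compare against; your proof is correct, and it is, in substance, the standard local-analysis argument for this proposition. The reduction by M\"{o}bius inversion to the local orders $a_{m} = \ord_{z_{0}}\bigl( f^{\circ k m}(z) -z \bigr)$ is sound (and you correctly note that $\ord_{z_{0}}$ is additive over products because $R$ is an integral domain), and the local lemma is the genuinely delicate step, which you handle correctly: if $\lambda_{0}^{m} \neq 1$ then $a_{m} = 1$; if $\lambda_{0}$ has finite multiplicative order $l$ and $l \mid m$, then $G^{\circ l}$ is tangent to the identity with some contact order $s +1 \geq 2$, the leading nonidentity coefficient is multiplied by $q$ under $q$-fold iteration, and hence in characteristic $p$ the contact order jumps strictly exactly when one passes to the $p$th iterate. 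You also check the two points that make the power-series manipulations legitimate here: the formulas hold over any integral domain, not just a field of characteristic $0$, and none of the polynomials $f^{\circ k l p^{i}}(z) -z$ vanishes identically, so all the relevant orders are finite. The concluding bookkeeping -- telescoping $b_{v(m)}$ and interchanging sums so that $\sum_{m \mid M} \mu(M/m)\, b_{v(m)}$ collapses to $b_{i} -b_{i -1} > 0$ precisely when $M = l p^{i}$ -- is correct. One small point you use implicitly and might state: over an integral domain of characteristic $p$, a primitive $l$th root of unity automatically has $p \nmid l$ (since $x^{p} -1 = (x -1)^{p}$), so the alternatives $M = l$ and $M = l p^{m}$ with $m \geq 1$ do not overlap and match the second and third bullets of the statement exactly.
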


\begin{remark}
If $R$ is a commutative ring that is not an integral domain, $f \in R[z]$ is a monic polynomial of degree $d$ and $n \geq 1$, then it may occur that periodic points for $f \colon R \rightarrow R$ with period $n$ are not roots of the polynomial $\Phi_{n}^{f}$. For example, if \[ R = \mathbb{Z}/4 \mathbb{Z} \quad \text{and} \quad f(z) = z^{2} +3 z +2 \in R[z] \, \text{,} \] then $0 \in R$ is a periodic point for $f$ with period $2$ but is not a root of the polynomial \[ \Phi_{2}^{f}(z) = z^{2} +2 \in R[z] \, \text{.} \]
\end{remark}

Now, let us focus on the family $\left( f_{c} \right)_{c \in \mathbb{C}}$ of unicritical polynomial maps given by \[ f_{c}(z) = z^{d} +c \, \text{.} \]

Given a parameter $c \in \mathbb{C}$, every cycle of parabolic basins for $f_{c}$ contains the unique critical point $0 \in \mathbb{C}$, which allows us to determine the multiplicities of the roots of the polynomials $\Phi_{n}^{f_{c}}$, with $n \geq 1$. Thus, we have the result below, which is a stronger version of Proposition~\ref{proposition:dynaroots} in the case of the polynomial $f_{c}$.

\begin{proposition}[{\cite[Proposition~2.2]{BL2014}}]
\label{proposition:dynarootsuni}
Assume that $c \in \mathbb{C}$ and $n \geq 1$. Then $z_{0} \in \mathbb{C}$ is a root of the polynomial $\Phi_{n}^{f_{c}}$ if and only if 
\begin{itemize}
\item either $z_{0}$ is a periodic point for $f_{c}$ with period $n$ and multiplier different from $1$, in which case $\ord_{z_{0}} \Phi_{n}^{f_{c}} = 1$,
\item or $z_{0}$ is a periodic point for $f_{c}$ with period $n$ and multiplier $1$, in which case $\ord_{z_{0}} \Phi_{n}^{f_{c}} = 2$,
\item or $z_{0}$ is a periodic point for $f_{c}$ with period a proper divisor $k$ of $n$ and multiplier a primitive $\frac{n}{k}$th root of unity, in which case $\ord_{z_{0}} \Phi_{n}^{f_{c}} = \frac{n}{k}$.
\end{itemize}
\end{proposition}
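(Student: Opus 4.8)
The plan is to obtain the description of the roots of $\Phi_{n}^{f_{c}}$ directly from Proposition~\ref{proposition:dynaroots}, and then to compute each multiplicity $\ord_{z_{0}} \Phi_{n}^{f_{c}}$ by combining the identity $f_{c}^{\circ n}(z) -z = \prod_{m \mid n} \Phi_{m}^{f_{c}}(z)$ of Proposition~\ref{proposition:dynadef} with a local analysis of $f_{c}$ at its parabolic points.

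For the characterization of the roots: since $\mathbb{C}$ has characteristic $0$, the third alternative in Proposition~\ref{proposition:dynaroots} cannot occur, so $z_{0} \in \mathbb{C}$ is a root of $\Phi_{n}^{f_{c}}$ if and only if $z_{0}$ is periodic for $f_{c}$ of some period $k \mid n$ whose multiplier $\lambda_{0}$ satisfies either $k = n$, or $k < n$ and $\lambda_{0}$ is a primitive $\frac{n}{k}$th root of unity; splitting the case $k = n$ according to whether $\lambda_{0} = 1$ yields exactly the three listed alternatives, which are moreover mutually exclusive. It then remains to compute $\ord_{z_{0}} \Phi_{n}^{f_{c}}$ in each case. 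Fixing the period $k \mid n$ and the multiplier $\lambda_{0}$ of $z_{0}$ and applying Proposition~\ref{proposition:dynaroots} again, one checks that, among the divisors $m$ of $n$, the polynomial $\Phi_{m}^{f_{c}}$ vanishes at $z_{0}$ only for $m = n$, and --- when $k < n$ --- also for $m = k$. By the product formula, $\ord_{z_{0}}\bigl( f_{c}^{\circ n}(z) -z \bigr)$ therefore equals $\ord_{z_{0}} \Phi_{n}^{f_{c}}$ if $k = n$ and equals $\ord_{z_{0}} \Phi_{k}^{f_{c}} +\ord_{z_{0}} \Phi_{n}^{f_{c}}$ if $k < n$; in the latter case $\bigl( f_{c}^{\circ k} \bigr)^{\prime}(z_{0}) -1 = \lambda_{0} -1 \neq 0$ since $\frac{n}{k} \geq 2$, so $\ord_{z_{0}} \Phi_{k}^{f_{c}} = \ord_{z_{0}}\bigl( f_{c}^{\circ k}(z) -z \bigr) = 1$. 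Thus the computation reduces, in every case, to that of $\ord_{z_{0}}\bigl( f_{c}^{\circ n}(z) -z \bigr)$.

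If $z_{0}$ has period $n$ with multiplier $\neq 1$, this order is $1$ because the derivative of $f_{c}^{\circ n}(z) -z$ at $z_{0}$ is $\lambda_{0} -1 \neq 0$, whence $\ord_{z_{0}} \Phi_{n}^{f_{c}} = 1$; unicriticality is not needed here. In the remaining two alternatives $z_{0}$ is a parabolic periodic point, and this is where unicriticality matters. Put $h = f_{c}^{\circ k}$ and $l = \frac{n}{k}$, so $l = 1$ in the second alternative and $l \geq 2$ in the third; then $h$ fixes $z_{0}$ with multiplier a primitive $l$th root of unity, hence $f_{c}^{\circ n} = h^{\circ l}$ fixes $z_{0}$ with multiplier $1$, and by the Leau--Fatou flower theorem $\ord_{z_{0}}\bigl( f_{c}^{\circ n}(z) -z \bigr) = q +1$, where $q \geq 1$ is the number of attracting petals of $h^{\circ l}$ at $z_{0}$. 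A standard argument shows that $l$ divides $q$, that $h$ cyclically permutes those $q$ petals in $\frac{q}{l}$ orbits of length $l$, and hence that $f_{c}$ permutes the $k q$ attracting petals lying over the cycle of $z_{0}$ in $\frac{q}{l}$ orbits of length $n$, each of which gives rise to a cycle of immediate parabolic basins for $f_{c}$. Now every cycle of immediate parabolic basins of $f_{c}$ must contain a critical point of $f_{c}$ in $\widehat{\mathbb{C}}$; since the only critical points of $f_{c}$ are $0$ and $\infty$, and $\infty$ lies in the super-attracting basin of $\infty$, there can be at most one such cycle, so $\frac{q}{l} = 1$, i.e. $q = l$, and $\ord_{z_{0}}\bigl( f_{c}^{\circ n}(z) -z \bigr) = l +1$. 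Subtracting $\ord_{z_{0}} \Phi_{k}^{f_{c}}$, which is $0$ when $k = n$ and $1$ when $k < n$, gives $\ord_{z_{0}} \Phi_{n}^{f_{c}} = 2$ in the second alternative and $\ord_{z_{0}} \Phi_{n}^{f_{c}} = l = \frac{n}{k}$ in the third.

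The one delicate point is the parabolic case: one must determine the petal number $q$ exactly, not merely modulo $l$. This is precisely where the unicriticality hypothesis enters, through the classical fact that each cycle of immediate parabolic basins contains a critical point, and it relies on the combinatorial bookkeeping relating $q$ to the number $\frac{q}{l}$ of cycles of immediate basins around the cycle of $z_{0}$. Everything else is either immediate from Proposition~\ref{proposition:dynaroots} together with the product formula, or a one-line derivative computation.
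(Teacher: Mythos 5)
Your proof is correct and follows exactly the approach the paper indicates: the paper does not prove Proposition~\ref{proposition:dynarootsuni} itself but cites \cite[Proposition~2.2]{BL2014} with the one-line remark that every cycle of parabolic basins for $f_{c}$ contains the unique critical point $0 \in \mathbb{C}$, and your argument fills in precisely that proof, reducing via the product formula of Proposition~\ref{proposition:dynadef} to the multiplicity of $z_{0}$ as a zero of $f_{c}^{\circ n}(z) -z$ and then using the Leau--Fatou petal count together with unicriticality to force $q = l$.
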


Now, view $c$ as an indeterminate over $\mathbb{Z}$ and consider \[ \boldsymbol{R} = \mathbb{Z}[c] \quad \text{and} \quad \boldsymbol{f}(z) = z^{d} +c \in \boldsymbol{R}[z] \, \text{.} \] For $n \geq 1$, define $\Phi_{n} \in \mathbb{Z}[c, z]$ to be the image of $\Phi_{n}^{\boldsymbol{f}}$ under the canonical ring isomorphism from $\boldsymbol{R}[z]$ to $\mathbb{Z}[c, z]$.

By the uniqueness in Proposition~\ref{proposition:dynadef}, for every $c \in \mathbb{C}$ and every $n \geq 1$, we have \[ \Phi_{n}^{f_{c}}(z) = \Phi_{n}(c, z) \in \mathbb{C}[z] \, \text{.} \] In particular, the coefficients of the polynomials $\Phi_{n}^{f_{c}}$, with $n \geq 1$, are polynomials in $c$ with integer coefficients.

Finally, let us state the result below due to Bousch. It has also been proved with different approaches by Buff and Tan (see~\cite[Theorem~1.2]{BL2014}), Morton (see~\cite[Corollary~1]{M1996}) and Schleicher (see~\cite[Theorem~7.1]{S2017}).

\begin{proposition}[{\cite[Chapitre~3, Th\'{e}or\`{e}me~1]{B1992}}]
\label{proposition:dynairr}
For every $n \geq 1$, the polynomial $\Phi_{n}$ is irreducible over $\mathbb{C}$.
\end{proposition}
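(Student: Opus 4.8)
The plan is to reformulate the statement as a transitivity property of a monodromy group. By Proposition~\ref{proposition:dynadef} the polynomial $\Phi_{n} \in \mathbb{Z}[c, z]$ is monic of degree $\nu(n) \geq 1$ in $z$; by Proposition~\ref{proposition:dynarootsuni} there are only finitely many $c \in \mathbb{C}$ for which $\Phi_{n}^{f_{c}}$ fails to have $\nu(n)$ distinct roots, so $\disc_{z} \Phi_{n} \in \mathbb{C}[c]$ is not identically zero; and since $\Phi_{n}$ is monic in $z$, Gauss's lemma reduces the irreducibility of $\Phi_{n}$ over $\mathbb{C}$ to its irreducibility over the field $\mathbb{C}(c)$. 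The latter is equivalent to the connectedness of the degree-$\nu(n)$ branched cover $\pi \colon \lbrace \Phi_{n} = 0 \rbrace \rightarrow \mathbb{C}$, $(c, z) \mapsto c$; so, writing $D \subset \mathbb{C}$ for the (finite) zero locus of $\disc_{z} \Phi_{n}$, it suffices to prove that the monodromy group $G$ of the unramified covering $\pi$ over $\mathbb{C} \setminus D$ acts transitively on a generic fiber $V_{n}$, namely the set of $\nu(n)$ periodic points of $f_{c}$ with exact period $n$.

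The key structural point is that $f_{c}$ has coefficients in the base $\mathbb{C}[c]$, so every element of $G$ commutes with the action of $f_{c}$ on $V_{n}$. Hence $G$ permutes the $N := \nu(n)/n$ cycles of $f_{c}$ with period $n$ and is contained in the centralizer of a product of $N$ disjoint $n$-cycles, that is, in the wreath product $\mathbb{Z}/n\mathbb{Z} \wr \mathrm{Sym}_{N}$. A subgroup of $\mathbb{Z}/n\mathbb{Z} \wr \mathrm{Sym}_{N}$ is transitive on $V_{n}$ if and only if (i) it acts transitively on the $N$ cycles and (ii) for some (equivalently, every) cycle the setwise stabilizer acts transitively on its $n$ points. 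I would establish (i) and (ii) separately, the second being easy once the first is known.

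For (ii) I would use the parabolic parameters of period~$1$. Given a primitive $n$th root of unity $\zeta$, solving $z = z^{d} + c$ together with $d z^{d-1} = \zeta$ yields finitely many parameters $c_{\ast} \in \mathbb{C}$ at which $f_{c_{\ast}}$ has a fixed point $w$ with multiplier $\zeta$; by Proposition~\ref{proposition:dynarootsuni} (the case of a period-$1$ point whose multiplier is a primitive $n$th root of unity) one has $\ord_{w} \Phi_{n}^{f_{c_{\ast}}} = n$, so $c_{\ast} \in D$, and the classical parabolic-bifurcation picture shows that the $n$ roots of $\Phi_{n}^{f_{c}}$ collapsing onto $w$ as $c \to c_{\ast}$ form, for $c$ near $c_{\ast}$, a single period-$n$ cycle $C_{0}$, cyclically permuted by the local monodromy around $c_{\ast}$. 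That local monodromy therefore induces an element of $G$ which maps $C_{0}$ to itself and restricts to a full $n$-cycle on $C_{0}$, so $\mathrm{Stab}_{G}(C_{0})$ acts transitively on $C_{0}$. Granting (i), conjugating this element by some $g \in G$ carrying $C_{0}$ to an arbitrary cycle gives (ii) in general, whence $G$ is transitive on $V_{n}$ and $\Phi_{n}$ is irreducible. (As a check one can analyse $c = \infty$ instead: a period-$n$ point satisfies $f_{c}^{\circ i}(z) \sim \xi_{i} (-c)^{1/d}$ as $c \to \infty$, where $\left( \xi_{i} \right)_{i \in \mathbb{Z}/n\mathbb{Z}}$ runs over the aperiodic words in the group of $d$th roots of unity, and the monodromy around $\infty$ multiplies every $\xi_{i}$ by a fixed primitive $d$th root of unity; this already settles $n = 1$.)

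The genuine difficulty is (i): transitivity of $G$ on the $N$ cycles, equivalently irreducibility of the curve $\mathcal{Y}_{n}$ parametrizing pairs $\left( c, \text{a period-}n\text{ cycle of } f_{c} \right)$. The ramification points of $\mathcal{Y}_{n} \to \mathbb{C}$ are exactly the parameters at which $f_{c}$ has a period-$n$ cycle with multiplier~$1$: by Proposition~\ref{proposition:dynaroots} and Proposition~\ref{proposition:dynarootsuni} a collision of two distinct period-$n$ cycles forces the limiting cycle to have multiplier~$1$, whereas at a parameter where a lower-period cycle acquires a root-of-unity multiplier only one period-$n$ cycle degenerates. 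Each such ramification parameter is the root of a hyperbolic component of period $n$ and contributes to $G$ a transposition of two period-$n$ cycles, so (i) comes down to proving that the graph on the $N$ cycles, in which two cycles are joined whenever they collide at such a parameter, is connected. This is precisely the step where the global geometry of parameter space must be invoked --- the connectedness of the Multibrot set $\mathcal{M}_{d}$ together with the combinatorics of bifurcations linking every period-$n$ hyperbolic component back to the period-$1$ component --- and it is the substance of Bousch's proof, as well as of the alternative proofs by Buff and Tan, by Morton, and by Schleicher. I do not see a way around this input, and it is the part I expect to be the main obstacle.
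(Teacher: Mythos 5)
The paper does not prove Proposition~\ref{proposition:dynairr}: it is imported from Bousch's thesis, with pointers to the independent proofs of Buff--Tan, Morton, and Schleicher, so there is no in-paper argument to compare against and the only question is whether your sketch is itself a proof. It is not, and you say so yourself. The framing is right: Gauss's lemma reduces to irreducibility over $\mathbb{C}(c)$, hence to transitivity of the monodromy group $G$ of the branched cover $\lbrace \Phi_{n} = 0 \rbrace \to \mathbb{C}$, and since $G$ commutes with the $f_{c}$-action on a fiber it lies in the wreath product $\mathbb{Z}/n\mathbb{Z} \wr \mathrm{Sym}_{N}$ with $N = \nu(n)/n$, so transitivity factors into transitivity on the $N$ cycles and transitivity of a block stabilizer on its block. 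Your argument for the block piece, via a period-$1$ satellite parameter, is also in the right spirit: the local monodromy around such a parabolic parameter shifts the bifurcating $n$-cycle by a residue coprime to $n$ and hence generates the full cyclic block stabilizer. But even this rests on a non-trivial local fact (that the dynatomic curve has a single branch of ramification index exactly $n$ over a satellite parameter, equivalently that the local monodromy really is an $n$-cycle and not a proper power) which you invoke as ``the classical parabolic-bifurcation picture'' rather than establish.

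The genuine gap is step (i), transitivity on the $N$ cycles, and you flag it honestly. Proving that the collision graph --- the graph on period-$n$ cycles with an edge whenever two of them merge at a parameter where a period-$n$ cycle has multiplier $1$ --- is connected is not soft: mere connectedness of the Multibrot set does not suffice, and one needs the global bifurcation combinatorics that chains every period-$n$ hyperbolic component back to the main one, or some equivalent global input such as the Puiseux analysis of periodic points near $c = \infty$. That is precisely the content of the four cited proofs, so what you have written is an accurate roadmap of the monodromy strategy that stops exactly where the real work begins; as it stands it reduces the statement to an unproven assertion of comparable depth.
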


\subsection{Multiplier polynomials}

We shall now present the multiplier polynomials of a monic polynomial, which play a crucial role in our proofs of Theorem~\ref{theorem:unicritical} and Theorem~\ref{theorem:symcubic}.

Note that, if $a_{0}, \dotsc, a_{d -1}$ are indeterminates over $\mathbb{Z}$, \[ R = \mathbb{Z}\left[ a_{0}, \dotsc, a_{d -1} \right] \quad \text{and} \quad f(z) = z^{d} +\sum_{j = 0}^{d -1} a_{j} z^{j} \in R[z] \, \text{,} \] then, for every $n \geq 1$, we have \[ \res_{z}\left( \Phi_{n}^{f}(z), \lambda -\left( f^{\circ n} \right)^{\prime}(z) \right) = \prod_{j = 1}^{\nu(n)} \left( \lambda -\left( f^{\circ n} \right)^{\prime}\left( z_{j} \right) \right) \, \text{,} \] where $z_{1}, \dotsc, z_{\nu(n)}$ are the roots of the polynomial $\Phi_{n}^{f}$ in an algebraic closure $K$ of the field of fractions of $R$. Since the map $f \colon K \rightarrow K$ has the same multiplier at each point of a cycle and the roots of $\Phi_{n}^{f}$ in $K$ are simple and are precisely the periodic points for $f \colon K \rightarrow K$ with period $n$, it follows that the polynomial \[ \res_{z}\left( \Phi_{n}^{f}(z), \lambda -\left( f^{\circ n} \right)^{\prime}(z) \right) \in R[\lambda] \] is the $n$th power of some monic polynomial in $R[\lambda]$. By a specialization argument, the same is true in the general case of a commutative ring $R$ and a monic polynomial $f \in R[z]$ of degree $d$ (compare~\cite[Section~5]{MP1994}). More precisely, we have the following result:

\begin{proposition}
\label{proposition:multdef}
Suppose that $R$ is an integral domain and $f \in R[z]$ is a monic polynomial of degree $d$. Then, for every $n \geq 1$, there exists a unique monic polynomial $M_{n}^{f} \in R[\lambda]$ that satisfies \[ M_{n}^{f}(\lambda)^{n} = \res_{z}\left( \Phi_{n}^{f}(z), \lambda -\left( f^{\circ n} \right)^{\prime}(z) \right) \, \text{.} \] Furthermore, the polynomial $M_{n}^{f}$ has degree $\frac{\nu(n)}{n}$.
\end{proposition}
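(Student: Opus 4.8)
The plan is to prove the identity first in a universal setting where the dynatomic polynomials are separable, and then to descend to the general case by specialization; uniqueness is the easy half. For uniqueness, I would embed the integral domain $R$ into an algebraically closed field and use that a monic polynomial in $\lambda$ over such a field has a unique monic $n$th root (it is determined by dividing by $n$ the multiplicity of each of its roots); hence at most one monic $M_{n}^{f} \in R[\lambda]$ can satisfy $\left( M_{n}^{f} \right)^{n} = \res_{z}\left( \Phi_{n}^{f}(z), \lambda - \left( f^{\circ n} \right)^{\prime}(z) \right)$, so only existence needs proof.

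For existence I would first treat the universal case. Let $a_{0}, \dotsc, a_{d-1}$ be indeterminates over $\mathbb{Z}$, set $A = \mathbb{Z}\left[ a_{0}, \dotsc, a_{d-1} \right]$ and $F(z) = z^{d} + \sum_{j=0}^{d-1} a_{j} z^{j} \in A[z]$, and let $\overline{K}$ be an algebraic closure of $\mathrm{Frac}(A)$. As recalled above, $F^{\circ n}(z) - z$ is separable; combined with $F^{\circ n}(z) - z = \prod_{k \mid n} \Phi_{k}^{F}(z)$ this shows $\Phi_{n}^{F}$ is separable and the factors $\Phi_{k}^{F}$, $k \mid n$, are pairwise coprime, so the $\nu(n)$ roots of $\Phi_{n}^{F}$ in $\overline{K}$ are pairwise distinct and are precisely the periodic points of $F \colon \overline{K} \to \overline{K}$ of period exactly $n$ (a root of $\Phi_{n}^{F}$ has period dividing $n$, and a smaller period $k$ would place it among the roots of the coprime $\Phi_{k}^{F}$; conversely a point of period $n$ is a root of some $\Phi_{k}^{F}$ with $k \mid n$, hence of $F^{\circ k}(z) - z$, so $n \mid k$ and $k = n$). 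These roots therefore fall into $\nu(n)/n$ cycles of length $n$, and by the chain rule $F$ has a constant multiplier $\mu_{i}$ along the $i$th cycle; since $\Phi_{n}^{F}$ is monic in $z$, the resultant becomes
\[ \res_{z}\left( \Phi_{n}^{F}(z), \lambda - \left( F^{\circ n} \right)^{\prime}(z) \right) = \prod_{j=1}^{\nu(n)} \left( \lambda - \left( F^{\circ n} \right)^{\prime}\left( z_{j} \right) \right) = \prod_{i=1}^{\nu(n)/n} \left( \lambda - \mu_{i} \right)^{n} = Q(\lambda)^{n} \, \text{,} \]
where $Q(\lambda) = \prod_{i} \left( \lambda - \mu_{i} \right) \in \overline{K}[\lambda]$ is monic of degree $\nu(n)/n$. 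To place $Q$ in $A[\lambda]$ I would extract the $n$th root inside the unique factorization domain $\mathrm{Frac}(A)[\lambda]$ to conclude $Q \in \mathrm{Frac}(A)[\lambda]$, then observe that the coefficients of $Q$ are symmetric functions of roots of the monic polynomial $Q^{n} \in A[\lambda]$, hence integral over $A$, and invoke that $A$ is integrally closed; so $Q \in A[\lambda]$, and I set $M_{n}^{F} = Q$.

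For the general case, given an integral domain $R$ and a monic $f(z) = z^{d} + \sum_{j=0}^{d-1} b_{j} z^{j} \in R[z]$, I would take the ring homomorphism $\varphi \colon A \to R$ with $\varphi\left( a_{j} \right) = b_{j}$, extended coefficientwise. Then $\varphi(F) = f$, hence $\varphi\left( F^{\circ n} \right) = f^{\circ n}$ and $\varphi\left( \left( F^{\circ n} \right)^{\prime} \right) = \left( f^{\circ n} \right)^{\prime}$; applying $\varphi$ to $F^{\circ n}(z) - z = \prod_{k \mid n} \Phi_{k}^{F}(z)$ produces a factorization of $f^{\circ n}(z) - z$ into monic polynomials, so the uniqueness in Proposition~\ref{proposition:dynadef} over $R$ forces $\varphi\left( \Phi_{n}^{F} \right) = \Phi_{n}^{f}$. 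Since $\Phi_{n}^{F}$ is monic in $z$, forming $\res_{z}$ commutes with $\varphi$, so applying $\varphi$ to the universal identity $\res_{z}\left( \Phi_{n}^{F}(z), \lambda - \left( F^{\circ n} \right)^{\prime}(z) \right) = M_{n}^{F}(\lambda)^{n}$ yields $\res_{z}\left( \Phi_{n}^{f}(z), \lambda - \left( f^{\circ n} \right)^{\prime}(z) \right) = \varphi\left( M_{n}^{F} \right)(\lambda)^{n}$; thus $M_{n}^{f} := \varphi\left( M_{n}^{F} \right)$ is the desired polynomial, monic of degree $\nu(n)/n = \deg M_{n}^{F}$ because $\varphi$ fixes the leading coefficient $1$.

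I expect the main obstacle to lie in the universal case, and within it in two places: first, the separability of $F^{\circ n}(z) - z$ — the input recalled above from the cited work — is exactly what forces the roots of $\Phi_{n}^{F}$ to organize into $n$-cycles carrying a single multiplier, so the $n$th-power structure genuinely originates there; second, the descent of $Q$ from $\overline{K}[\lambda]$ to $A[\lambda]$ relies essentially on $A$ being integrally closed. The bookkeeping in the specialization step — that polynomial composition and the resultant with the monic argument $\Phi_{n}^{F}$ are compatible with $\varphi$, and that uniqueness in Proposition~\ref{proposition:dynadef} transports the dynatomic polynomials — is routine but must be stated carefully, since the analogous claims fail over rings that are not domains, as the excerpt's examples illustrate.
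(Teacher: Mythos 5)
Your proof is correct and takes essentially the same approach as the paper: establish the $n$th-power structure over the universal ring $\mathbb{Z}\left[ a_{0}, \dotsc, a_{d-1} \right]$ using separability of the generic $F^{\circ n}(z) - z$ so that the roots of $\Phi_{n}^{F}$ group into $\nu(n)/n$ cycles each carrying a single multiplier, then descend to an arbitrary $R$ by specialization. You merely supply detail that the paper leaves implicit, namely the descent of $Q$ from $\overline{K}[\lambda]$ to $A[\lambda]$ via unique factorization in $\mathrm{Frac}(A)[\lambda]$ and integral closedness of $A$, and the uniqueness half, both of which are handled correctly.
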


\begin{definition}
Suppose that $R$ is an integral domain and $f \in R[z]$ is a monic polynomial of degree $d$. For $n \geq 1$, the polynomial $M_{n}^{f}$ is called the $n$th \emph{multiplier polynomial} of $f$.
\end{definition}

\begin{remark}
If $R$ is a commutative ring that is not an integral domain and $f \in R[z]$ is a monic polynomial of degree $d$, then, for every $n \geq 1$, the existence of a polynomial $M_{n}^{f}$ as in Proposition~\ref{proposition:multdef} holds but the uniqueness may fail. For example, if \[ R = \mathbb{Z}/4 \mathbb{Z} \quad \text{and} \quad f(z) = z^{2} \in R[z] \, \text{,} \] then we have $\left( f^{\circ 2} \right)^{\prime}(z) = 0$, and hence \[ \res_{z}\left( \Phi_{2}^{f}(z), \lambda -\left( f^{\circ 2} \right)^{\prime}(z) \right) = \lambda^{2} = (\lambda +2)^{2} \, \text{.} \]
\end{remark}

If $K$ is an algebraically closed field, $f \in K[z]$ is a monic polynomial of degree $d$ and $n \geq 1$, then we have \[ M_{n}^{f}(\lambda)^{n} = \prod_{j = 1}^{\nu(n)} \left( \lambda -\left( f^{\circ n} \right)^{\prime}\left( z_{j} \right) \right) \, \text{,} \] where $z_{1}, \dotsc, z_{\nu(n)}$ are the~-- not necessarily distinct~-- roots of the polynomial $\Phi_{n}^{f}$. Note that, if $z_{0} \in K$ is a periodic point for $f$ with period a proper divisor $k$ of $n$ and multiplier a $\frac{n}{k}$th root of unity, then we have $\left( f^{\circ n} \right)^{\prime}\left( z_{0} \right) = 1$ by the chain rule. Therefore, by Proposition~\ref{proposition:dynaroots}, we have the result below, which gives the connection between the multipliers of a monic polynomial map and its multiplier polynomials.

\begin{proposition}
\label{proposition:multroots}
Assume that $K$ is an algebraically closed field, $f \in K[z]$ is a monic polynomial of degree $d$ and $n \geq 1$. Then $\lambda_{0} \in K$ is a root of the polynomial $M_{n}^{f}$ if and only if 
\begin{itemize}
\item $\lambda_{0}$ is the multiplier of $f$ at a cycle with period $n$,
\item or $\lambda_{0}$ equals $1$ and there exist integers $k, l \geq 1$ such that $n = k l$ and $f$ has a cycle with period $k$ and multiplier a primitive $l$th root of unity,
\item or $\lambda_{0}$ equals $1$, the field $K$ has characteristic $p > 0$ and there exist integers $k, l, m \geq 1$ such that $n = k l p^{m}$ and $f$ has a cycle with period $k$ and multiplier a primitive $l$th root of unity.
\end{itemize}
\end{proposition}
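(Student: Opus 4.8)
The plan is to extract the roots of $M_{n}^{f}$ directly from the factorization
\[ M_{n}^{f}(\lambda)^{n} = \prod_{j = 1}^{\nu(n)} \left( \lambda -\left( f^{\circ n} \right)^{\prime}\left( z_{j} \right) \right) \]
recorded just above, where $z_{1}, \dotsc, z_{\nu(n)}$ are the roots of $\Phi_{n}^{f}$ in $K$, listed with multiplicity. Since $K$ is a field, a nonzero polynomial and any of its positive powers have the same zero set; hence $\lambda_{0} \in K$ is a root of $M_{n}^{f}$ if and only if $M_{n}^{f}\left( \lambda_{0} \right)^{n} = 0$, that is, if and only if $\lambda_{0} = \left( f^{\circ n} \right)^{\prime}\left( z_{0} \right)$ for some root $z_{0}$ of $\Phi_{n}^{f}$. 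So the statement is equivalent to computing the set of values $\left( f^{\circ n} \right)^{\prime}\left( z_{0} \right)$ as $z_{0}$ ranges over the roots of $\Phi_{n}^{f}$, and Proposition~\ref{proposition:dynaroots} tells us precisely which points $z_{0}$ these are.

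For the forward direction I would run through the three cases of Proposition~\ref{proposition:dynaroots}. Any root $z_{0}$ of $\Phi_{n}^{f}$ is a periodic point for $f$ of some period $k$ dividing $n$, with multiplier $\mu_{0} \in K$, and since $f^{\circ k}$ fixes $z_{0}$ the chain rule gives $\left( f^{\circ n} \right)^{\prime}\left( z_{0} \right) = \mu_{0}^{n/k}$. If $k = n$, this value is $\mu_{0}$, the multiplier of $f$ at a cycle of period $n$. If $n = k l$ with $\mu_{0}$ a primitive $l$th root of unity, then $\left( f^{\circ n} \right)^{\prime}\left( z_{0} \right) = \mu_{0}^{l} = 1$. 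If $K$ has characteristic $p > 0$ and $n = k l p^{m}$ with $\mu_{0}$ a primitive $l$th root of unity, then $\left( f^{\circ n} \right)^{\prime}\left( z_{0} \right) = \left( \mu_{0}^{l} \right)^{p^{m}} = 1$. This shows every root of $M_{n}^{f}$ has one of the three asserted forms.

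For the converse I would use the other implication of Proposition~\ref{proposition:dynaroots}: in each of the three listed situations a point $z_{0}$ of the relevant cycle is a root of $\Phi_{n}^{f}$, so $\left( f^{\circ n} \right)^{\prime}\left( z_{0} \right)$ --- which by the chain rule computation above equals the multiplier in the first case and $1$ in the other two --- is a root of $M_{n}^{f}$. Putting the two directions together gives the characterization. There is no genuine difficulty at this stage: all the substantive input has already been supplied by Proposition~\ref{proposition:dynaroots} and by the fact that the relevant resultant is an $n$th power (Proposition~\ref{proposition:multdef}), and what remains is the bookkeeping of matching the three cases, the small arithmetic observations that $l$ divides $n/k$ so that $\mu_{0}^{n/k} = 1$ and that the last case is vacuous in characteristic $0$, and the remark that replacing $M_{n}^{f}(\lambda)^{n}$ by $M_{n}^{f}$ discards multiplicities but not roots --- which is exactly what we want.
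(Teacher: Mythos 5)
Your proposal is correct and takes essentially the same route as the paper: the paper likewise deduces the proposition from the factorization $M_{n}^{f}(\lambda)^{n} = \prod_{j} \bigl( \lambda - (f^{\circ n})'(z_{j}) \bigr)$ over the roots of $\Phi_{n}^{f}$, combined with Proposition~\ref{proposition:dynaroots} and the chain-rule observation that a periodic point of period $k$ and multiplier a $\frac{n}{k}$th root of unity has $(f^{\circ n})'(z_{0}) = 1$. Your case-by-case check of both directions fills in exactly the bookkeeping the paper leaves implicit.
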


An immediate consequence of Proposition~\ref{proposition:multroots} is the following result, which is the key ingredient in our proofs of Theorem~\ref{theorem:unicritical} and Theorem~\ref{theorem:symcubic}:

\begin{corollary}
\label{corollary:multsplit}
Assume that $K$ is an algebraically closed field, $R$ is a subring of $K$, $f \in K[z]$ is a monic polynomial of degree $d$ and $n \geq 1$. Then the multipliers of $f$ at its cycles with period $n$ all lie in $R$ if and only if the polynomial $M_{n}^{f}$ splits into linear factors of $R[\lambda]$.
\end{corollary}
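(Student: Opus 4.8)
The plan is to read off the statement directly from Proposition~\ref{proposition:multroots}. The one preliminary point is the following elementary equivalence: since $M_{n}^{f}$ is monic and $K$ is algebraically closed, $M_{n}^{f}$ factors in $K[\lambda]$ as a product of monic linear polynomials; and, because $R$ is a subring of a field and hence an integral domain, $M_{n}^{f}$ splits into linear factors of $R[\lambda]$ if and only if every root of $M_{n}^{f}$ in $K$ lies in $R$. Indeed, in any factorization of $M_{n}^{f}$ into linear factors over $R$, the leading coefficients of the factors multiply to $1$ and are therefore invertible in $R$, so the roots of those factors lie in $R$; conversely, if all roots lie in $R$ one simply writes $M_{n}^{f}(\lambda) = \prod_{i} (\lambda -\lambda_{i})$.

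Granting this, I would argue as follows. Suppose first that $M_{n}^{f}$ splits into linear factors of $R[\lambda]$, so that every root of $M_{n}^{f}$ in $K$ lies in $R$. By the first alternative of Proposition~\ref{proposition:multroots}, the multiplier of $f$ at any cycle with period $n$ is such a root, hence lies in $R$. Conversely, suppose that every multiplier of $f$ at a cycle with period $n$ lies in $R$, and let $\lambda_{0} \in K$ be any root of $M_{n}^{f}$. By Proposition~\ref{proposition:multroots}, either $\lambda_{0}$ is the multiplier of $f$ at a cycle with period $n$, in which case $\lambda_{0} \in R$ by hypothesis, or $\lambda_{0} = 1$, in which case $\lambda_{0} \in R$ because $R$ contains $1$. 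Thus every root of $M_{n}^{f}$ in $K$ lies in $R$, and the preliminary equivalence shows that $M_{n}^{f}$ splits into linear factors of $R[\lambda]$.

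There is no genuine obstacle here: both implications are immediate from Proposition~\ref{proposition:multroots} once one observes that $1 \in R$ absorbs the ``extra'' root allowed by the second and third alternatives of that proposition. The only step meriting a line of justification is the preliminary equivalence between splitting over $R[\lambda]$ and having all roots in $R$, which uses that $M_{n}^{f}$ is monic, that $K$ is algebraically closed, and that $R$ is an integral domain.
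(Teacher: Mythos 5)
Your proof is correct and takes the same route as the paper, which simply records the corollary as an immediate consequence of Proposition~\ref{proposition:multroots}. You have filled in exactly the details the paper leaves implicit: the equivalence between splitting over $R[\lambda]$ and having all roots in $R$ (using that $M_{n}^{f}$ is monic and $R$ is an integral domain), and the observation that the extra roots allowed by the second and third alternatives of Proposition~\ref{proposition:multroots} are both equal to $1$, which automatically lies in $R$.
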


Let us now focus on the multiplier polynomials associated with the family $\left( f_{c} \right)_{c \in \mathbb{C}}$. We may determine the multiplicities of their roots to obtain the following stronger version of Proposition~\ref{proposition:multroots}:

\begin{proposition}
\label{proposition:multrootsuni}
Assume that $c \in \mathbb{C}$ and $n \geq 1$. Then $\lambda_{0} \in \mathbb{C}$ is a root of the polynomial $M_{n}^{f_{c}}$ if and only if 
\begin{itemize}
\item either $\lambda_{0}$ is not $1$ and is the multiplier of $f_{c}$ at a cycle with period $n$, in which case $\ord_{\lambda_{0}} M_{n}^{f_{c}}$ equals the number of cycles for $f_{c}$ with period $n$ and multiplier $\lambda_{0}$,
\item or $\lambda_{0}$ equals $1$ and is the multiplier of $f_{c}$ at a cycle with period $n$, in which case $\ord_{\lambda_{0}} M_{n}^{f_{c}} = 2$,
\item or $\lambda_{0}$ equals $1$ and $f_{c}$ has a cycle with period a proper divisor $k$ of $n$ and multiplier a primitive $\frac{n}{k}$th root of unity, in which case $\ord_{\lambda_{0}} M_{n}^{f_{c}} = 1$.
\end{itemize}
\end{proposition}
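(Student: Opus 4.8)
The plan is to feed the precise description of the roots of $\Phi_{n}^{f_{c}}$ supplied by Proposition~\ref{proposition:dynarootsuni} into the identity
\[ M_{n}^{f_{c}}(\lambda)^{n} = \prod_{j = 1}^{\nu(n)} \left( \lambda -\left( f_{c}^{\circ n} \right)^{\prime}\left( z_{j} \right) \right) \, \text{,} \]
which holds over the algebraically closed field $\mathbb{C}$, where $z_{1}, \dotsc, z_{\nu(n)}$ are the roots of $\Phi_{n}^{f_{c}}$ repeated according to multiplicity. First I would sort these roots into the three mutually exclusive cases of Proposition~\ref{proposition:dynarootsuni} and compute $\left( f_{c}^{\circ n} \right)^{\prime}\left( z_{j} \right)$ in each: if $z_{j}$ belongs to a cycle with period $n$ and multiplier $\lambda_{0}$, then $\left( f_{c}^{\circ n} \right)^{\prime}\left( z_{j} \right) = \lambda_{0}$ by definition of the multiplier; if $z_{j}$ belongs to a cycle with period a proper divisor $k$ of $n$ and multiplier a primitive $\frac{n}{k}$th root of unity $\omega$, then $\left( f_{c}^{\circ n} \right)^{\prime}\left( z_{j} \right) = \omega^{n / k} = 1$ by the chain rule.

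Next I would collect the factors of the product cycle by cycle, using the multiplicities $\ord_{z_{j}} \Phi_{n}^{f_{c}} \in \left\lbrace 1, 2, \frac{n}{k} \right\rbrace$ from Proposition~\ref{proposition:dynarootsuni}. A cycle with period $n$ and multiplier $\lambda_{0} \neq 1$ contributes $\left( \lambda -\lambda_{0} \right)^{n}$, a cycle with period $n$ and multiplier $1$ contributes $\left( \lambda -1 \right)^{2 n}$, and a cycle with period a proper divisor $k$ of $n$ and multiplier a primitive $\frac{n}{k}$th root of unity contributes $\left( \lambda -1 \right)^{k \cdot n / k} = \left( \lambda -1 \right)^{n}$. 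Writing $s_{n}\left( \lambda_{0} \right)$ for the number of cycles of $f_{c}$ with period $n$ and multiplier $\lambda_{0}$, setting $t_{n} = s_{n}(1)$, and writing $u_{n}$ for the number of cycles of $f_{c}$ with period a proper divisor $k$ of $n$ and multiplier a primitive $\frac{n}{k}$th root of unity, this gives
\[ M_{n}^{f_{c}}(\lambda)^{n} = \left( \lambda -1 \right)^{n \left( 2 t_{n} +u_{n} \right)} \prod_{\lambda_{0} \neq 1} \left( \lambda -\lambda_{0} \right)^{n s_{n}\left( \lambda_{0} \right)} \, \text{,} \]
the product being over the finitely many $\lambda_{0} \in \mathbb{C} \setminus \lbrace 1 \rbrace$ that occur as multipliers of cycles with period $n$. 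Since $M_{n}^{f_{c}}$ is monic, taking $n$th roots in $\mathbb{C}[\lambda]$ yields
\[ M_{n}^{f_{c}}(\lambda) = \left( \lambda -1 \right)^{2 t_{n} +u_{n}} \prod_{\lambda_{0} \neq 1} \left( \lambda -\lambda_{0} \right)^{s_{n}\left( \lambda_{0} \right)} \, \text{.} \]
In particular $\ord_{\lambda_{0}} M_{n}^{f_{c}} = s_{n}\left( \lambda_{0} \right)$ for every $\lambda_{0} \neq 1$, which is the first bullet.

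It remains to identify $\ord_{1} M_{n}^{f_{c}} = 2 t_{n} +u_{n}$, and this is the only step that is not mere bookkeeping. Here I would invoke the fact recalled before Proposition~\ref{proposition:dynarootsuni} that every cycle of parabolic basins for $f_{c}$ contains the unique critical point $0$: as distinct parabolic cycles have disjoint cycles of parabolic basins, $f_{c}$ has at most one cycle whose multiplier is a root of unity. A cycle with period $n$ and multiplier $1$ is such a cycle, and so is a cycle with period a proper divisor $k$ of $n$ and multiplier a primitive $\frac{n}{k}$th root of unity (note $\frac{n}{k} \geq 2$); since these two kinds of cycles have different periods, they are distinct. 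Hence $t_{n} \leq 1$ and $u_{n} \leq 1$, and $t_{n}$ and $u_{n}$ are not both positive. Therefore, if $1$ is the multiplier of a cycle with period $n$, then $t_{n} = 1$ and $u_{n} = 0$, so $\ord_{1} M_{n}^{f_{c}} = 2$; if instead $1$ is not the multiplier of any period-$n$ cycle but $f_{c}$ does have a cycle with period a proper divisor $k$ of $n$ and multiplier a primitive $\frac{n}{k}$th root of unity, then $t_{n} = 0$ and $u_{n} = 1$, so $\ord_{1} M_{n}^{f_{c}} = 1$; and in every remaining situation $t_{n} = u_{n} = 0$, so $\ord_{1} M_{n}^{f_{c}} = 0$. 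This matches the second and third bullets and completes the proof. The main obstacle is precisely this last use of the single-critical-point constraint; the rest is a careful transcription of Proposition~\ref{proposition:dynarootsuni} through the resultant identity.
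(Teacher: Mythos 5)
Your proof is correct and follows essentially the same route as the paper: both start from the identity $M_{n}^{f_{c}}(\lambda)^{n} = \prod_{j}\left(\lambda - \left(f_{c}^{\circ n}\right)'\left(z_{j}\right)\right)$, feed in the multiplicities of Proposition~\ref{proposition:dynarootsuni} to get $M_{n}^{f_{c}}(\lambda) = (\lambda-1)^{2p+q}\prod_{j}\left(\lambda - \lambda_{j}\right)$, and then use the fact that every cycle of parabolic basins for $f_{c}$ contains the unique critical point $0$ to conclude $p+q \in \lbrace 0,1\rbrace$. You spell out the single-parabolic-cycle argument in slightly more detail, but the key lemma, the decomposition, and the final constraint are identical.
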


\begin{proof}
Since the polynomial $\Phi_{n}^{f_{c}}$ is monic, we have \[ M_{n}^{f_{c}}(\lambda)^{n} = \prod_{j = 1}^{\nu(n)} \left( \lambda -\left( f_{c}^{\circ n} \right)^{\prime}\left( z_{j} \right) \right) \, \text{,} \] where $z_{1}, \dotsc, z_{\nu(n)}$ are the~-- not necessarily distinct~-- roots of the polynomial $\Phi_{n}^{f_{c}}$. By Proposition~\ref{proposition:dynarootsuni}, it follows that \[ M_{n}^{f_{c}}(\lambda) = (\lambda -1)^{2 p +q} \prod_{j = 1}^{r} \left( \lambda -\left( f_{c}^{\circ n} \right)^{\prime}\left( w_{j} \right) \right) \, \text{,} \] where $p$ is the number of cycles for $f_{c}$ with period $n$ and multiplier $1$, $q$ is the number of cycles for $f_{c}$ with period a proper divisor $k$ of $n$ and multiplier a primitive $\frac{n}{k}$th root of unity and $w_{1}, \dotsc, w_{r}$ are representatives for the cycles for $f_{c}$ with period $n$ and multiplier different from $1$. Since every cycle of parabolic basins for $f_{c}$ contains the unique critical point $0 \in \mathbb{C}$, we have $p +q \in \lbrace 0, 1 \rbrace$. This completes the proof of the proposition.
\end{proof}

Now, view $c$ as an indeterminate over $\mathbb{Z}$ and recall that \[ \boldsymbol{R} = \mathbb{Z}[c] \quad \text{and} \quad \boldsymbol{f}(z) = z^{d} +c \in \boldsymbol{R}[z] \, \text{.} \] For $n \geq 1$, define $M_{n} \in \mathbb{Z}[c, \lambda]$ to be the image of $M_{n}^{\boldsymbol{f}}$ under the canonical ring isomorphism from $\boldsymbol{R}[\lambda]$ to $\mathbb{Z}[c, \lambda]$.

By the uniqueness in Proposition~\ref{proposition:multdef}, for every $c \in \mathbb{C}$ and every $n \geq 1$, we have \[ M_{n}^{f_{c}}(\lambda) = M_{n}(c, \lambda) \in \mathbb{C}[\lambda] \, \text{.} \] In particular, the coefficients of the polynomials $M_{n}^{f_{c}}$, with $n \geq 1$, are polynomials in $c$ with integer coefficients. In fact, the following result shows that more is true, as observed in Remark~\ref{remark:multcoeffs2} and Remark~\ref{remark:multcoeffs3}.

\begin{proposition}
\label{proposition:multprop}
For every $n \geq 1$, the polynomial $M_{n}$ lies in $\mathbb{Z}\left[ d^{d} c^{d -1}, \lambda \right]$, has leading coefficient $\pm d^{\nu(n)}$ and degree $\frac{(d -1) \nu(n)}{d}$ in $c$ and is monic in $\lambda$ of degree $\frac{\nu(n)}{n}$.
\end{proposition}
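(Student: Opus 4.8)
The plan is to prove the four assertions separately, the membership $M_{n}\in\mathbb{Z}[d^{d}c^{d-1},\lambda]$ being the substantial point. That $M_{n}$ is monic in $\lambda$ of degree $\frac{\nu(n)}{n}$ is immediate from Proposition~\ref{proposition:multdef} applied to $\boldsymbol{R}=\mathbb{Z}[c]$ and $\boldsymbol{f}$, and $M_{n}\in\mathbb{Z}[c,\lambda]$ has already been observed. For the $c^{d-1}$-structure, I would use the symmetry of the family: for every $(d-1)$-th root of unity $\zeta$, the map $z\mapsto\zeta z$ conjugates $f_{c}$ to $f_{\zeta c}$, so by conjugacy invariance of the multipliers $M_{n}(c,\lambda)=M_{n}(\zeta c,\lambda)$ in $\mathbb{C}[c,\lambda]$; taking $\zeta$ primitive shows that $M_{n}$ involves only powers $c^{(d-1)k}$, hence $M_{n}\in\mathbb{Z}[c^{d-1},\lambda]$.

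Next I would establish a divisibility by powers of $d$ in the coefficients. Since $(f_{c}^{\circ n})'(z)=\prod_{i=0}^{n-1}f_{c}'(f_{c}^{\circ i}(z))=d^{n}\prod_{i=0}^{n-1}(f_{c}^{\circ i}(z))^{d-1}$, substituting $\lambda=d^{n}\Lambda$ into $M_{n}^{f_{c}}(\lambda)^{n}=\res_{z}(\Phi_{n}^{f_{c}}(z),\lambda-(f_{c}^{\circ n})'(z))$ and pulling the scalar $d^{n}$ out of the second argument of the resultant (which is monic in $z$ of degree $\nu(n)$, contributing a factor $d^{n\nu(n)}$) gives $M_{n}(c,d^{n}\Lambda)^{n}=d^{n\nu(n)}\res_{z}\!\big(\Phi_{n}(c,z),\Lambda-\prod_{i=0}^{n-1}(f_{c}^{\circ i}(z))^{d-1}\big)$. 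The product $\prod_{i=0}^{n-1}(f_{c}^{\circ i}(z))^{d-1}$ is constant along cycles, so, as in Proposition~\ref{proposition:multdef}, the resultant on the right is the $n$-th power of a monic (in $\Lambda$) polynomial $N_{n}\in\mathbb{Z}[c,\Lambda]$; comparing $n$-th roots (both sides having positive leading coefficient in $\Lambda$), $M_{n}(c,d^{n}\Lambda)=d^{\nu(n)}N_{n}(c,\Lambda)$, and reading off coefficients, the coefficient of $\lambda^{j}$ in $M_{n}$ is divisible by $d^{\nu(n)-nj}$ in $\mathbb{Z}[c]$.

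The last and hardest ingredient is a sharp bound on the $c$-degrees. I would first determine the leading weighted form of the dynatomic polynomials: giving $z$ weight $1$ and $c$ weight $d$, an induction on $n$ using $f_{c}^{\circ(n+1)}=(f_{c}^{\circ n})^{d}+c$ shows that $f_{c}^{\circ n}(z)-z$ is supported in $\{da+b\le d^{n}\}$ with top form $(z^{d}+c)^{d^{n-1}}$; factoring $f_{c}^{\circ n}(z)-z=\prod_{k\mid n}\Phi_{k}^{f_{c}}(z)$ (Proposition~\ref{proposition:dynadef}) and using the irreducibility of $z^{d}+c$ together with the monicity in $z$ of the $\Phi_{k}^{f_{c}}$ yields that $\Phi_{n}^{f_{c}}$ is supported in $\{da+b\le\nu(n)\}$ with top form $(z^{d}+c)^{\nu(n)/d}$, so in particular $\deg_{c}\Phi_{n}^{f_{c}}=\frac{\nu(n)}{d}$. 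Hence the Newton polygon of $\Phi_{n}^{f_{c}}$, viewed as a polynomial in $z$ over $\mathbb{C}(c)$ with the valuation at $c=\infty$, is a single segment of slope $\frac{1}{d}$, so every point of period $n$ satisfies $z^{d}/c\to-1$ as $c\to\infty$; consequently each of the $\frac{\nu(n)}{n}$ multipliers $\lambda=d^{n}\prod z^{d-1}$ at period-$n$ cycles is asymptotic to a nonzero constant times $c^{n(d-1)/d}$. Since the coefficients of $M_{n}$ in $\lambda$ are, up to sign, the elementary symmetric functions of these multipliers, the coefficient of $\lambda^{j}$ has $c$-degree at most $\frac{(d-1)(\nu(n)-nj)}{d}$; for $j=0$ this is an equality, the leading coefficient being the leading term of $\pm\prod(\text{multipliers})$, which equals $\pm d^{\nu(n)}\big(\prod\zeta\big)^{d-1}=\pm d^{\nu(n)}$, where $\zeta$ runs over the $d$-th roots of $-1$ each with multiplicity $\frac{\nu(n)}{d}$; thus $\deg_{c}M_{n}=\frac{(d-1)\nu(n)}{d}$ with leading coefficient $\pm d^{\nu(n)}$. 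Finally, writing the coefficient of $\lambda^{j}$ in $M_{n}$ as $\sum_{k}a_{k,j}(c^{d-1})^{k}$ with $a_{k,j}\in\mathbb{Z}$, the degree bound forces $dk\le\nu(n)-nj$ whenever $a_{k,j}\neq0$, and the divisibility gives $d^{\nu(n)-nj}\mid a_{k,j}$, whence $d^{dk}\mid a_{k,j}$; therefore $M_{n}=\sum_{k,j}\big(a_{k,j}d^{-dk}\big)(d^{d}c^{d-1})^{k}\lambda^{j}$ lies in $\mathbb{Z}[d^{d}c^{d-1},\lambda]$.

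I expect the degree bound to be the main obstacle. The naive estimate for the $c$-degree of $M_{n}^{n}$ coming from weighted degrees in the Sylvester resultant is too weak, since the top forms of $\Phi_{n}^{f_{c}}$ and of $(f_{c}^{\circ n})'$ both contain the factor $z^{d}+c$; this is what forces the passage to the valuation-theoretic behaviour of the periodic points near $c=\infty$. One should also check that the various numerology ($\deg_{c}\Phi_{n}^{f_{c}}=\frac{\nu(n)}{d}$, the exponents $\nu(n)-nj$ and $dk$, the slope $\frac{1}{d}$ and the resulting valuation $-\frac{n(d-1)}{d}$ of the multipliers) matches up exactly, and that the cycles with multiplier $1$ coming from proper divisors of $n$, which do not appear on the generic fibre over the $c$-line, are correctly accounted for.
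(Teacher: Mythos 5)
Your proposal is correct, but it takes a genuinely different route from the paper on the central point, namely membership of $M_{n}$ in $\mathbb{Z}\left[ d^{d} c^{d-1}, \lambda \right]$. The paper argues by integrality: it first shows, via a change of variable $z \mapsto d^{\frac{1}{d-1}} z$, that for every periodic point $z_{0}$ of $\boldsymbol{f}$ over the algebraic closure of $\mathbb{Q}(c)$, the derivative $\boldsymbol{f}'\left( z_{0} \right)$ is integral over $\mathbb{Z}\left[ d^{d} c^{d-1} \right]$; since the $\sigma_{j}$ are symmetric functions of products of such elements, they are integral over $\mathbb{Z}\left[ d^{d} c^{d-1} \right]$, and combined with $\sigma_{j} \in \mathbb{Z}\left[ c^{d-1} \right]$ (your symmetry argument, same as the paper's) and the fact that $\mathbb{Z}\left[ d^{d} c^{d-1} \right]$ is integrally closed in $\mathbb{Z}\left[ c^{d-1} \right]$, membership follows with no appeal to degree bounds. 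You instead obtain the $d$-adic divisibility $d^{\nu(n)-nj} \mid a_{k,j}$ by the substitution $\lambda = d^{n}\Lambda$ and homogeneity of the resultant, and then combine it with the inequality $dk \leq \nu(n) - nj$ coming from the degree bound; this is valid but entangles membership with the degree bound, which you correctly flag as the delicate part, whereas the paper keeps them independent. For the degree bound itself the paper uses a soft estimate on periodic points ($\left\lvert z_{0} \right\rvert \leq 1 + \lvert c \rvert^{1/d}$, refined to $\bigl\lvert \left\lvert z_{0} \right\rvert - \lvert c \rvert^{1/d} \bigr\rvert \leq \bigl( 1 + \lvert c \rvert^{1/d} \bigr)^{1/d}$) and reads off the asymptotics of the multipliers; you instead compute the weighted top form of $\Phi_{n}$ and invoke a Newton-polygon argument at $c = \infty$, reaching the same asymptotic $z^{d}/c \to -1$, $\lambda \sim d^{n} c^{n(d-1)/d}$, and hence the same degree $\frac{(d-1)\nu(n)}{d}$ and leading coefficient $\pm d^{\nu(n)}$. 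Your Newton-polygon route is more algebraic and yields exact valuations as a by-product; the paper's estimate is softer but shorter. One step in your divisibility argument deserves a sentence more: that $\res_{z}\bigl( \Phi_{n}(c,z), \Lambda - \prod_{i=0}^{n-1}\bigl( f_{c}^{\circ i}(z) \bigr)^{d-1} \bigr)$ is an exact $n$-th power of a monic polynomial in $\mathbb{Z}[c,\Lambda]$ requires not only the cycle-invariance of the product (which gives the $n$-th power over $\overline{\mathbb{Q}(c)}$), but also Gauss's lemma to descend the monic $n$-th root to $\mathbb{Z}[c][\Lambda]$ and a comparison of monic normalizations to fix the choice of root, both of which are routine.
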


\begin{proof}
We have \[ M_{n}(c, \lambda) = \lambda^{\frac{\nu(n)}{n}} +\sum_{j = 1}^{\frac{\nu(n)}{n}} (-1)^{j} \sigma_{j}(c) \lambda^{\frac{\nu(n)}{n} -j} \, \text{,} \] where $\sigma_{1}, \dotsc, \sigma_{\frac{\nu(n)}{n}} \in \boldsymbol{R}$ are the elementary symmetric functions of the~-- not necessarily distinct~-- roots of the polynomial $M_{n}^{\boldsymbol{f}}$ in an algebraic closure $\boldsymbol{K}$ of the field of fractions of $\boldsymbol{R}$. Moreover, for every parameter $c \in \mathbb{C}$, since we have \[ M_{n}^{f_{c}}(\lambda) = M_{n}(c, \lambda) \in \mathbb{C}[\lambda] \, \text{,} \] $\sigma_{1}(c), \dotsc, \sigma_{\frac{\nu(n)}{n}}(c)$ are the elementary symmetric functions of the~-- not necessarily distinct~-- roots of the polynomial $M_{n}^{f_{c}}$.

First, let us prove that, for every $j \in \left\lbrace 1, \dotsc, \frac{\nu(n)}{n} \right\rbrace$, we have $\sigma_{j} \in \mathbb{Z}\left[ d^{d} c^{d -1} \right]$. Choose a primitive $(d -1)$th root of unity $\omega \in \mathbb{C}$. For every parameter $c \in \mathbb{C}$, since the maps $f_{c}$ and $f_{\omega c}$ are affinely conjugate, it follows from Proposition~\ref{proposition:multrootsuni} that the polynomials $M_{n}^{f_{c}}$ and $M_{n}^{f_{\omega c}}$ have the same roots~-- counting multiplicities. Therefore, for every $j \in \left\lbrace 1, \dotsc, \frac{\nu(n)}{n} \right\rbrace$, we have $\sigma_{j}(c) = \sigma_{j}(\omega c)$ for all $c \in \mathbb{C}$, and hence $\sigma_{j}$ lies in the subring $\mathbb{Z}\left[ c^{d -1} \right]$ of $\boldsymbol{R}$. Since $\mathbb{Z}\left[ d^{d} c^{d -1} \right]$ is integrally closed in $\mathbb{Z}\left[ c^{d -1} \right]$ and each root of $M_{n}^{\boldsymbol{f}}$ in $\boldsymbol{K}$ is of the form \[ \left( \boldsymbol{f}^{\circ n} \right)^{\prime}\left( z_{0} \right) = \prod_{j = 0}^{n -1} \boldsymbol{f}^{\prime}\left( \boldsymbol{f}^{\circ j}\left( z_{0} \right) \right) \, \text{,} \] where $z_{0} \in \boldsymbol{K}$ is a periodic point for $\boldsymbol{f} \colon \boldsymbol{K} \rightarrow \boldsymbol{K}$, it suffices to prove the fact below (compare~\cite[Theorem~1.1]{M2014}) to conclude that $\sigma_{j} \in \mathbb{Z}\left[ d^{d} c^{d -1} \right]$ for all $j \in \left\lbrace 1, \dotsc, \frac{\nu(n)}{n} \right\rbrace$. Thus, the polynomial $M_{n}$ lies in $\mathbb{Z}\left[ d^{d} c^{d -1}, \lambda \right]$.

\begin{claim}
\label{claim:integral}
If $z_{0} \in \boldsymbol{K}$ is a periodic point for $\boldsymbol{f}$, then $\boldsymbol{f}^{\prime}\left( z_{0} \right)$ is integral over $\mathbb{Z}\left[ d^{d} c^{d -1} \right]$.
\end{claim}

\begin{proof}[Proof of Claim~\ref{claim:integral}]
Choose a primitive $(d -1)$th root $d^{\frac{1}{d -1}}$ of $d$ in $\boldsymbol{K}$, and, for $m \in \mathbb{Z}$, define $d^{\frac{m}{d -1}} = \left( d^{\frac{1}{d -1}} \right)^{m}$. For every $k \geq 1$, we have \[ d^{\frac{d^{k}}{d -1}} \boldsymbol{f}^{\circ k}\left( \frac{z}{d^{\frac{1}{d -1}}} \right) = \left( d^{\frac{d^{k -1}}{d -1}} \boldsymbol{f}^{\circ (k -1)}\left( \frac{z}{d^{\frac{1}{d -1}}} \right) \right)^{d} +d^{\frac{d^{k}}{d -1}} c \, \text{.} \] It follows by induction that, for every $k \geq 0$, the polynomial $d^{\frac{d^{k}}{d -1}} \boldsymbol{f}^{\circ k}\left( \frac{z}{d^{\frac{1}{d -1}}} \right)$ is monic in $z$ of degree $d^{k}$ with coefficients in $\mathbb{Z}\left[ d^{\frac{d}{d -1}} c \right]$. Therefore, $d^{\frac{1}{d -1}} z_{0}$ is integral over $\mathbb{Z}\left[ d^{d} c^{d -1} \right]$ since it is a root of the monic polynomial \[ d^{\frac{d^{k}}{d -1}} \boldsymbol{f}^{\circ k}\left( \frac{z}{d^{\frac{1}{d -1}}} \right) -d^{\frac{d^{k} -1}{d -1}} z \in \mathbb{Z}\left[ d^{\frac{d}{d -1}} c \right][z] \, \text{,} \] where $k$ is the period of $z_{0}$, and $d^{\frac{d}{d -1}} c$ is integral over $\mathbb{Z}\left[ d^{d} c^{d -1} \right]$, and hence the same is true of $\boldsymbol{f}^{\prime}\left( z_{0} \right) = d z_{0}^{d -1}$. Thus, the claim is proved.
\end{proof}

It remains to examine the leading term of the polynomial $M_{n}$ in $c$. To do this, let us first prove the following fact:

\begin{claim}
\label{claim:bound}
If $c \in \mathbb{C}$ and $z_{0} \in \mathbb{C}$ is a periodic point for $f_{c}$, then $\left\lvert z_{0} \right\rvert \leq 1 +\lvert c \rvert^{\frac{1}{d}}$.
\end{claim}

\begin{proof}[Proof of Claim~\ref{claim:bound}]
The map $\psi_{c} \colon x \mapsto x^{d} -x -\lvert c \rvert$ is strictly increasing on $[1, +\infty)$ and satisfies $\psi_{c}\left( 1 +\lvert c \rvert^{\frac{1}{d}} \right) \geq 0$. Therefore, whenever $\lvert z \rvert > 1 +\lvert c \rvert^{\frac{1}{d}}$, we have \[ \left\lvert f_{c}(z) \right\rvert \geq \lvert z \rvert^{d} -\lvert c \rvert > \lvert z \rvert \, \text{.} \] It follows that, if $z_{0} \in \mathbb{C}$ satisfies $\left\lvert z_{0} \right\rvert > 1 +\lvert c \rvert^{\frac{1}{d}}$, then $\left( f_{c}^{\circ k}\left( z_{0} \right) \right)_{k \geq 0}$ diverges to $\infty$, and hence $z_{0}$ is not periodic for $f_{c}$. Thus, the claim is proved.
\end{proof}

Now, note that, if $c \in \mathbb{C}$ and $\lambda_{0} \in \mathbb{C}$ is a root of the polynomial $M_{n}^{f_{c}}$, then there exists a periodic point $z_{0} \in \mathbb{C}$ for $f_{c}$ such that \[ \lambda_{0} = \left( f_{c}^{\circ n} \right)^{\prime}\left( z_{0} \right) = d^{n} \prod_{j = 0}^{n -1} f^{\circ j}\left( z_{0} \right)^{d -1} \, \text{.} \] If $c \in \mathbb{C}$ and $z_{0} \in \mathbb{C}$ is a periodic point for $f_{c}$, then we have $\left\lvert f_{c}\left( z_{0} \right) \right\rvert \leq 1 +\lvert c \rvert^{\frac{1}{d}}$ by Claim~\ref{claim:bound}, and hence \[ \left\lvert \left\lvert z_{0} \right\rvert -\lvert c \rvert^{\frac{1}{d}} \right\rvert \leq \left( 1 +\lvert c \rvert^{\frac{1}{d}} \right)^{\frac{1}{d}} \, \text{.} \] It follows that there exists a map $\eta \colon \mathbb{C} \rightarrow \mathbb{R}_{> 0}$ that satisfies \[ \eta(c) = O\left( \lvert c \rvert^{\frac{(d -1) n}{d} -\frac{d -1}{d^{2}}} \right) \quad \text{as} \quad c \rightarrow \infty \] and \[ \left\lvert \left\lvert \lambda_{0} \right\rvert -d^{n} \lvert c \rvert^{\frac{(d -1) n}{d}} \right\rvert \leq \eta(c) \] whenever $c \in \mathbb{C}$ and $\lambda_{0} \in \mathbb{C}$ is a root of $M_{n}^{f_{c}}$. Therefore, for every $j \in \left\lbrace 1, \dotsc, \frac{\nu(n)}{n} \right\rbrace$, the polynomial $\sigma_{j} \in \mathbb{Z}[c]$ has degree at most $\frac{j (d -1) n}{d}$, with equality and leading coefficient $\pm d^{\nu(n)}$ if $j = \frac{\nu(n)}{n}$. This completes the proof of the proposition.
\end{proof}

Now, let us consider the \emph{multibrot set} \[ \mathcal{M} = \left\lbrace c \in \mathbb{C} : 0 \text{ has bounded forward orbit under } f_{c} \right\rbrace \, \text{.} \] For $n \geq 1$, we call \emph{hyperbolic component} of $\mathcal{M}$ with period $n$ a component $W$ of the set of parameters $c \in \mathbb{C}$ for which the map $f_{c}$ has an attracting cycle with period $n$. Given a hyperbolic component $W$ of $\mathcal{M}$ and a parameter $c \in W$, we denote by $\lambda_{W}(c)$ the multiplier of $f_{c}$ at its unique attracting cycle. For every hyperbolic component $W$ of $\mathcal{M}$, the map $\lambda_{W} \colon W \rightarrow D(0, 1)$ is a branched $(d -1)$-sheeted holomorphic covering map, its unique critical point is the unique parameter $c_{W} \in W$ for which the point $0$ is periodic for $f_{c_{W}}$ and $\lambda_{W}$ extends to a unique map $\widetilde{\lambda_{W}} \colon \overline{W} \rightarrow \overline{D(0, 1)}$ that induces a covering map from $\overline{W} \setminus \left\lbrace c_{W} \right\rbrace$ onto $\overline{D(0, 1)} \setminus \lbrace 0 \rbrace$. Furthermore, for every $n \geq 1$, the hyperbolic components of $\mathcal{M}$ with period $n$ have pairwise disjoint closures (see~\cite[Expos\'{e}~XIV and Expos\'{e}~XIX]{DH1985} or~\cite[Theorem~6.5]{M2000}). Using these facts, we obtain the following result (compare~\cite[Proposition~3.2]{MV1995}):

\begin{proposition}
\label{proposition:multsep}
Assume that $n \geq 1$. Then, for every $\lambda_{0} \in \mathbb{C}$ that satisfies $0 < \left\lvert \lambda_{0} \right\rvert \leq 1$, the polynomial $M_{n}\left( c, \lambda_{0} \right) \in \mathbb{C}[c]$ is separable. Furthermore, we have \[ M_{n}(c, 0) = \pm d^{\nu(n)} \Phi_{n}(c, 0)^{d -1} \] and the polynomial $\Phi_{n}(c, 0) \in \mathbb{Z}[c]$ is separable.
\end{proposition}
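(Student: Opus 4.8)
The plan is to prove the three assertions in turn, relying throughout on the structure of the multibrot set $\mathcal{M}$ recalled above.

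\emph{Separability of $\Phi_{n}(c, 0)$.} By Proposition~\ref{proposition:dynarootsuni}, a parameter $c_{0} \in \mathbb{C}$ is a root of the nonzero polynomial $\Phi_{n}(c, 0) \in \mathbb{Z}[c]$ if and only if $0$ is periodic for $f_{c_{0}}$ with period $n$; in that case the cycle through the critical point $0$ is superattracting, so $f_{c_{0}}$ has no other non-repelling cycle, the polynomial $\Phi_{n}^{f_{c_{0}}}$ is separable, and $c_{0}$ is the center $c_{W}$ of the hyperbolic component $W$ of $\mathcal{M}$ with period $n$ containing it. Fixing such a $c_{W}$, I would work on a neighborhood on which $\Phi_{n}^{f_{c}}$ remains separable, so that its $\nu(n)$ roots vary holomorphically and exactly one of them, say $z_{0}(c)$, satisfies $z_{0}\left( c_{W} \right) = 0$. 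From $\left( f_{c}^{\circ n} \right)^{\prime}\left( z_{0}(c) \right) = d^{n} \prod_{j = 0}^{n -1} f_{c}^{\circ j}\left( z_{0}(c) \right)^{d -1}$, in which the factors other than $z_{0}(c)^{d -1}$ do not vanish at $c_{W}$, the order of vanishing of $z_{0}$ at $c_{W}$ is $\frac{1}{d -1}$ times that of the multiplier map $\lambda_{W}$. Since $c_{W}$ is both the unique critical point and the unique zero of the $(d -1)$-sheeted covering map $\lambda_{W}$, it has local degree $d -1$ there, so $\lambda_{W}$ vanishes to order exactly $d -1$ at $c_{W}$; hence $z_{0}$ vanishes to order $1$. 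As $\Phi_{n}(c, 0)$ equals, up to sign, the product of the roots of $\Phi_{n}^{f_{c}}$ and only $z_{0}$ among them tends to $0$ as $c \to c_{W}$, one concludes $\ord_{c_{W}} \Phi_{n}(c, 0) = 1$, so every root of $\Phi_{n}(c, 0)$ is simple.

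\emph{The identity $M_{n}(c, 0) = \pm d^{\nu(n)} \Phi_{n}(c, 0)^{d -1}$.} Both sides are polynomials in $c$, so it is enough to verify the identity at the infinitely many parameters $c$ for which $\Phi_{n}^{f_{c}}$ is separable; these form the complement of the zero set of $\disc_{z} \Phi_{n} \in \mathbb{Z}[c]$, which is nonzero since $\Phi_{n}$ is irreducible over $\mathbb{C}$ by Proposition~\ref{proposition:dynairr}. For such a $c$, all $\nu(n)$ roots of $\Phi_{n}^{f_{c}}$ are periodic of period $n$ and form $\frac{\nu(n)}{n}$ cycles, so by the proof of Proposition~\ref{proposition:multrootsuni} one has $M_{n}^{f_{c}}(\lambda) = \prod_{\mathcal{C}} \left( \lambda -\lambda_{\mathcal{C}} \right)$, the product being over those cycles and $\lambda_{\mathcal{C}}$ the multiplier of $\mathcal{C}$; hence $M_{n}(c, 0) = \pm \prod_{\mathcal{C}} \lambda_{\mathcal{C}}$. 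Writing $\lambda_{\mathcal{C}} = d^{n} \prod_{w \in \mathcal{C}} w^{d -1}$ and multiplying over the $\frac{\nu(n)}{n}$ cycles gives $\prod_{\mathcal{C}} \lambda_{\mathcal{C}} = d^{\nu(n)} \bigl( \prod_{w} w \bigr)^{d -1} = \pm d^{\nu(n)} \Phi_{n}(c, 0)^{d -1}$, since $\Phi_{n}^{f_{c}}$ is monic of degree $\nu(n)$.

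\emph{Separability of $M_{n}\left( c, \lambda_{0} \right)$ for $0 < \left\lvert \lambda_{0} \right\rvert \leq 1$.} Here I would count roots. By Proposition~\ref{proposition:multrootsuni} together with the structure of $\mathcal{M}$, a parameter $c_{0}$ is a root of $M_{n}\left( c, \lambda_{0} \right)$ if and only if $\left( f_{c_{0}}^{\circ n} \right)^{\prime}$ takes the value $\lambda_{0}$ at some root of $\Phi_{n}^{f_{c_{0}}}$, which happens exactly when $c_{0} \in \widetilde{\lambda_{W}}^{-1}\left( \lambda_{0} \right)$ for some hyperbolic component $W$ of $\mathcal{M}$ with period $n$; since $\lambda_{0} \neq 0$, each such fiber has exactly $d -1$ points, and since the period-$n$ components have pairwise disjoint closures these fibers are disjoint. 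Thus $M_{n}\left( c, \lambda_{0} \right)$ has exactly $(d -1) \cdot \#\lbrace W \rbrace$ distinct roots, where $\#\lbrace W \rbrace$ is the number of period-$n$ components. By the first part this equals the number of distinct roots of the separable polynomial $\Phi_{n}(c, 0)$, namely $\deg_{c} \Phi_{n}(c, 0) = \frac{\nu(n)}{d}$ (which follows from $f_{c}^{\circ n}(0) = \prod_{k \mid n} \Phi_{k}(c, 0)$), while Proposition~\ref{proposition:multprop} shows the term of highest degree in $c$ of $M_{n}(c, \lambda)$ is $\pm d^{\nu(n)} c^{\frac{(d -1) \nu(n)}{d}}$, so $\deg_{c} M_{n}\left( c, \lambda_{0} \right) = \frac{(d -1) \nu(n)}{d} = (d -1) \cdot \#\lbrace W \rbrace$. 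Hence every root of $M_{n}\left( c, \lambda_{0} \right)$ is simple.

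\emph{Expected main obstacle.} The delicate point is the local study at the centers $c_{W}$: turning ``the only non-repelling cycle of $f_{c_{W}}$ is superattracting'' into the separability and holomorphic persistence of $\Phi_{n}^{f_{c}}$ needed to define $z_{0}(c)$, and then reading off from the covering structure that $\lambda_{W}$ vanishes at $c_{W}$ to order exactly $d -1$. A secondary subtlety is the root count when $\left\lvert \lambda_{0} \right\rvert = 1$, where parabolic and irrationally indifferent parameters~-- including the root points of period-$n$ components, at which $\lambda_{0} = 1$~-- must be matched with the boundary fibers of $\widetilde{\lambda_{W}}$ via the properties of this extension recalled above.
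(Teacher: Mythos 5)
Your proof is correct and follows the route the paper implicitly indicates: since the paper gives no explicit proof beyond ``using these facts'' (the branched-covering structure of $\lambda_{W}$, the boundary extension $\widetilde{\lambda_{W}}$, and the disjointness of closures of same-period components), your counting argument matching roots of $M_{n}\left( c, \lambda_{0} \right)$ to fibers of $\widetilde{\lambda_{W}}$ and comparing against the $c$-degree $\frac{(d -1) \nu(n)}{d}$ from Proposition~\ref{proposition:multprop} is exactly the computation being gestured at. The local analysis at centers $c_{W}$ (simple root of $\Phi_{n}^{f_{c_{W}}}$ at $0$, order of $z_{0}(c)$ equal to $\frac{1}{d -1} \ord_{c_{W}} \lambda_{W} = 1$) and the generic-parameter derivation of $M_{n}(c, 0) = \pm d^{\nu(n)} \Phi_{n}(c, 0)^{d -1}$ are both sound, as is the degree identity $\deg_{c} \Phi_{n}(c, 0) = \frac{\nu(n)}{d}$ via M\"{o}bius inversion. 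Your flagged subtlety at $\left\lvert \lambda_{0} \right\rvert = 1$ is real but mild: for the separability conclusion you only need the one inclusion ``each point of $\widetilde{\lambda_{W}}^{-1}\left( \lambda_{0} \right)$ is a root of $M_{n}\left( c, \lambda_{0} \right)$,'' which at satellite roots (where the period-$n$ cycle collapses) is still guaranteed by the third bullet of Proposition~\ref{proposition:multrootsuni}, so the lower bound on the number of distinct roots matches the degree and forces simplicity.
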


Suppose that $n \geq 1$. The polynomial $\Phi_{n}^{\boldsymbol{f}}$ is irreducible over $\mathbb{C}(c)$ by Proposition~\ref{proposition:dynairr}, and hence its roots in an algebraic closure of $\mathbb{C}(c)$ are Galois conjugates of each other over $\mathbb{C}(c)$. It follows that the same is true of the polynomial $M_{n}^{\boldsymbol{f}}$, which shows that the polynomial $M_{n}$ is the power of some irreducible polynomial in $\mathbb{C}[c, \lambda]$. Therefore, since the polynomial $M_{n}(c, 1) \in \mathbb{Z}[c]$ is separable by Proposition~\ref{proposition:multsep}, we have the following:

\begin{proposition}[{\cite[Corollary~1]{M1996}}]
For every $n \geq 1$, the polynomial $M_{n}$ is irreducible over $\mathbb{C}$.
\end{proposition}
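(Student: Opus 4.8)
The plan is to descend from the irreducibility of the dynatomic polynomial $\Phi_{n}$, established in Proposition~\ref{proposition:dynairr}, to that of $M_{n}$ by a Galois-theoretic argument over the field $\mathbb{C}(c)$, and then to invoke Proposition~\ref{proposition:multsep} in order to discard the only remaining obstruction, namely the possibility that $M_{n}$ is a proper power of an irreducible polynomial.

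First I would fix the Galois setup. The polynomial $\Phi_{n}^{\boldsymbol{f}}$ is monic in $z$ of degree $\nu(n)$ with coefficients in $\boldsymbol{R} = \mathbb{Z}[c]$ and is irreducible in $\mathbb{C}[c, z]$ by Proposition~\ref{proposition:dynairr}; being monic in $z$, it is irreducible over $\mathbb{C}(c)$ by Gauss's lemma, and it is separable since $\mathbb{C}(c)$ has characteristic $0$. Let $L$ be a splitting field of $\Phi_{n}^{\boldsymbol{f}}$ over $\mathbb{C}(c)$, let $z_{1}, \dotsc, z_{\nu(n)} \in L$ be its roots and let $G = \mathrm{Gal}\left( L / \mathbb{C}(c) \right)$, which acts transitively on $\left\lbrace z_{1}, \dotsc, z_{\nu(n)} \right\rbrace$. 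Using Proposition~\ref{proposition:multdef} and the fact that $\Phi_{n}^{\boldsymbol{f}}$ is monic, we have \[ M_{n}^{\boldsymbol{f}}(\lambda)^{n} = \res_{z}\left( \Phi_{n}^{\boldsymbol{f}}(z), \lambda -\left( \boldsymbol{f}^{\circ n} \right)^{\prime}(z) \right) = \prod_{j = 1}^{\nu(n)} \left( \lambda -\left( \boldsymbol{f}^{\circ n} \right)^{\prime}\left( z_{j} \right) \right) \] in $L[\lambda]$. Since $\left( \boldsymbol{f}^{\circ n} \right)^{\prime}$ has coefficients in $\boldsymbol{R}$, every $\sigma \in G$ sends $\left( \boldsymbol{f}^{\circ n} \right)^{\prime}\left( z_{j} \right)$ to $\left( \boldsymbol{f}^{\circ n} \right)^{\prime}\left( \sigma\left( z_{j} \right) \right)$; combined with the transitivity of $G$ on the $z_{j}$, this shows that the distinct roots of $M_{n}^{\boldsymbol{f}}$ form a single orbit under $G$.

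Next I would conclude that $M_{n}$ is a power of an irreducible polynomial. Since $M_{n}^{\boldsymbol{f}} \in \mathbb{C}(c)[\lambda]$ is fixed by $G$ while $G$ acts transitively on the set of its roots, all of these roots have a common multiplicity, so $M_{n}^{\boldsymbol{f}} = P^{e}$ for some integer $e \geq 1$, where $P \in \mathbb{C}(c)[\lambda]$ is the monic polynomial whose roots are the distinct roots of $M_{n}^{\boldsymbol{f}}$; as these form a single $G$-orbit, $P$ is separable and irreducible over $\mathbb{C}(c)$. Because $M_{n}$ is monic in $\lambda$ with coefficients in $\mathbb{C}[c]$, hence primitive over $\mathbb{C}[c]$, and $P$ is monic in $\lambda$, Gauss's lemma gives $P \in \mathbb{C}[c, \lambda]$ and shows that $P$ is irreducible in $\mathbb{C}[c, \lambda]$. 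It then remains only to prove that $e = 1$.

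For this last step I would specialize at $\lambda = 1$: we have $M_{n}(c, 1) = P(c, 1)^{e}$ in $\mathbb{C}[c]$, while Proposition~\ref{proposition:multprop} shows that the coefficient of $c^{\frac{(d -1) \nu(n)}{d}}$ in $M_{n}$ equals $\pm d^{\nu(n)} \neq 0$ and does not involve $\lambda$, so $M_{n}(c, 1)$ has positive degree $\frac{(d -1) \nu(n)}{d}$ in $c$ and hence $P(c, 1)$ is not constant. Since $0 < \lvert 1 \rvert \leq 1$, Proposition~\ref{proposition:multsep} tells us that $M_{n}(c, 1)$ is separable, that is, squarefree in $\mathbb{C}[c]$; but $P(c, 1)^{e}$ with $P(c, 1)$ non-constant is squarefree only when $e = 1$. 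Thus $M_{n} = P$ is irreducible over $\mathbb{C}$. I expect the only delicate point to be this exclusion of $e \geq 2$: the transitivity of $G$ follows immediately from Proposition~\ref{proposition:dynairr} but does not rule out that $M_{n}^{\boldsymbol{f}}$ has repeated roots, and doing so is precisely the role of the separability of $M_{n}(c, 1)$ from Proposition~\ref{proposition:multsep}, combined with the leading-term computation of Proposition~\ref{proposition:multprop}.
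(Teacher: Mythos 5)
Your proof is correct and follows the same route as the paper: use the irreducibility of $\Phi_{n}$ (Proposition~\ref{proposition:dynairr}) to see that the roots of $M_{n}^{\boldsymbol{f}}$ over $\mathbb{C}(c)$ form a single Galois orbit, conclude that $M_{n}$ is a power $P^{e}$ of an irreducible polynomial, and then invoke the separability of $M_{n}(c,1)$ from Proposition~\ref{proposition:multsep} to force $e=1$. You spell out two details the paper leaves implicit — the Gauss's lemma descent from $\mathbb{C}(c)[\lambda]$ to $\mathbb{C}[c,\lambda]$, and the appeal to Proposition~\ref{proposition:multprop} to ensure $P(c,1)$ is nonconstant so that separability of $M_{n}(c,1)$ actually bites — but these are exactly the right things to fill in.
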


\subsection{Discriminants of the multiplier polynomials}

Finally, let us study here the discriminants of the multiplier polynomials associated with the family $\left( f_{c} \right)_{c \in \mathbb{C}}$. Their expressions, for $d \in \lbrace 2, 3 \rbrace$ and small values of $n$, are an essential ingredient in our proofs of Proposition~\ref{proposition:quadratic} and Proposition~\ref{proposition:cubic}.

For $n \geq 1$, define \[ \Delta_{n}(c) = \disc_{\lambda} M_{n}(c, \lambda) \in \mathbb{Z}[c] \, \text{.} \] In fact, for every $n \geq 1$, the polynomial $\Delta_{n}$ lies in $\mathbb{Z}\left[ d^{d} c^{d -1} \right]$ by Proposition~\ref{proposition:multprop}. Furthermore, for every $c \in \mathbb{C}$ and every $n \geq 1$, we have \[ \Delta_{n}(c) = \disc M_{n}^{f_{c}} \in \mathbb{C} \, \text{.} \]

\begin{example}
\label{example:delta1}
By Example~\ref{example:mult1}, we have \[ M_{1}(c, \lambda) = \lambda (\lambda -d)^{d -1} +(-d)^{d} c^{d -1} \quad \text{and} \quad \frac{\partial M_{1}}{\partial \lambda}(c, \lambda) = d (\lambda -1) (\lambda -d)^{d -2} \, \text{.} \] Therefore, we have \[ \begin{split} \Delta_{1}(c) & = (-1)^{\frac{d (d -1)}{2}} \res_{\lambda}\left( M_{1}(c, \lambda), \frac{\partial M_{1}}{\partial \lambda}(c, \lambda) \right)\\ & = (-1)^{\frac{d (d -1)}{2}} d^{d} M_{1}(c, 1) M_{1}(c, d)^{d -2}\\ & = (-1)^{\frac{d (d -1)}{2}} d^{d (d -1)} c^{(d -1) (d -2)} \left( d^{d} c^{d -1} -(d -1)^{d -1} \right) \, \text{.} \end{split} \]
\end{example}

Suppose that $n \geq 1$. By Proposition~\ref{proposition:multrootsuni}, the roots of the polynomial $\Delta_{n}$ are precisely the parameters $c_{0} \in \mathbb{C}$ for which $f_{c_{0}}$ has a cycle with period $n$ and multiplier $1$ or $f_{c_{0}}$ has two distinct cycles with period $n$ and the same multiplier. Thus, in order to factor $\Delta_{n}$, it is natural to try to define a polynomial that vanishes precisely at the parameters $c_{0} \in \mathbb{C}$ for which the map $f_{c_{0}}$ has a cycle with period $n$ and multiplier $1$. Note that, by Proposition~\ref{proposition:multrootsuni}, the roots of the polynomial $M_{n}(c, 1) \in \mathbb{Z}[c]$ are precisely the parameters $c_{0} \in \mathbb{C}$ for which either $f_{c_{0}}$ has a cycle with period $n$ and multiplier $1$ or $f_{c_{0}}$ has a cycle with period a proper divisor $k$ of $n$ and multiplier a primitive $\frac{n}{k}$th root of unity, which suggests factoring this polynomial.

For $k \geq 1$ and $l \geq 2$, define \[ P_{k, l}(c) = \res_{\lambda}\left( C_{l}(\lambda), M_{k}(c, \lambda) \right) \in \mathbb{Z}[c] \, \text{,} \] where $C_{l} \in \mathbb{Z}[\lambda]$ denotes the $l$th cyclotomic polynomial. We have \[ P_{k, l}(c) = \prod_{j = 1}^{\varphi(l)} M_{k}\left( c, \omega_{j} \right) \, \text{,} \] where $\varphi \colon \mathbb{Z}_{\geq 1} \rightarrow \mathbb{Z}_{\geq 1}$ denotes Euler's totient function and $\omega_{1}, \dotsc, \omega_{\varphi(l)}$ are the primitive $l$th roots of unity in $\mathbb{C}$. By Proposition~\ref{proposition:multrootsuni} and Proposition~\ref{proposition:multprop}, it follows that the polynomial $P_{k, l}$ lies in $\mathbb{Z}\left[ d^{d} c^{d -1} \right]$, has leading coefficient $\pm 1$ in $d^{d} c^{d -1}$ and its roots are precisely the parameters $c_{0} \in \mathbb{C}$ for which the map $f_{c_{0}}$ has a cycle with period $k$ and multiplier a primitive $l$th root of unity. Furthermore, by Proposition~\ref{proposition:multsep} and since every cycle of parabolic basins contains a critical point, the polynomial $P_{k, l}$ is separable and, for every $k^{\prime} \geq 1$ and every $l^{\prime} \geq 2$ such that $(k, l) \neq \left( k^{\prime}, l^{\prime} \right)$, the polynomials $P_{k, l}$ and $P_{k^{\prime}, l^{\prime}}$ have no common roots.

\begin{remark}
For every $k \geq 1$ and every $l \geq 2$, the parameters $c_{0} \in \mathbb{C}$ for which the map $f_{c_{0}}$ has a cycle with period $k$ and multiplier a primitive $l$th root of unity are precisely the parameters at which a cycle with period $k l$ degenerates to a cycle with period $k$. These are the parameters that lie in the intersections of the closures of hyperbolic components of $\mathcal{M}$ with period $k$ with the closures of hyperbolic components of $\mathcal{M}$ with period $k l$.
\end{remark}

Suppose that $n \geq 1$. By Proposition~\ref{proposition:multrootsuni}, Proposition~\ref{proposition:multprop} and Proposition~\ref{proposition:multsep}, the polynomial $M_{n}(c, 1) \in \mathbb{Z}[c]$ lies in $\mathbb{Z}\left[ d^{d} c^{d -1} \right]$, has leading coefficient $\pm 1$ in $d^{d} c^{d -1}$ and its roots are simple and are precisely the roots of the polynomials $P_{k, \frac{n}{k}}$, with $k$ a proper divisor of $n$, and the parameters $c_{0} \in \mathbb{C}$ for which the map $f_{c_{0}}$ has a cycle with period $n$ and multiplier $1$. Furthermore, the polynomial \[ \prod_{k \mid n, \, k \neq n} P_{k, \frac{n}{k}} \in \mathbb{Z}[c] \] lies in $\mathbb{Z}\left[ d^{d} c^{d -1} \right]$, has leading coefficient $\pm 1$ in $d^{d} c^{d -1}$ and is separable by the discussion above. Therefore, there exists a unique polynomial $Q_{n} \in \mathbb{Z}[c]$ such that \[ M_{n}(c, 1) = Q_{n}(c) \prod_{k \mid n, \, k \neq n} P_{k, \frac{n}{k}}(c) \] and the polynomial $Q_{n}$ lies in $\mathbb{Z}\left[ d^{d} c^{d -1} \right]$, has leading coefficient $\pm 1$ in $d^{d} c^{d -1}$ and its roots are simple and are precisely the parameters $c_{0} \in \mathbb{C}$ for which the map $f_{c_{0}}$ has a cycle with period $n$ and multiplier $1$ (see~\cite[Theorem~A]{MV1995}).

\begin{remark}
For every $n \geq 1$, the parameters $c_{0} \in \mathbb{C}$ for which the map $f_{c_{0}}$ has a cycle with period $n$ and multiplier $1$ are precisely the parameters at which two distinct cycles with period $n$ collide. These are the parameters that occur at the cusps of the hyperbolic components of $\mathcal{M}$ with period $n$.
\end{remark}

Since the polynomials $Q_{n}$ and $\Delta_{n}$ lie in $\mathbb{Z}\left[ d^{d} c^{d -1} \right]$ and the polynomial $Q_{n}$ has leading coefficient $\pm 1$ in $d^{d} c^{d -1}$ and its roots are simple and are also roots of $\Delta_{n}$, the polynomial $Q_{n}$ divides $\Delta_{n}$ in $\mathbb{Z}\left[ d^{d} c^{d -1} \right]$. In fact, the result below shows that more is true, as observed in Remark~\ref{remark:deltafact} (compare~\cite[Proposition~9]{M1996}).

\begin{proposition}
For every $n \geq 1$, there exist a unique squarefree integer $a_{n}$ and a unique polynomial $R_{n} \in \mathbb{Z}[c]$ with positive leading coefficient that satisfy \[ \Delta_{n} = a_{n} Q_{n} R_{n}^{2} \, \text{.} \] Furthermore, the polynomial $R_{n}$ lies in $\mathbb{Z}\left[ d^{d} c^{d -1} \right]$ and its roots are precisely the parameters $c_{0} \in \mathbb{C}$ for which the map $f_{c_{0}}$ has two distinct cycles with period $n$ and the same multiplier.
\end{proposition}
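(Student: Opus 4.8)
The plan is to compute, for each $n \ge 1$ and each complex root $c_0$ of $\Delta_n$, the order $\ord_{c_0} \Delta_n$ modulo $2$ and to compare it with $\ord_{c_0} Q_n$. Recall from the preceding discussion that $\Delta_n = \disc_\lambda M_n(c, \lambda)$ is nonzero ($M_n$ being irreducible, hence separable, over $\mathbb{C}(c)$), that $Q_n$ is squarefree, and that $Q_n$ divides $\Delta_n$ in $\mathbb{Z}[d^d c^{d-1}]$; write $\Delta_n = Q_n S_n$ with $S_n \in \mathbb{Z}[d^d c^{d-1}]$. Granting the parity statement $\ord_{c_0} \Delta_n \equiv \ord_{c_0} Q_n \pmod 2$ for all $c_0$, every complex root of $S_n$ has even multiplicity, so $S_n = b h^2$ with $b \in \mathbb{Q}^\times$ and $h \in \mathbb{Q}[c]$; choosing $h$ primitive in $\mathbb{Z}[c]$ forces $b \in \mathbb{Z}$ by Gauss's lemma, and writing $b = a_n m^2$ with $a_n$ squarefree while absorbing $m$ and an overall sign into $R_n$ (normalised so that its leading coefficient is positive) yields the factorisation $\Delta_n = a_n Q_n R_n^2$, the pair $(a_n, R_n)$ being then evidently unique. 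Moreover $R_n^2 = \Delta_n/(a_n Q_n) \in \mathbb{Z}[d^d c^{d-1}]$, and combining this with the invariance of $\Delta_n$ and $Q_n$ under $c \mapsto \omega c$ (for $\omega$ a primitive $(d-1)$th root of unity) gives $R_n \in \mathbb{Z}[d^d c^{d-1}]$.

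The core of the argument is the parity statement, which I would prove by localising the discriminant at $c_0$. In the ring of germs of holomorphic functions at $c_0$, factor $M_n(c, \lambda) = \prod_v G_v(c, \lambda)$, where $v$ ranges over the distinct roots of $M_n(c_0, \lambda)$ and $G_v$ collects the roots of $M_n(c, \lambda)$ that tend to $v$ as $c \to c_0$; since roots in distinct clusters stay at invertible distance, $\ord_{c_0} \Delta_n = \sum_v \ord_{c_0} \disc_\lambda G_v$, and only clusters of size $\ge 2$ contribute. Using Proposition~\ref{proposition:multrootsuni} and the fact that every cycle of parabolic basins for $f_{c_0}$ contains the critical point $0$, I would classify these clusters. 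A cluster attached to a value $v \ne 1$ consists only of period-$n$ cycles of $f_{c_0}$ having multiplier $v$ at $c_0$ (a cycle degenerating onto a lower-period cycle would have multiplier tending to $1$); since $v \ne 1$, each such cycle persists as a holomorphic family of period-$n$ cycles near $c_0$ by the implicit function theorem, so the roots in $G_v$ are holomorphic functions of $c$, whence $\disc_\lambda G_v$ is the square of a holomorphic function and $\ord_{c_0} \disc_\lambda G_v$ is even. For the cluster attached to $v = 1$: by the parabolic-basin fact, $f_{c_0}$ has at most one period-$n$ cycle of multiplier $1$ and at most one cycle of period a proper divisor of $n$ whose multiplier is a root of unity, and these two situations are mutually exclusive; in the latter case the cluster has size $1$ (a single holomorphic branch), contributing nothing, while in the former case the cluster has size exactly $2$ (the period-$n$ point having $\ord_{z_0} \Phi_n^{f_{c_0}} = 2$, by \cite[Proposition~2.2]{BL2014}), and its two roots $\mu_+(c), \mu_-(c)$ are the roots of a monic quadratic over the local ring.

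The step I expect to be the main obstacle is to show that, in this last case, $\disc_\lambda G_1 = (\mu_+ - \mu_-)^2$ vanishes to order \emph{exactly} $1$ at $c_0$ --- the non-degeneracy of the cusps of the hyperbolic components of $\mathcal{M}$, which is the analogue for the family $(f_c)$ of \cite[Proposition~9]{M1996}. I would prove it by a local computation: passing to the coordinate $w = (c - c_0)^{1/2}$, the two merging cycles become $\lbrace z_j(w) \rbrace_j$ and $\lbrace z_j(-w) \rbrace_j$ with $f_{c_0+w^2}(z_j(w)) = z_{j+1}(w)$, so that $\mu_+(c) - \mu_-(c) = \mu_+(w) - \mu_+(-w)$ is an odd function of $w$ whose first Taylor coefficient equals $2 \sum_j \frac{f_{c_0}''(z_j(0))}{f_{c_0}'(z_j(0))}\, \dot z_j$, where $\dot z_{j+1} = f_{c_0}'(z_j(0))\, \dot z_j$ and the cluster multiplier $\prod_j f_{c_0}'(z_j(0))$ equals $1$; since the parabolic cycle has multiplier $1 \ne 0$ and hence avoids the critical point $0$ --- the only common zero of $f_{c_0}'$ and $f_{c_0}''$ --- every factor occurring in this coefficient is nonzero, and it remains to see that the coefficient itself is nonzero, which is exactly where the argument of \cite[Proposition~9]{M1996} is used. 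Granting this, $\mu_+ - \mu_- = \mathrm{const}\cdot w + O(w^3)$ with nonzero constant, so $(\mu_+ - \mu_-)^2$ has a simple zero in $c$ at $c_0$; consequently $\ord_{c_0} \Delta_n$ is odd precisely when $f_{c_0}$ has a period-$n$ cycle of multiplier $1$, i.e.\ precisely when $c_0$ is one of the simple roots of $Q_n$, which is the parity statement.

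It remains to identify the roots of $R_n$. Since $a_n$ is a nonzero constant, $\ord_{c_0} \Delta_n = \ord_{c_0} Q_n + 2\, \ord_{c_0} R_n$, so $c_0$ is a root of $R_n$ exactly when $\ord_{c_0} \Delta_n \ge \ord_{c_0} Q_n + 2$. By the cluster analysis this happens exactly when, besides the possible ``multiplier $1$'' cluster of order $1$, the parameter $c_0$ carries a cluster at some value $v$ containing at least two distinct period-$n$ cycles of multiplier $v$ --- equivalently, when $f_{c_0}$ has two distinct cycles with period $n$ and the same multiplier. (In particular a ``pure'' cusp, where the only coincidence is a single period-$n$ cycle of multiplier $1$, is not a root of $R_n$, consistently with having only one cycle there.) This completes the proof.
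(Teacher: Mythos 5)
Your decomposition of $\ord_{c_0}\Delta_n$ into local clusters of roots of $M_n(c_0,\lambda)$ coincides in substance with the paper's factorization $M_n^{\boldsymbol{f}}(\lambda)=N_n(\lambda)\prod_j(\lambda-\lambda_j)$: your clusters at $v\neq1$ are the holomorphic branches $\lambda_j$ and contribute even orders, your cluster at $v=1$ is the factor $N_n$, and the three-way classification of the parabolic situation and the Gauss's lemma reduction are the same. The two proofs part ways at the step you yourself flag as the main obstacle, namely showing that $\disc_\lambda G_1$ has a \emph{simple} zero at a parabolic cusp. You introduce the Puiseux coordinate $w=(c-c_0)^{1/2}$ and compute that $\mu_+-\mu_-$ is odd in $w$ with leading coefficient proportional to $\sum_j\frac{f_{c_0}''(z_j)}{f_{c_0}'(z_j)}\,\dot z_j$; but each summand being nonzero does not make the sum nonzero, and you then defer the required nonvanishing to \cite[Proposition~9]{M1996} rather than proving it. This leaves your argument open at precisely its crux: the transversality of the cusp is the whole content of the claim that $\ord_{c_0}\Delta_n$ is odd when $f_{c_0}$ has a period-$n$ cycle of multiplier $1$.

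The paper closes that step by a different and, given what it has already built, more economical route. Writing $N_n(\lambda)=\lambda^2-\sigma_1\lambda+\sigma_2$ with $\sigma_1(c_0)=2$, $\sigma_2(c_0)=1$, it observes
\[ \left(\sigma_1^2-4\sigma_2\right)'(c_0) = -4\left(-\sigma_1'(c_0)+\sigma_2'(c_0)\right) = \frac{-4\,\frac{\partial M_n}{\partial c}(c_0,1)}{\prod_{j}\left(1-\lambda_j(c_0)\right)} \neq 0 \, \text{,} \]
because $M_n(c,1)\in\mathbb{Z}[c]$ is separable by Proposition~\ref{proposition:multsep}, which was established earlier from the covering structure of the multiplier maps $\lambda_W$ on hyperbolic components of $\mathcal{M}$. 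This turns the transversality at a cusp into a fact already available in the paper, bypassing the local Taylor computation and the appeal to Morton; you should use Proposition~\ref{proposition:multsep} to fill the gap in your sketch.
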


\begin{proof}
Since the polynomials $Q_{n}$ and $\Delta_{n}$ lie in $\mathbb{Z}\left[ d^{d} c^{d -1} \right]$ and the polynomial $Q_{n}$ has leading coefficient $\pm 1$ in $d^{d} c^{d -1}$, is separable and its roots are also roots of $\Delta_{n}$, it suffices to prove that, for every parameter $c_{0} \in \mathbb{C}$, we have \[ \ord_{c_{0}} \Delta_{n} = \varepsilon_{n}\left( c_{0} \right) +2 \kappa_{n}\left( c_{0} \right) \, \text{,} \] where $\varepsilon_{n}\left( c_{0} \right) \in \lbrace 0, 1 \rbrace$ equals $1$ if and only if $f_{c_{0}}$ has a cycle with period $n$ and multiplier $1$ and $\kappa_{n}\left( c_{0} \right) \in \mathbb{Z}_{\geq 0}$ is positive if and only if $f_{c_{0}}$ has two distinct cycles with period $n$ and the same multiplier.

Suppose that $c_{0} \in \mathbb{C}$. Choose representatives $z_{1}, \dotsc, z_{r}$ for the cycles for $f_{c_{0}}$ with period $n$ and multiplier different from $1$. For every $j \in \lbrace 1, \dotsc, r \rbrace$, we have $\ord_{z_{j}} \Phi_{n}^{f_{c_{0}}} = 1$ by Proposition~\ref{proposition:dynarootsuni}, and hence $\frac{\partial \Phi_{n}}{\partial z}\left( c_{0}, z_{j} \right) \neq 0$. By the implicit function theorem, it follows that there exist a complex domain $U$ containing $c_{0}$ and holomorphic maps \[ \zeta_{1}, \dotsc, \zeta_{r} \colon U \rightarrow \mathbb{C} \] such that, for every $j \in \lbrace 1, \dotsc, r \rbrace$, we have $\zeta_{j}\left( c_{0} \right) = z_{j}$ and $\Phi_{n}\left( c, \zeta_{j}(c) \right) = 0$ for all $c \in U$. Now, embed $\boldsymbol{R} = \mathbb{Z}[c]$ into the ring $\boldsymbol{S} = \mathcal{H}(U)$ of holomorphic maps on $U$. For every $j \in \lbrace 1, \dotsc, r \rbrace$, we have $\Phi_{n}^{\boldsymbol{f}}\left( \zeta_{j} \right) = 0$, and hence $\zeta_{j}$ is a periodic point for the map $\boldsymbol{f} \colon \boldsymbol{S} \rightarrow \boldsymbol{S}$ with period a divisor of $n$. Since the points $\zeta_{j}\left( c_{0} \right) = z_{j}$, with $j \in \lbrace 1, \dotsc, r \rbrace$, are periodic for $f_{c_{0}}$ with period $n$ and belong to pairwise distinct cycles, it follows that the points $\zeta_{j}$, with $j \in \lbrace 1, \dotsc, r \rbrace$, are periodic points for $\boldsymbol{f} \colon \boldsymbol{S} \rightarrow \boldsymbol{S}$ with period $n$, which belong to pairwise distinct cycles. Consequently, there exists a unique monic polynomial $\Psi_{n} \in \boldsymbol{S}[z]$ such that \[ \Phi_{n}^{\boldsymbol{f}}(z) = \Psi_{n}(z) \prod_{j = 1}^{r} \prod_{k = 0}^{n -1} \left( z -\boldsymbol{f}^{\circ k}\left( \zeta_{j} \right) \right) \, \text{,} \] and we have \[ M_{n}^{\boldsymbol{f}}(\lambda)^{n} = \res_{z}\left( \Psi_{n}(z), \lambda -\left( \boldsymbol{f}^{\circ n} \right)^{\prime}(z) \right) \prod_{j = 1}^{r} \left( \lambda -\lambda_{j} \right)^{n} \, \text{,} \] where $\lambda_{j} = \left( \boldsymbol{f}^{\circ n} \right)^{\prime}\left( \zeta_{j} \right)$ is the multiplier of $\boldsymbol{f} \colon \boldsymbol{S} \rightarrow \boldsymbol{S}$ at $\zeta_{j}$ for $j \in \lbrace 1, \dotsc, r \rbrace$. Therefore, there exists a unique monic polynomial $N_{n} \in \boldsymbol{S}[\lambda]$ that satisfies \[ M_{n}^{\boldsymbol{f}}(\lambda) = N_{n}(\lambda) \prod_{j = 1}^{r} \left( \lambda -\lambda_{j} \right) \, \text{.} \] Now, define \[ \kappa_{n}\left( c_{0} \right) = \sum_{1 \leq j < k \leq r} \ord_{c_{0}}\left( \lambda_{j} -\lambda_{k} \right) \in \mathbb{Z}_{\geq 0} \, \text{,} \] which is positive if and only if the map $f_{c_{0}}$ has two distinct cycles with period $n$ and the same multiplier since $\lambda_{j}\left( c_{0} \right) = \left( f_{c_{0}}^{\circ n} \right)^{\prime}\left( z_{j} \right)$ for all $j \in \lbrace 1, \dotsc, r \rbrace$. Let us consider three different cases.

Assume that $f_{c_{0}}$ has neither a cycle with period a proper divisor $k$ of $n$ and multiplier a primitive $\frac{n}{k}$th root of unity nor a cycle with period $n$ and multiplier $1$. Then, by Proposition~\ref{proposition:multrootsuni}, we have \[ M_{n}^{f_{c_{0}}}(\lambda) = \prod_{j = 1}^{r} \left( \lambda -\left( f_{c_{0}}^{\circ n} \right)^{\prime}\left( z_{j} \right) \right) = \prod_{j = 1}^{r} \left( \lambda -\lambda_{j}\left( c_{0} \right) \right) \, \text{,} \] and hence $r = \frac{\nu(n)}{n}$ and $N_{n} = 1$. Therefore, we have \[ \Delta_{n} = \disc M_{n}^{\boldsymbol{f}} = \prod_{1 \leq j < k \leq \frac{\nu(n)}{n}} \left( \lambda_{j} -\lambda_{k} \right)^{2} \, \text{,} \] and hence $\ord_{c_{0}} \Delta_{n} = 2 \kappa_{n}\left( c_{0} \right)$.

Assume now that $f_{c_{0}}$ has a cycle with period a proper divisor $k$ of $n$ and multiplier a primitive $\frac{n}{k}$th root of unity. Then, by Proposition~\ref{proposition:multrootsuni}, we have \[ M_{n}^{f_{c_{0}}}(\lambda) = (\lambda -1) \prod_{j = 1}^{r} \left( \lambda -\lambda_{j}\left( c_{0} \right) \right) \, \text{,} \] and hence $r = \frac{\nu(n)}{n} -1$ and there exists a holomorphic map $\rho \colon U \rightarrow \mathbb{C}$ such that $\rho\left( c_{0} \right) = 1$ and $N_{n}(\lambda) = \lambda -\rho$. Therefore, we have \[ \Delta_{n} = \prod_{j = 1}^{\frac{\nu(n)}{n} -1} \left( \rho -\lambda_{j} \right)^{2} \prod_{1 \leq j < k \leq \frac{\nu(n)}{n} -1} \left( \lambda_{j} -\lambda_{k} \right)^{2} \, \text{,} \] and hence $\ord_{c_{0}} \Delta_{n} = 2 \kappa_{n}\left( c_{0} \right)$ since \[ \prod_{j = 1}^{\frac{\nu(n)}{n} -1} \left( \rho -\lambda_{j} \right)\left( c_{0} \right)^{2} = \prod_{j = 1}^{\frac{\nu(n)}{n} -1} \left( 1 -\left( f_{c_{0}}^{\circ n} \right)^{\prime}\left( z_{j} \right) \right)^{2} \neq 0 \, \text{.} \]

Finally, assume that $f_{c_{0}}$ has a cycle with period $n$ and multiplier $1$. Then, by Proposition~\ref{proposition:multrootsuni}, we have \[ M_{n}^{f_{c_{0}}}(\lambda) = (\lambda -1)^{2} \prod_{j = 1}^{r} \left( \lambda -\lambda_{j}\left( c_{0} \right) \right) \, \text{,} \] and hence $r = \frac{\nu(n)}{n} -2$ and there exist holomorphic maps $\sigma_{1}, \sigma_{2} \colon U \rightarrow \mathbb{C}$ such that \[ \sigma_{1}\left( c_{0} \right) = 2 \, \text{,} \quad \sigma_{2}\left( c_{0} \right) = 1 \quad \text{and} \quad N_{n}(\lambda) = \lambda^{2} -\sigma_{1} \lambda +\sigma_{2} \, \text{.} \] Therefore, we have \[ \Delta_{n} = \left( \sigma_{1}^{2} -4 \sigma_{2} \right) \prod_{j = 1}^{\frac{\nu(n)}{n} -2} \left( \lambda_{j}^{2} -\sigma_{1} \lambda_{j} +\sigma_{2} \right)^{2} \prod_{1 \leq j < k \leq \frac{\nu(n)}{n} -2} \left( \lambda_{j} -\lambda_{k} \right)^{2} \, \text{,} \] and hence \[ \ord_{c_{0}} \Delta_{n} = \ord_{c_{0}}\left( \sigma_{1}^{2} -4 \sigma_{2} \right) +2 \kappa_{n}\left( c_{0} \right) \] since we have \[ \prod_{j = 1}^{\frac{\nu(n)}{n} -2} \left( \lambda_{j}^{2} -\sigma_{1} \lambda_{j} +\sigma_{2} \right)\left( c_{0} \right)^{2} = \prod_{j = 1}^{\frac{\nu(n)}{n} -2} \left( \left( f_{c_{0}}^{\circ n} \right)^{\prime}\left( z_{j} \right) -1 \right)^{4} \neq 0 \, \text{.} \] We have $\left( \sigma_{1}^{2} -4 \sigma_{2} \right)\left( c_{0} \right) = 0$. Since the polynomial $M_{n}(c, 1) \in \mathbb{Z}[c]$ is separable by Proposition~\ref{proposition:multsep}, we have \[ \frac{\partial M_{n}}{\partial c}\left( c_{0}, 1 \right) = \left( -\sigma_{1}^{\prime}\left( c_{0} \right) +\sigma_{2}^{\prime}\left( c_{0} \right) \right) \prod_{j = 1}^{\frac{\nu(n)}{n} -2} \left( 1 -\lambda_{j}\left( c_{0} \right) \right) \neq 0 \, \text{,} \] and hence \[ \left( \sigma_{1}^{2} -4 \sigma_{2} \right)^{\prime}\left( c_{0} \right) = -4 \left( -\sigma_{1}^{\prime}\left( c_{0} \right) +\sigma_{2}^{\prime}\left( c_{0} \right) \right) \neq 0 \, \text{.} \] Therefore, we have $\ord_{c_{0}}\left( \sigma_{1}^{2} -4 \sigma_{2} \right) = 1$, and hence \[ \ord_{c_{0}} \Delta_{n} = 1 +2 \kappa_{n}\left( c_{0} \right) \, \text{.} \] This completes the proof of the proposition.
\end{proof}

\begin{example}
\label{example:delta1fact}
By Example~\ref{example:mult1} and Example~\ref{example:delta1}, we have \[ Q_{1}(c) = M_{1}(c, 1) = (-1)^{d} \left( d^{d} c^{d -1} -(d -1)^{d -1} \right) \] and \[ \Delta_{1}(c) = (-1)^{\frac{d (d -1)}{2}} d^{d (d -1)} c^{(d -1) (d -2)} \left( d^{d} c^{d -1} -(d -1)^{d -1} \right) \, \text{.} \] Therefore, we have \[ a_{1} = (-1)^{\frac{d (d +1)}{2}} \quad \text{and} \quad R_{1}(c) = d^{\frac{d (d -1)}{2}} c^{\frac{(d -1) (d -2)}{2}} \, \text{.} \]
\end{example}

Note that the polynomials $Q_{1}$ and $R_{1}$ have no common roots, which shows that there is no parameter $c_{0} \in \mathbb{C}$ for which the map $f_{c_{0}}$ has both a fixed point with multiplier $1$ and two distinct fixed points with the same multiplier. Using the software SageMath, we observe that the same is true of the polynomials $Q_{n}$ and $R_{n}$ for small values of $d$ and $n$. Thus, it seems likely that the following question has a negative answer.

\begin{question}
Does there exist an integer $n \geq 1$ such that the polynomials $Q_{n}$ and $R_{n}$ have a common root? Equivalently, does there exist an integer $n \geq 1$ and a unicritical polynomial map $f \colon \mathbb{C} \rightarrow \mathbb{C}$ of degree $d$ that has both a cycle with period $n$ and multiplier $1$ and two distinct cycles with period $n$ and the same multiplier?
\end{question}

Finally, note that, if $n \geq 1$ and $c_{0} \in \mathbb{C}$ is a parameter such that the map $f_{c_{0}}$ has a rational multiplier at each cycle with period $1$ or $n$, then $c_{0}^{d -1}$ is rational and $\Delta_{n}\left( c_{0} \right)$ is the square of a rational number, and hence $R_{n}\left( c_{0} \right) = 0$ or $a_{n} Q_{n}\left( c_{0} \right)$ is the square of a rational number. Thus, it would be interesting to determine the integers $a_{n}$, with $n \geq 1$. We proved in Example~\ref{example:delta1fact} that $a_{1} = \pm 1$. Using the software SageMath, we also obtain that $a_{n} = \pm 1$ for small values of $d$ and $n$, which suggests the following:

\begin{question}
Do we have $a_{n} = \pm 1$ for all $n \geq 1$?
\end{question}

The periodic points for $f_{-t^{d}} \colon z \mapsto z^{d} -t^{d}$ have Laurent expansions in $t^{-(d -1)}$ with coefficients in $\mathbb{Q}(\omega)$, where $\omega \in \mathbb{C}$ is a primitive $d$th root of unity, for $t$ around $\infty$ (compare~\cite[Lemma~2]{M1996}), and hence the same is true of their multipliers. Using this fact, it is possible to prove that the question above has a positive answer when $d = 2$.

\appendix

\section{A Fermat-like Diophantine equation}
\label{section:fermat}

We shall prove here the following statement, which is a crucial argument in our proof of Proposition~\ref{proposition:quadratic}:

\begin{lemma}
\label{lemma:fermat}
Assume that $x, y, z \in \mathbb{Z}$ satisfy $x^{3} +y^{3} = 4 z^{3}$. Then $z = 0$.
\end{lemma}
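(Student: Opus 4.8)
The plan is to run Fermat's classical method of infinite descent, carried out in the ring $A = \mathbb{Z}[\omega]$ with $\omega = e^{2 i \pi / 3}$, which is a principal ideal domain with unit group $\{ \pm 1, \pm \omega, \pm \omega^{2} \}$ and in which the rational prime $3$ ramifies. Suppose, for contradiction, that the equation admits a solution with $z \neq 0$, and choose one for which $\lvert z \rvert$ is minimal; by minimality we may assume $\gcd(x, y, z) = 1$. A parity check shows $x \equiv y \pmod 2$ (since $x^{3} + y^{3} \equiv x + y$), and if $x, y$ were both even then $8 \mid 4 z^{3}$ would force $z$ even, contradicting primitivity; hence $x$ and $y$ are both odd, and consequently $\gcd(x, y) = 1$.

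Next I would perform the algebraic reduction. Put $a = (x + y)/2$ and $b = (x - y)/2$, which are integers; then $x^{3} + y^{3} = 2 a (a^{2} + 3 b^{2})$, so $a (a^{2} + 3 b^{2}) = 2 z^{3}$, with $\gcd(a, b) = 1$ and $a, b$ of opposite parity. As $a^{2} + 3 b^{2}$ is then odd, $a$ must be even; writing $a = 2 a'$ gives $a' \bigl( (2 a')^{2} + 3 b^{2} \bigr) = z^{3}$, where $b$ is odd and $\gcd(2 a', b) = 1$. I would then split into two cases according to whether $3 \nmid a'$ or $3 \mid a'$. In the first case the factors $a'$ and $(2 a')^{2} + 3 b^{2}$ are coprime, so each is, up to sign, a cube; factoring $(2 a')^{2} + 3 b^{2} = (2 a' + b \sqrt{-3})(2 a' - b \sqrt{-3})$ in $A$, checking that the two conjugate factors are coprime there, and using that units modulo cubes form a group of order $3$, one recovers the classical parametrization (of integer solutions of $P^{2} + 3 Q^{2} = \square^{3}$): $2 a' = s (s - 3 t)(s + 3 t)$ and $b = 3 t (s - t)(s + t)$ with $s, t$ coprime of opposite parity, where here $s$ is even, say $s = 2 s'$, whence $a' = s'(2 s' - 3 t)(2 s' + 3 t)$. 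A short computation shows the three factors $s'$, $2 s' - 3 t$, $2 s' + 3 t$ are pairwise coprime, so each is a cube up to sign, say $s' = \pm u^{3}$, $2 s' - 3 t = \pm v^{3}$, $2 s' + 3 t = \pm w^{3}$; adding the last two and replacing $u, v, w$ by suitable sign changes yields $v^{3} + w^{3} = 4 u^{3}$, a new solution. In the second case, writing $a' = 3 a''$ and extracting the resulting powers of $3$ from $z$ reduces the equation to one of the form $n \bigl( b^{2} + 3 (6 n)^{2} \bigr) = m^{3}$ with $\gcd(n, b) = 1$ and $3 \nmid b$, and the same parametrization-and-splitting argument again produces a solution of $X^{3} + Y^{3} = 4 Z^{3}$. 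In both cases a crude size estimate — the cofactor $(2 a')^{2} + 3 b^{2}$, respectively its analogue, exceeds $1$ whenever the solution is nontrivial — shows that the new last coordinate satisfies $0 < \lvert u \rvert < \lvert z \rvert$, contradicting the minimality of $\lvert z \rvert$; therefore $z = 0$.

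The main obstacle is the bookkeeping in the descent: one must carefully verify coprimality of the various factors both in $\mathbb{Z}$ and in $A = \mathbb{Z}[\omega]$, track the exact powers of $2$ and of the prime above $3$ at each stage, and resolve the unit ambiguity coming from the three unit classes modulo cubes, so as to be sure the descent genuinely returns to the curve $X^{3} + Y^{3} = 4 Z^{3}$ rather than to some twist of it; the two cases $3 \nmid a'$ and $3 \mid a'$ require slightly different normalizations and both must be pushed through. Alternatively, one could invoke the known parametrization lemma for $P^{2} + 3 Q^{2} = \square^{3}$ as a black box, which packages the $\mathbb{Z}[\omega]$-arithmetic and shortens the write-up considerably.
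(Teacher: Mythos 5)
Your overall plan---infinite descent in $\mathbb{Z}[\omega]$---matches the paper's, but the route is different, and the difference is exactly where a gap sits. The paper proves the stronger statement that $x^{3} + u y^{3} = 4 v z^{3}$ has no solution with $z \neq 0$ for $x, y, z \in \mathbb{Z}[\omega]$ and units $u, v$, descending on $\ord_{\lambda}(z)$ with $\lambda = 1 - \omega$: it factors $x^{3} + u y^{3}$ directly into three $\lambda$-divisible linear factors, obtains $a b c = 4 v d^{3}$ with $a + b + c = 0$, and the new solution is again of the twisted form $X^{3} + U Y^{3} = 4 V Z^{3}$, so units never need to be pinned down. You instead stay over $\mathbb{Z}$, reduce to $a' \bigl( (2a')^{2} + 3 b^{2} \bigr) = z^{3}$, and then must determine the unit in $2a' + b\sqrt{-3} = u \alpha^{3}$ before the parametrization you quote can be used.

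That is where the gap is. The parametrization $P = s(s - 3t)(s + 3t)$, $Q = 3t(s - t)(s + t)$ has $3 \mid Q$ built in, and it is only correct when the unit is $\pm 1$; if $3 \nmid Q$ the unit is necessarily $\omega^{\pm 1}$ and the displayed formulas are simply false. (Reducing $2P = u\alpha^3 + \bar{u}\bar{\alpha}^3$ modulo $\lambda^3$, where every cube prime to $\lambda$ is $\equiv \pm 1$, gives $2P \equiv \pm 2 \pmod{9}$ when $u = \pm 1$ but $2P \equiv \mp 1 \pmod{9}$ when $u = \omega^{\pm 1}$, so $P \bmod 9$ detects the unit.) In your Case 1 you have $P = 2a'$ with $3 \nmid P$ and $Q = b$, but nothing you have written shows $3 \mid b$. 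The missing step is a congruence argument: if $3 \mid x$, then $x^{3} + y^{3} \equiv \pm 1 \pmod 9$ whereas $4 z^{3} \equiv 0, \pm 4 \pmod 9$, impossible; so $3 \nmid x$ and $3 \nmid y$, hence $x \equiv \pm 1$, $y \equiv \pm 1 \pmod 3$ and exactly one of $a = (x+y)/2$, $b = (x-y)/2$ is divisible by $3$. In Case 1 ($3 \nmid a$) it must be $b$, and then your parametrization and descent do go through. You correctly flag the unit ambiguity as the main obstacle, but as written that step is asserted rather than proved, and it is the only nonroutine point of the argument; the paper's device of carrying unit coefficients through the descent is exactly designed to make that step disappear.
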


Our proof of Lemma~\ref{lemma:fermat} is adapted from the proof of Fermat's last theorem for exponent $3$ presented in~\cite{H2011} and uses the principle of infinite descent.

Define $\mathbb{A} = \mathbb{Z}[j]$ to be the ring of Eisenstein integers, where $j = \exp\left( \frac{2 \pi i}{3} \right) \in \mathbb{C}$. The ring $\mathbb{A}$ is a Euclidean domain and its group of units \[ \mathbb{A}^{\times} = \left\lbrace \pm 1, \pm j, \pm j^{2} \right\rbrace \] consists of the $6$th roots of unity.

First, observe that $\lambda = 1 -j \in \mathbb{A}$ is irreducible.

\begin{claim}
\label{claim:alambda}
The quotient ring $\mathbb{A}/\lambda \mathbb{A}$ consists of the residue classes of $-1$, $0$ and $1$ (see Figure~\ref{figure:alambda}).
\end{claim}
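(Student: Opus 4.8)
The plan is to identify $\mathbb{A}/\lambda\mathbb{A}$ explicitly by reducing in two stages: first from $\mathbb{A}$ down to $\mathbb{Z}$, then from $\mathbb{Z}$ down to $\mathbb{Z}/3\mathbb{Z}$. First I would note that $j \equiv 1 \pmod{\lambda}$, since $\lambda = 1 - j$; hence every Eisenstein integer $a + b j$ with $a, b \in \mathbb{Z}$ is congruent modulo $\lambda$ to the rational integer $a + b$, so the natural ring homomorphism $\mathbb{Z} \to \mathbb{A}/\lambda\mathbb{A}$ is surjective.

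Next I would show this homomorphism factors through $\mathbb{Z}/3\mathbb{Z}$. Using the relation $1 + j + j^{2} = 0$ one computes $\lambda^{2} = (1 - j)^{2} = 1 - 2 j + j^{2} = -3 j$, so that $3 = -j^{-1} \lambda^{2} \in \lambda\mathbb{A}$. Consequently every rational integer is congruent modulo $\lambda$ to one of $-1$, $0$, $1$, which shows that $\mathbb{A}/\lambda\mathbb{A}$ consists of at most these three residue classes.

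Finally I would check that the three classes are pairwise distinct. Since $\lambda$ is irreducible, it is not a unit, so it does not divide $1$; thus the class of $0$ differs from those of $\pm 1$, and the classes of $1$ and $-1$ differ unless $\lambda \mid 2$. But $\lambda \mid 2$ together with $\lambda \mid 3$ (established above) would force $\lambda \mid 1$, a contradiction; alternatively, the norm $N(\lambda) = (1 - j)(1 - j^{2}) = 3$ does not divide $N(2) = 4$. Therefore $\mathbb{A}/\lambda\mathbb{A}$ has exactly three elements, namely the residue classes of $-1$, $0$ and $1$, as claimed.

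There is no real obstacle here; the only points needing care are the bookkeeping with $1 + j + j^{2} = 0$ that yields $\lambda^{2} = -3 j$, and the verification that the three listed classes are genuinely distinct rather than merely a spanning set. (One could also argue more abstractly that, $\lambda$ being irreducible, $\mathbb{A}/\lambda\mathbb{A}$ is a field with $N(\lambda) = 3$ elements, hence isomorphic to $\mathbb{F}_{3}$, but the elementary computation above fits the tone of this appendix better.)
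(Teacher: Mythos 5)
Your argument is essentially the paper's own: reduce from $\mathbb{A}$ to $\mathbb{Z}$ via $j \equiv 1 \pmod{\lambda}$, then reduce modulo $3 = -j^{2}\lambda^{2} \in \lambda\mathbb{A}$. The only addition is your explicit check that the three residue classes are pairwise distinct (via the norm $N(\lambda)=3$, or $\lambda \nmid 2$), which the paper leaves implicit; this is a harmless and slightly more complete version of the same proof.
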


\begin{figure}
\framebox{\begin{tikzpicture}[scale=1.25]
\clip (-4,-3) rectangle (4,3);
\fill[Ivory] (-5,-4) rectangle (5,4);
\draw[SlateBlue,line width=0.5pt] (0,0) circle (1);
\foreach \x in {-12,-9,...,12}
{\draw[Coral,line width=1pt] (\x,0) -- +(-30:-9) -- +(-30:9) -- +(30:-9) -- +(30:9);}
\foreach \x in {-8,-7,...,8}
{\foreach \y in {-5,-4,...,5}
{\path (\x,0) ++(120:\y) node[normal=Fuchsia]{};}}
\foreach \x in {-12,-9,...,12}
{\foreach \y in {-5,-4,...,5}
{\path (\x,0) ++(\y,0) ++(120:-\y) node[special={FireBrick}{Fuchsia}]{};}}
\foreach \t in {-180,-120,...,120}
{\path (\t:1) node[special={Aqua}{Fuchsia}]{};}
\path (0,0) node[below]{$0$};
\path (0:1) node[right]{$1$};
\path (120:1) node[above left]{$j$};
\path (-120:1) node[below left]{$j^{2}$};
\path (1,0) ++(120:-1) node[below]{$\lambda$};
\end{tikzpicture}}
\caption{The ring of Eisenstein integers, its units and its ideal $\lambda \mathbb{A}$.}
\label{figure:alambda}
\end{figure}
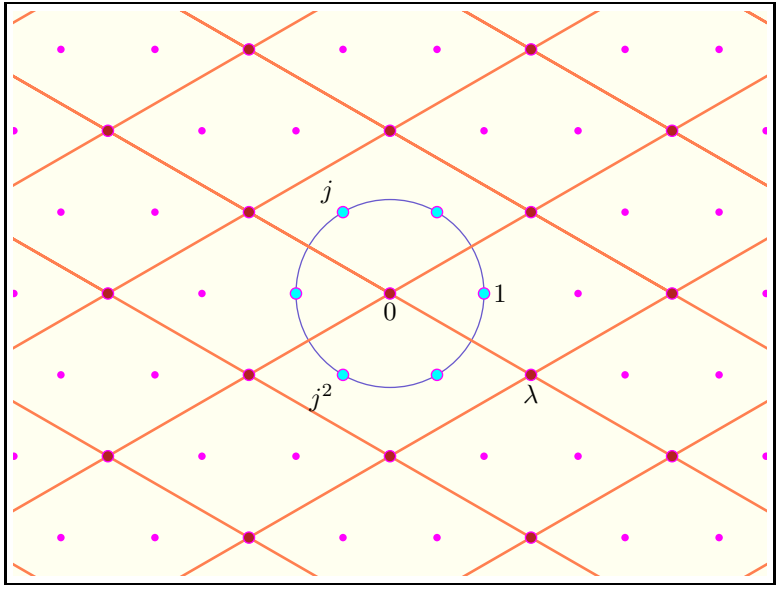

\begin{proof}[Proof of Claim~\ref{claim:alambda}]
Every element of $\mathbb{A}$ is congruent to an element of $\mathbb{Z}$ modulo $\lambda$ since $j \equiv 1 \pmod{\lambda}$ and every element of $\mathbb{Z}$ is congruent to either $-1$, $0$ or $1$ modulo $3 = -j^{2} \lambda^{2}$.
\end{proof}

\begin{claim}
\label{claim:alambda3}
The ring $\mathbb{A}/\lambda^{3} \mathbb{A}$ contains exactly $3$ cubes, namely the residue classes of $-1$, $0$ and $1$ (see Figure~\ref{figure:alambda3}). More precisely, for every $a \in \mathbb{A}$, 
\begin{itemize}
\item either $a \equiv -1 \pmod{\lambda}$ and $a^{3} \equiv -1 \pmod{\lambda^{3}}$,
\item or $a \equiv 0 \pmod{\lambda}$ and $a^{3} \equiv 0 \pmod{\lambda^{3}}$,
\item or $a \equiv 1 \pmod{\lambda}$ and $a^{3} \equiv 1 \pmod{\lambda^{3}}$.
\end{itemize}
\end{claim}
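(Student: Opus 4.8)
The plan is to combine Claim~\ref{claim:alambda} with a single binomial expansion. By Claim~\ref{claim:alambda}, every $a \in \mathbb{A}$ is congruent modulo $\lambda$ to exactly one $\varepsilon \in \lbrace -1, 0, 1 \rbrace$, so I may write $a = \varepsilon + \lambda b$ for some $b \in \mathbb{A}$. The one extra arithmetic input I need is that $\lambda^{2}$ is an associate of $3$ in $\mathbb{A}$: since $\lambda = 1 - j$ and $1 + j + j^{2} = 0$, one computes $\lambda^{2} = -3 j$, hence $3 = -j^{2} \lambda^{2}$; in particular $\lambda$ divides $3$.

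Expanding, $a^{3} = \varepsilon^{3} + 3 \varepsilon^{2} \lambda b + 3 \varepsilon \lambda^{2} b^{2} + \lambda^{3} b^{3}$. Substituting $3 = -j^{2} \lambda^{2}$, the second term becomes $-j^{2} \varepsilon^{2} b \, \lambda^{3}$ and the third becomes $-j^{2} \varepsilon b^{2} \, \lambda^{4}$, so both are divisible by $\lambda^{3}$, as is $\lambda^{3} b^{3}$. Since $\varepsilon^{3} = \varepsilon$ for $\varepsilon \in \lbrace -1, 0, 1 \rbrace$, this gives $a^{3} \equiv \varepsilon \pmod{\lambda^{3}}$ in each of the three cases, which is precisely the displayed trichotomy. (When $\varepsilon = 0$ one may of course argue even more directly, since then $a = \lambda b$ and $a^{3} = \lambda^{3} b^{3}$.)

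It remains to deduce that $\mathbb{A}/\lambda^{3} \mathbb{A}$ contains exactly the three cubes $-1$, $0$, $1$: each of these is visibly a cube, the trichotomy shows that every cube is congruent to one of them, and $-1$, $0$, $1$ are pairwise incongruent modulo $\lambda^{3}$, since otherwise $\lambda$ would divide one of $1$, $2$ and hence (using $\lambda \mid 3$) would divide $1$, contradicting the irreducibility of $\lambda$. I do not anticipate a genuine obstacle here: the entire content is the congruence $3 \equiv 0 \pmod{\lambda^{2}}$ together with one binomial expansion, and the only point requiring a moment's care is checking that the term $3 \varepsilon^{2} \lambda b$, which carries only a single spare factor of $\lambda$, nonetheless becomes divisible by $\lambda^{3}$ once the two factors of $\lambda$ concealed in $3$ are taken into account.
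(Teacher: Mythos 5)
Your proof is correct, and it reaches the conclusion by a genuinely different route than the paper. You write $a = \varepsilon + \lambda b$ with $\varepsilon \in \lbrace -1, 0, 1 \rbrace$ and expand $a^{3}$ by the binomial theorem, then absorb the cross terms into $\lambda^{3}\mathbb{A}$ by invoking $3 = -j^{2}\lambda^{2}$; the paper instead handles the nontrivial case $a \equiv 1 \pmod{\lambda}$ by factoring $a^{3} - 1 = (a-1)(a-j)(a-j^{2})$ over the cube roots of unity, substituting $a = 1 + \lambda b$, and reading off a visible factor of $\lambda$ in each of the three linear factors to get $a^{3} - 1 = \lambda^{3}\,b(b+1)(b+1+j)$, with the case $a \equiv -1$ then following from $a^{3} = -(-a)^{3}$. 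Your expansion is more mechanical and leans directly on the arithmetic fact $\lambda^{2} \mid 3$; the paper's factorization makes the divisibility by $\lambda^{3}$ appear structurally, one factor of $\lambda$ per root of unity, without needing to name the relation $3 = -j^{2}\lambda^{2}$ at this point (though it was used to prove Claim~\ref{claim:alambda}). Both are short and complete, and your closing argument that $-1$, $0$, $1$ are pairwise incongruent modulo $\lambda^{3}$ (via $\lambda \nmid 2$ since $\lambda \mid 3$) correctly finishes the count of cubes, which the paper leaves implicit.
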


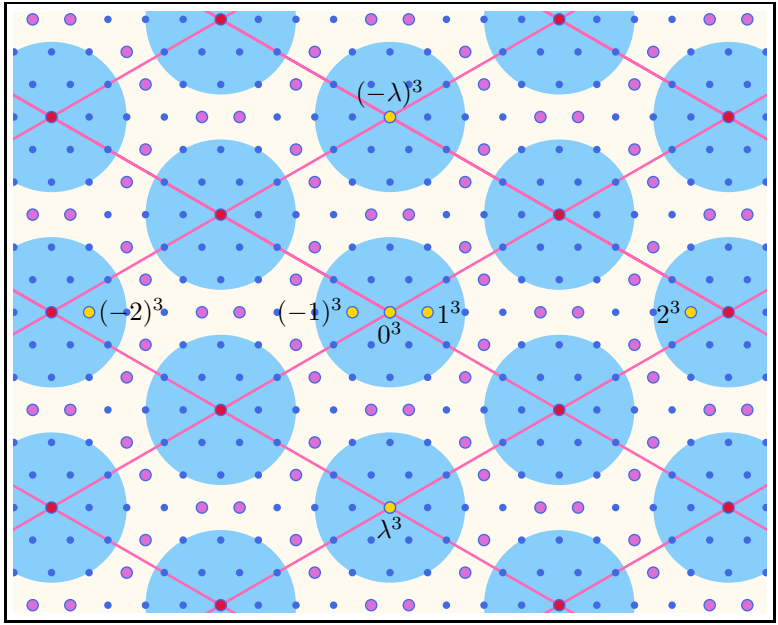
\begin{figure}
\framebox{\begin{tikzpicture}[scale=0.5]
\clip (-10,-8) rectangle (10,8);
\fill[FloralWhite] (-11,-9) rectangle (11,9);
\foreach \x in {-27,-18,...,27}
{\foreach \y in {-12,-9,...,12}
{\fill[LightSkyBlue] (\x,0) ++(\y,0) ++(120:-\y) circle (2);}}
\foreach \x in {-27,-18,...,27}
{\draw[HotPink,line width=1pt] (\x,0) -- +(-30:-19) -- +(-30:19) -- +(30:-19) -- +(30:19);}
\foreach \x in {-17,-16,...,17}
{\foreach \y in {-11,-10,...,11}
{\path (\x,0) ++(120:\y) node[normal=RoyalBlue]{};}}
\foreach \x in {-27,-18,...,27}
{\foreach \y in {-12,-9,...,12}
{\path (\x,0) ++(\y,0) ++(120:-\y) node[special={Crimson}{RoyalBlue}]{};
\foreach \t in {-180,-120,...,120}
{\path (\x,0) ++(\y,0) ++(120:-\y) ++(\t:4) node[special={Orchid}{RoyalBlue}]{};}}}
\path (-8,0) node[special={Gold}{RoyalBlue}]{} node[right]{$(-2)^{3}$};
\path (-1,0) node[special={Gold}{RoyalBlue}]{} node[left]{$(-1)^{3}$};
\path (0,0) node[special={Gold}{RoyalBlue}]{} node[below]{$0^{3}$};
\path (1,0) node[special={Gold}{RoyalBlue}]{} node[right]{$1^{3}$};
\path (8,0) node[special={Gold}{RoyalBlue}]{} node[left]{$2^{3}$};
\path (-3,0) ++(120:-6) node[special={Gold}{RoyalBlue}]{} node[below]{$\lambda^{3}$};
\path (3,0) ++(120:6) node[special={Gold}{RoyalBlue}]{} node[above]{$(-\lambda)^{3}$};
\end{tikzpicture}}
\caption{The ring of Eisenstein integers, its cubes and its ideal $\lambda^{3} \mathbb{A}$. The elements of the form $x^{3} +u y^{3}$, with $x, y \in \mathbb{A}$ and $u \in \mathbb{A}^{\times}$, lie in blue disks. The set $4 \mathbb{A}^{\times} +\lambda^{3} \mathbb{A}$ is indicated by purple dots.}
\label{figure:alambda3}
\end{figure}

\begin{proof}[Proof of Claim~\ref{claim:alambda3}]
If $\lambda$ divides $a$, then $\lambda^{3}$ divides $a^{3}$. If $a \equiv 1 \pmod{\lambda}$, then there exists $b \in \mathbb{A}$ such that $a = 1 +\lambda b$, and we have \[ a^{3} -1 = (a -1) (a -j) \left( a -j^{2} \right) = \lambda^{3} b (b +1) (b +1 +j) \, \text{.} \] If $a \equiv -1 \pmod{\lambda}$, then $a^{3} \equiv -1 \pmod{\lambda^{3}}$ since $a^{3} = -(-a)^{3}$.
\end{proof}

Finally, Lemma~\ref{lemma:fermat} follows immediately from the following more general result:

\begin{lemma}
\label{lemma:fermatgen}
Suppose that $x, y, z \in \mathbb{A}$ and $u, v \in \mathbb{A}^{\times}$ are such that $x^{3} +u y^{3} = 4 v z^{3}$. Then $z = 0$.
\end{lemma}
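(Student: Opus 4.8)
The plan is to prove the more general Lemma~\ref{lemma:fermatgen} by infinite descent in the ring $\mathbb{A} = \mathbb{Z}[j]$, following the classical proof of Fermat's last theorem for exponent~$3$ but carrying the factor $4 = 2^{2}$ along throughout. Write $N \colon \mathbb{A} \to \mathbb{Z}_{\geq 0}$ for the norm, so that $N(a + bj) = a^{2} - ab + b^{2}$, and recall that $\mathbb{A}$ is a principal ideal domain, that $\lambda = 1 - j$ has $N(\lambda) = 3$ while $N(2) = 4$~-- so $\lambda \nmid 2$ and hence $\lambda \nmid 4v$~-- and that the units of $\mathbb{A}$ that are cubes are exactly $\pm 1$. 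Suppose for contradiction that $x^{3} + uy^{3} = 4vz^{3}$ has a solution with $z \neq 0$, and choose one with $\lvert N(z) \rvert$ minimal; dividing $x$, $y$ and $z$ by any common nonunit factor would decrease $\lvert N(z) \rvert$, so $x$, $y$ and $z$ are pairwise coprime.

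First I would show that $\lambda \mid z$. Reducing modulo $\lambda^{3}$ and invoking Claim~\ref{claim:alambda3}, we have $x^{3} \equiv \varepsilon_{1}$, $y^{3} \equiv \varepsilon_{2}$, $z^{3} \equiv \varepsilon_{3} \pmod{\lambda^{3}}$ with $\varepsilon_{i} \in \lbrace -1, 0, 1 \rbrace$, where $\varepsilon_{i} = 0$ if and only if $\lambda$ divides the corresponding variable. By pairwise coprimality and the equation, at most one of $x$, $y$, $z$ is divisible by $\lambda$. If none is, then multiplying $\varepsilon_{1} + u\varepsilon_{2} \equiv 4v\varepsilon_{3} \pmod{\lambda^{3}}$ by $\varepsilon_{1} \in \lbrace -1, 1 \rbrace$ gives $1 + u' \equiv 4v' \pmod{\lambda^{3}}$ for some units $u'$, $v'$, which is impossible: an explicit check with the six units of $\mathbb{A}/\lambda^{3}\mathbb{A}$ shows that $1 + \mathbb{A}^{\times}$ and $4\mathbb{A}^{\times}$ are disjoint in $\mathbb{A}/\lambda^{3}\mathbb{A}$ (this is the disjointness illustrated by the purple dots of Figure~\ref{figure:alambda3}). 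If $\lambda$ divided $x$ or $y$ but not $z$, one would similarly obtain a unit congruent to $4$ modulo $\lambda^{3}$, which likewise does not exist. Hence $\lambda \mid z$, and then $\lambda \nmid x$ and $\lambda \nmid y$; moreover $\lambda^{3} \mid 4vz^{3}$ now forces $\varepsilon_{1} + u\varepsilon_{2} \equiv 0 \pmod{\lambda^{3}}$ with $\varepsilon_{1}, \varepsilon_{2} \in \lbrace -1, 1 \rbrace$, so $u \equiv \pm 1 \pmod{\lambda^{3}}$ and therefore $u = \pm 1$.

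Now comes the descent. Write $z = \lambda^{m} w$ with $m \geq 1$ and $\lambda \nmid w$. Since $u = \pm 1$ we have $u^{3} = u$, so replacing $y$ by $uy$~-- still coprime to $x$ and not divisible by $\lambda$~-- turns the equation into $x^{3} + y^{3} = 4v\lambda^{3m}w^{3}$, and we may factor $x^{3} + y^{3} = (x + y)(x + jy)(x + j^{2}y)$. The three factors are congruent modulo $\lambda$ and their pairwise differences have $\lambda$-adic valuation exactly $1$, while their product has valuation $3m$; hence each factor is divisible by $\lambda$, and after a cyclic permutation~-- replacing $y$ by $jy$ or $j^{2}y$~-- their valuations are $3m - 2$, $1$, $1$. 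Dividing each factor by its power of $\lambda$ produces pairwise coprime elements $A$, $B$, $C$, all coprime to $\lambda$, with $ABC = 4vw^{3}$. As $2$ is a prime of $\mathbb{A}$ dividing none of the others, it divides exactly one of $A$, $B$, $C$ to an exponent congruent to $2$ modulo $3$; since $\mathbb{A}$ is a unique factorization domain, that factor equals $4$ times a unit times a cube and the other two are each a unit times a cube. Substituting into the relation $(x + y) + j(x + jy) + j^{2}(x + j^{2}y) = 0$~-- equivalently $\lambda^{3(m-1)}A + jB + j^{2}C = 0$~-- and dividing by the unit coefficient of the cube not carrying the factor $4$, we obtain a new solution $x_{1}^{3} + u_{1}y_{1}^{3} = 4v_{1}z_{1}^{3}$ of the same shape, with $z_{1}$ equal to $\lambda^{m-1}$ times a unit times a cube root of one of $A$, $B$, $C$. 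Here $z_{1} \neq 0$~-- otherwise $x + j^{i}y = 0$ for some $i$, whence $x^{3} = -y^{3}$ and $z = 0$~-- and $\lvert N(z_{1}) \rvert \leq 3^{m-1}\lvert N(w) \rvert < 3^{m}\lvert N(w) \rvert = \lvert N(z) \rvert$, contradicting the minimality of $\lvert N(z) \rvert$. This closes the descent.

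The main obstacle, I expect, is the bookkeeping in this last step. One has to track the inert prime $2$: it occurs in $ABC$ to an exponent that is $2$ modulo $3$, which is precisely why the new right-hand side still carries a factor $4$ rather than a perfect cube, so that the descent stays within the same family of equations; and one has to shuffle the six units $\pm 1, \pm j, \pm j^{2}$ to land back in the exact shape $\star^{3} + (\mathrm{unit})\star^{3} = 4(\mathrm{unit})\star^{3}$. The modular computation in the first step~-- that no unit of $\mathbb{A}$ is congruent to $4$ modulo $\lambda^{3}$ and that $1 + \mathbb{A}^{\times}$ avoids $4\mathbb{A}^{\times}$ there~-- is routine once Claims~\ref{claim:alambda} and~\ref{claim:alambda3} are available.
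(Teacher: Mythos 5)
Your proof is correct and follows essentially the same infinite-descent strategy as the paper's: reduction modulo $\lambda^{3}$ forces $\lambda \mid z$ and $u = \pm 1$, the factorization of $x^{3}+y^{3}$ into three pairwise coprime pieces combined with unique factorization in $\mathbb{A}$ (tracking the inert prime $2$) yields a smaller solution of the same shape, and minimality gives the contradiction. The differences~-- descending on $\lvert N(z) \rvert$ rather than on $\ord_{\lambda}(z)$, verifying the modular obstruction by an explicit check on the six units of $\mathbb{A}/\lambda^{3}\mathbb{A}$ rather than the paper's distance estimate, and dividing the three factors by their exact $\lambda$-powers before invoking unique factorization~-- are cosmetic.
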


\begin{proof}[Proof of Lemma~\ref{lemma:fermatgen}]
To obtain a contradiction, suppose that there exist $x, y, z \in \mathbb{A}$, with $z \neq 0$, that are relatively prime and $u, v \in \mathbb{A}^{\times}$ such that $x^{3} +u y^{3} = 4 v z^{3}$ and the valuation $\ord_{\lambda}(z)$ is minimal. Then $x$, $y$ and $z$ are pairwise relatively prime. Note that, by Claim~\ref{claim:alambda3}, $x^{3} +u y^{3}$ is at distance at most $2$ from $\lambda^{3} \mathbb{A}$, while the distance between $4 \mathbb{A}^{\times}$ and $\lambda^{3} \mathbb{A}$ equals $\sqrt{7} > 2$. Therefore, we have $z \equiv 0 \pmod{\lambda}$, $u = \pm 1$ and $x \equiv -u y \pmod{\lambda}$. It follows that \[ x +u y \equiv j x +j^{2} u y \equiv j^{2} x +j u y \equiv 0 \pmod{\lambda} \, \text{,} \] and hence there exist $a, b, c, d \in \mathbb{A}$ such that \[ x +u y = \lambda a \, \text{,} \quad j x +j^{2} u y = \lambda b \, \text{,} \quad j^{2} x +j u y = \lambda c \quad \text{and} \quad z = \lambda d \, \text{.} \] We have $a +b +c = 0$ and $a b c = 4 v d^{3}$. Moreover, since \[ x = -j a +j^{2} b = a -j^{2} c = -b +j c \quad \text{and} \quad u y = a -j^{2} b = -j a +j^{2} c = j b -c \, \text{,} \] $a$, $b$ and $c$ are pairwise relatively prime. Therefore, there exist $X, Y, Z \in \mathbb{A}$, which are necessarily pairwise relatively prime, $u_{X}, u_{Y}, u_{Z} \in \mathbb{A}^{\times}$ and a permutation $\sigma$ of $\lbrace a, b, c \rbrace$ that satisfy \[ \sigma(a) = u_{X} X^{3} \, \text{,} \quad \sigma(b) = u_{Y} Y^{3} \quad \text{and} \quad \sigma(c) = 4 u_{Z} Z^{3} \, \text{.} \] We have \[ X^{3} +U Y^{3} = 4 V Z^{3} \, \text{,} \] where $U = u_{X}^{-1} u_{Y} \in \mathbb{A}^{\times}$ and $V = -u_{X}^{-1} u_{Z} \in \mathbb{A}^{\times}$, and \[ 3 \ord_{\lambda}(Z) = \ord_{\lambda}\left( \sigma(c) \right) = \ord_{\lambda}(a b c) = \ord_{\lambda}\left( d^{3} \right) = 3 \ord_{\lambda}(z) -3 \, \text{.} \] This contradicts the minimality of $\ord_{\lambda}(z)$, and thus the lemma is proved.
\end{proof}

\newcommand{\etalchar}[1]{$^{#1}$}
\providecommand{\bysame}{\leavevmode\hbox to3em{\hrulefill}\thinspace}
\providecommand{\MR}{\relax\ifhmode\unskip\space\fi MR }
% \MRhref is called by the amsart/book/proc definition of \MR.
\providecommand{\MRhref}[2]{%
\href{http://www.ams.org/mathscinet-getitem?mr=#1}{#2}
}
\providecommand{\href}[2]{#2}


\begin{thebibliography}{BIJ{\etalchar{+}}19}

\bibitem[BIJ{\etalchar{+}}19]{BIJMST2019}
Robert Benedetto, Patrick Ingram, Rafe Jones, Michelle Manes, Joseph~H.
Silverman, and Thomas~J. Tucker, \emph{Current trends and open problems in
arithmetic dynamics}, Bull. Amer. Math. Soc. (N.S.) \textbf{56} (2019),
no.~4, 611--685. \MR{4007163}

\bibitem[BL14]{BL2014}
Xavier Buff and Tan Lei, \emph{The quadratic dynatomic curves are smooth and
irreducible}, Frontiers in complex dynamics, Princeton Math. Ser., vol.~51,
Princeton Univ. Press, Princeton, NJ, 2014, pp.~49--72. \MR{3289906}

\bibitem[Bou92]{B1992}
Thierry Bousch, \emph{Sur quelques probl\`{e}mes de dynamique holomorphe},
1992, Thesis (Ph.D.)--Universit\'{e} de Paris-Sud.

\bibitem[Bou14]{B2014}
\bysame, \emph{Les racines des composantes hyperboliques de {\it {m}} sont des
quarts d'entiers alg\'{e}briques}, Frontiers in complex dynamics, Princeton
Math. Ser., vol.~51, Princeton Univ. Press, Princeton, NJ, 2014, pp.~25--26.
\MR{3289904}

\bibitem[DH85]{DH1985}
A.~Douady and J.~H. Hubbard, \emph{\'{E}tude dynamique des polyn\^{o}mes
complexes. {P}artie {II}}, Publications Math\'{e}matiques d'Orsay
[Mathematical Publications of Orsay], vol.~85, Universit\'{e} de Paris-Sud,
D\'{e}partement de Math\'{e}matiques, Orsay, 1985, With the collaboration of
P. Lavaurs, Tan Lei and P. Sentenac. \MR{812271}

\bibitem[EvS11]{EvS2011}
Alexandre Eremenko and Sebastian van Strien, \emph{Rational maps with real
multipliers}, Trans. Amer. Math. Soc. \textbf{363} (2011), no.~12,
6453--6463. \MR{2833563}

\bibitem[Hin11]{H2011}
Marc Hindry, \emph{Arithmetics}, Universitext, Springer, London, 2011,
Translated from the 2008 French original. \MR{2816902}

\bibitem[Mil00]{M2000}
John Milnor, \emph{Periodic orbits, externals rays and the {M}andelbrot set: an
expository account}, no. 261, 2000, G\'{e}om\'{e}trie complexe et
syst\`{e}mes dynamiques (Orsay, 1995), pp.~xiii, 277--333. \MR{1755445}

\bibitem[Mil06]{M2006}
\bysame, \emph{On {L}att\`{e}s maps}, Dynamics on the {R}iemann sphere, Eur.
Math. Soc., Z\"{u}rich, 2006, pp.~9--43. \MR{2348953}

\bibitem[Mil14]{M2014}
\bysame, \emph{Arithmetic of unicritical polynomial maps}, Frontiers in complex
dynamics, Princeton Math. Ser., vol.~51, Princeton Univ. Press, Princeton,
NJ, 2014, pp.~15--24. \MR{3289903}

\bibitem[Mor96]{M1996}
Patrick Morton, \emph{On certain algebraic curves related to polynomial maps},
Compositio Math. \textbf{103} (1996), no.~3, 319--350. \MR{1414593}

\bibitem[MP94]{MP1994}
Patrick Morton and Pratiksha Patel, \emph{The {G}alois theory of periodic
points of polynomial maps}, Proc. London Math. Soc. (3) \textbf{68} (1994),
no.~2, 225--263. \MR{1253503}

\bibitem[MS95]{MS1995}
Patrick Morton and Joseph~H. Silverman, \emph{Periodic points, multiplicities,
and dynamical units}, J. Reine Angew. Math. \textbf{461} (1995), 81--122.
\MR{1324210}

\bibitem[MV95]{MV1995}
Patrick Morton and Franco Vivaldi, \emph{Bifurcations and discriminants for
polynomial maps}, Nonlinearity \textbf{8} (1995), no.~4, 571--584.
\MR{1342504}

\bibitem[Sch17]{S2017}
Dierk Schleicher, \emph{Internal addresses of the {M}andelbrot set and {G}alois
groups of polynomials}, Arnold Math. J. \textbf{3} (2017), no.~1, 1--35.
\MR{3646529}

\bibitem[Sil07]{S2007}
Joseph~H. Silverman, \emph{The arithmetic of dynamical systems}, Graduate Texts
in Mathematics, vol. 241, Springer, New York, 2007. \MR{2316407}

\end{thebibliography}
\end{document}